\documentclass[11pt]{amsart}
\usepackage{amssymb, amsmath, amsthm, mathrsfs, amsopn}
\usepackage{amsrefs}

\usepackage[a4paper, centering]{geometry}
\geometry{text={15cm, 22cm}}

\def\DM{{\mathcal{DM}^{\infty}}}

\newcommand*{\DMloc}[1][\Omega]{\mathcal{DM}^{\infty}_{{\rm loc}}{(#1)}}
\newcommand*{\BVLloc}[1][\Omega]{BV_{{\rm loc}}(#1)\cap L^{\infty}_{{\rm loc}}{(#1)}}
\DeclareMathOperator{\Tr}{Tr}
\newcommand*{\Trace}[3][\pm]{\Tr^{#1}(#2, #3)}
\newcommand*{\Trp}[2]{\Trace[+]{#1}{#2}}
\newcommand*{\Trm}[2]{\Trace[-]{#1}{#2}}
\newcommand*{\chiut}[1][]{\chi^{#1}_{\{u > t\}}}
\newcommand*{\jump}[1]{\Theta_{#1}}

\def\nuint{\widetilde{\nu}}

\newcommand{\A}{\boldsymbol{A}}


\def\R{\mathbb{R}}

\def\N{\mathbb{N}}

\DeclareMathOperator{\Div}{div}

\newcommand{\medint}{-\kern  -,375cm\int}
\newcommand{\medintinrigo}{-\kern  -,315cm\int}

\newcommand{\eps}{\varepsilon}
 \newcommand{\hh}{{\mathcal H}^{N-1}}

\newcommand{\LLN}{{\mathcal L}^N}

\newcommand{\M}[1]{\mathcal{#1}}    
\renewcommand{\H}{\M{H}}

\newcommand{\res}{\mathop{\hbox{\vrule height 7pt width .5pt depth 0pt
\vrule height .5pt width 6pt depth 0pt}}\nolimits} 

\def\pscal#1#2{\left\langle #1\,,\, #2 \right\rangle}
\DeclareMathOperator{\sign}{sign}
\def\ut{\widetilde{u}}
\def\polar{\theta}

\long\def\taglio#1{}

\newtheorem{definition}{Definition}[section]

\newtheorem{lemma}[definition]{Lemma}

\newtheorem{theorem}[definition]{Theorem}

\newtheorem{proposition}[definition]{Proposition}

\newtheorem{corollary}[definition]{Corollary}
\theoremstyle{remark}
\newtheorem{remark}[definition]{Remark}
\newtheorem{example}[definition]{Example}

\makeatletter
\def\@settitle{\begin{center}%
		\baselineskip14\p@\relax
		\bfseries
		\uppercasenonmath\@title
		\@title
		\ifx\@subtitle\@empty\else
		\\[5ex]
		\normalsize\mdseries\@subtitle
		\fi
	\end{center}%
}
\def\subtitle#1{\gdef\@subtitle{#1}}
\def\@subtitle{}
\makeatother

\begin{document}
\title[Gauss--Green theorem]
{Anzellotti's pairing theory \\ and the Gauss--Green theorem}

\author[G.~Crasta]{Graziano Crasta}
\address{Dipartimento di Matematica ``G.\ Castelnuovo'', Univ.\ di Roma I\\
	P.le A.\ Moro 2 -- I-00185 Roma (Italy)}
\email{crasta@mat.uniroma1.it}
\author[V.~De Cicco]{Virginia De Cicco}
\address{Dipartimento di Scienze di Base  e Applicate per l'Ingegneria, Univ.\ di Roma I\\
	Via A.\ Scarpa 10 -- I-00185 Roma (Italy)}
\email{virginia.decicco@sbai.uniroma1.it}

\keywords{Anzellotti's pairing; divergence-measure fields; coarea formula; Gauss-Green formula}
\subjclass[2010]{28B05,46G10,26B30}
\date{October 10, 2018}

\begin{abstract}
In this paper we obtain a very general Gauss-Green formula 
for weakly differentiable functions 
and sets of finite perimeter.
This result is obtained by revisiting Anzellotti's pairing theory and by characterizing the measure 
pairing \((\A, Du)\) when
\(\A\) is a bounded divergence measure vector field and
\(u\) is a bounded function of bounded variation. 
\end{abstract}

\maketitle

\section{Introduction}

In the pioneering paper \cite{Anz},  Anzellotti established a pairing theory between 
weakly differentiable vector fields and \(BV\) functions.
Among other applications that will be mentioned below, this theory can be used to extend the validity of the
Gauss--Green formula to such vector fields and to 
non smooth domains.

As a means of comparison,
there are mainly two kinds of generalizations
of the Gauss--Green formula.
On one hand, 
one may consider
weakly differentiable vector fields but fairly regular
(e.g.\ Lipschitz) domains, see e.g.\ \cite{Cas}.
On the other hand, De Giorgi and Federer
consider fairly regular vector fields
and sets of finite perimeter
(see e.g.\ \cite[Theorem~3.36]{AFP}).
Other generalizations deal with weakly differentiable vector fields
and non-smooth domains,
see e.g.\ \cite{ChenFrid,ChTo,ChToZi}.
We mention also
\cite{ComiPayne,LeoSar} for some recent contributions on the subject.

In this paper we will prove a Gauss--Green formula 
valid for both
weakly differentiable vector fields and sets of finite perimeter.
This unifying result is obtained by revisiting Anzellotti's pairing theory
in the general case of 
divergence measure vector fields and $BV$ functions.
The core of the work 
is the characterization
of the normal traces of these vector fields 
and the analysis of the singular part of the pairing measure.
This will allow us to establish some nice formulas (coarea, chain rule, Leibnitz rule) for the pairing
and, eventually, to prove our general Gauss--Green formula.
We mention that, with our approach,
no approximation step
with smooth fields or smooth subdomains, 
in the spirit of \cite{Anz2} and \cite{ChenFrid,ChToZi,CoTo},
is needed.
On top of that, our feeling is that
the approximation with smooth fields may not work in our framework
(see the discussion before Proposition~\ref{p:approx}).

\smallskip

Let us describe in more detail the functional setting of the problem.
Let $\DM$ denote the class of
bounded divergence measure vector fields \(\A\colon\R^N \to \R^N\), 
i.e.\ the vector fields with the properties 
\(\A\in L^\infty\) and \(\Div \A\) is a finite Radon measure.
If \(\A\in\DM\) and \(u\) is a function of bounded variation with precise representative \(u^*\), 
then the distribution \((\A, Du)\), defined by
\begin{equation}\label{f:pintro}
\pscal{(\A, Du)}{\varphi} :=
-\int_{\R^N} u^*\varphi\, d \Div \A - \int_{\R^N} u \, \A\cdot \nabla\varphi\, dx,
\qquad \varphi\in C^\infty_c(\R^N)
\end{equation}
is a Radon measure in \(\R^N\), absolutely continuous with respect to \(|Du|\).
This fact has been proved by Anzellotti in \cite{Anz} for several combinations
of \(\A\) and \(u\)
(for instance $\Div\A\in L^1$ or $u$
a $BV$ continuous function), 
excluding the general case of 
$\A\in \DM$ and $u\in BV$.
Indeed, at that time, it was not clear how the discontinuities of \(u\)
interact with the discontinuities of the vector field \(\A\).
The pairing \eqref{f:pintro} has been defined in the general setting
by Chen--Frid
in the celebrated paper \cite{ChenFrid},
where the authors also characterized the absolutely continuous part
of the measure \((\A, Du)\) as \(\A \cdot \nabla u\).
Nevertheless, they have not characterized the singular part of the measure,
and, as far as we know, this problem has remained unsolved,
at least in this general setting.

\smallskip
On the other hand, the pairing 
in its full generality
has been revealed
a fundamental tool 
in several contexts. 
We cite, for example,
\cite{ChFr1,ChenFrid,ChTo2,ChTo,ChToZi,CDC2,Pan1}
for applications in the theory of hyperbolic systems of conservation and balance laws, 
and \cite{AmbCriMan} for the case of vector fields induced by functions of bounded deformation,
with the aim of extending the
Ambrosio--DiPerna--Lions theory of the transport equations (see also \cite{ADM}).

The divergence measure vector fields play a crucial role also in the theory of capillarity and 
in the study of the Prescribed Mean Curvature problem (see e.g.\ \cite{LeoSar,LeoSar2} and the references therein), 
and in the context of continuum mechanics 
(see e.g.\ \cite{DGMM,Silh,Schu}). 

Another field of application is related to
the Dirichlet problem for equations involving the $1$--Laplacian operator (see \cite{K1,HI,ABCM,Cas,DeGiOlPe,SchSch,SchSch2}). 
The interest in this setting comes out  from an optimal design problem, in the theory of torsion and from the level set formulation of the Inverse Mean Curvature Flow.  
To deal with the $1$--Laplacian $\Delta_1u:=\Div\left(\frac{Du}{|Du|}\right)$, 
the main difficulty is to define the quotient $\frac{Du}{|Du|}$, being $Du$ a Radon measure. 
This difficulty has been
overcome 
in \cite{ABCM,AVCM} 
through Anzellotti's theory of pairings. 
Namely, the role of this quotient is played by
a vector field $\A\in \DM$ such that $\|\A\|_\infty\le 1$ and $(\A, Du)=|Du|$.

Finally, in some lower semicontinuity problems for integral functionals defined in Sobolev spaces and in $BV$, 
the vector fields with measure--derivative occurred as natural dependence  of the integrand with respect to the spatial variable (see \cite{BouDM,DCFV2,dcl}).
To this end, we address the reader to the forthcoming paper \cite{CDC4},
where the authors introduce a nonlinear version of the pairing suitable for
applications to semicontinuity problems.

\medskip
Let us now describe in more detail the results proved in this paper.

Our first aim is to characterize the measure \((\A, Du)\) in the general case
\(\A\in\DM\) 
and
\(u\in BV\).
As we have already recalled above,
the absolutely continuous part of 
\((\A, Du)\) has been characterized
in \cite{ChenFrid} as \(\A\cdot \nabla u\),
hence only the jump and the Cantor parts
have to be studied.

The analysis of the jump part of the pairing
requires, in particular,
a detailed study of the normal traces of $u\A$ on
an oriented countably $\H^{N-1}$-rectifiable set $\Sigma$.
Following the arguments in \cite{AmbCriMan}, 
in Proposition~\ref{p:tracesb} below
we will prove that,
if \(\A\in\DM\) and \(u\in BV\cap L^\infty\),
then \(u\A \in \DM\) and the normal traces of \(u\A\) on \(\Sigma\) are given by
\[
\Trace{u\A}{\Sigma} = u^\pm \Trace{\A}{\Sigma},
\qquad \hh-\text{a.e.\ in}\ \Sigma. 	
\]

This allows us to give a precise description of the jump part $(\A, Du)^j$ of the measure $(\A, Du)$ in terms of the trace of $u$ and the normal trace of $\A$. 

Under the additional assumption
$|D^c u|(S_{\A}) = 0$,
where $D^c u$ is the Cantor part of $Du$ and
$S_{\A}$ is the approximate
discontinuity set of $\A$,
we are able to
give a representation formula for the 
Cantor part $(\A, Du)^c$ of the pairing measure.
In Remark~\ref{r:ipoCantor}
we will discuss some cases of interest
where this condition is satisfied.

In conclusion,
in Section~3
we will prove that the measure \((\A, Du)\) admits the following decomposition:
\begin{itemize}
	\item[(i)]
	absolutely continuous part: 
	\((\A, Du)^a = \A \cdot \nabla u\, \LLN\);
	
	\item[(ii)]
	jump part:
	\(\displaystyle
	(\A, Du)^j = 
	\frac{\Trp{\A}{J_u}+\Trm{\A}{J_u}}{2}
	\, (u^+-u^-) \, \hh \res J_u
	\);	

	\item[(iii)]
	Cantor part:
	if $|D^c u|(S_{\A}) = 0$, then
	\((\A, Du)^c = \widetilde{\A} \cdot D^c u\),
\end{itemize}
where $\widetilde{\A}$ is the approximate limit of $\A$
defined in $\R^N\setminus S_{\A}$.

\smallskip

In Section~\ref{s:form}, by using the above decomposition, 
we will be able to describe the Radon--Nikod\'ym derivative of the measure $(\A, Du)$
with respect to $|Du|$,
and to obtain a very general coarea formula.
As a consequence,
we will prove the Leibniz formula
for $(\A, D(uv))$ and $(v\A, Du)$.
Then, we will prove an approximation result 
by regular vector fields
and a semicontinuity result.

\smallskip

Finally, in Section~\ref{s:GG},
exploiting the formulas proved in Section~\ref{s:form},
we will prove our generalized Gauss-Green formula: 
if $\A\in\DM$, $u\in BV\cap L^\infty$,
and $E\subset\R^N$ is a bounded set with finite perimeter,
then
\begin{gather}
\int_{E^1} u^* \, d\Div\A + \int_{E^1} (\A, Du) = -
\int_{\partial ^*E} u^+\, \Trp{\A}{\partial^*E} \ d\mathcal H^{N-1}\,, \label{GreenIBINTRO}
\\
\int_{E^1\cup \partial ^*E} u^* \, d\Div\A + \int_{E^1\cup \partial ^*E} (\A, Du) = -
\int_{\partial ^*E} u^-\, \Trm{\A}{\partial^*E} \ d\mathcal H^{N-1}\,, \label{GreenIB2INTRO}
\end{gather}
where $E^1$ is the measure theoretic interior of $E$, \(\partial^* E\) is the reduced boundary of $E$
and \(\partial^* E\) is oriented
with respect to the interior unit normal vector.

As we have already underlined in this introduction,
a number of Gauss--Green formulas that can be found in the literature
are particular cases of \eqref{GreenIBINTRO}
and \eqref{GreenIB2INTRO}.

For example, the case $u\equiv 1$ has been considered
in the classical De Giorgi--Federer formula with $\A$ a regular vector field 
(see e.g.\ \cite[Theorem~3.36]{AFP}),
by Vol'pert \cite{vol,vol1} for $\A\in BV(\Omega, \R^N)$
and finally by Chen--Torres--Ziemer \cite{ChToZi} in the general case $\A\in\DM$.

The case of a non-constant $u$ has been considered
by Anzellotti \cite{Anz2} if $\Div\A\in L^1$,
by Comi--Payne \cite{ComiPayne} if $u$ is a locally Lipschitz function,
and by Leonardi--Saracco if $\A\in \DM\cap C^0$ (with some additional
conditions on $E$).

\section{Preliminaries}

In this paper we mainly follow the notation of \cite[Chapter~3]{AFP}.

In the following \(\Omega\) will always denote a nonempty open subset of \(\R^N\).

Let \(u\in L^1_{{\rm loc}}(\Omega)\).
We say that \(u\) has an approximate limit at $x_{0}\in\Omega$ 
if exists \(z\in\R\) such that
\begin{equation}
\label{f:apcont}
\lim_{r\rightarrow0^{+}}\frac{1}{\LLN\left(  B_r(x_0)\right)}\int_{B_r\left(  x_{0}\right)
}\left|  u(x)  -z  \right|  \,dx=0.
\end{equation}
The set \(S_u\subset\Omega\) of points where this property does not hold is called
the approximate discontinuity set of \(u\).
For every \(x_0\in\Omega\setminus S_u\) the number \(z\), uniquely determined by
\eqref{f:apcont}, is called the approximate limit of \(u\) at \(x_0\)
and denoted by \(\ut(x_0)\).

We say that \(x_0\in\Omega\) is an approximate jump point of \(u\) if
there exist \(a,b\in\R\) and a unit vector \(\nu\in\R^n\) such that \(a\neq b\)
and
\begin{equation}\label{f:disc}
\begin{gathered}
\lim_{r \to 0^+} \frac{1}{\LLN(B_r^+(x_0))}
\int_{B_r^+(x_0)} |u(y) - a|\, dy = 0,
\\
\lim_{r \to 0^+} \frac{1}{\LLN(B_r^-(x_0))}
\int_{B_r^-(x_0)} |u(y) - b|\, dy = 0,
\end{gathered}
\end{equation}
where \(B_r^\pm(x_0) := \{y\in B_r(x_0):\ \pm (y-x_0)\cdot \nu > 0\}\).
The triplet \((a,b,\nu)\), uniquely determined by \eqref{f:disc} 
up to a permutation
of \((a,b)\) and a change of sign of \(\nu\),
is denoted by \((u^+(x_0), u^-(x_0), \nu_u(x_0))\).
The set of approximate jump points of \(u\) will be denoted by \(J_u\).

The notions of approximate discontinuity set, approximate limit and approximate jump point 
can be obviously extended to the vectorial case
(see \cite[\S 3.6]{AFP}).

In the following we shall always extend the functions $u^\pm$ to
\(\Omega\setminus(S_u\setminus J_u)\) by setting
\[
u^\pm \equiv \widetilde{u}\ \text{in}\ \Omega\setminus S_u.
\]
 
In some occasions it will be useful to choose the orientation of \(\nu\)
in such a way that \(u^- < u^+\) in $J_u$.
These particular choices of $u^-$ and $u^+$ will be called
the approximate lower limit and the approximate upper limit of $u$ respectively.

Here and in the following we will denote by \(\rho \in C^\infty_c(\R^N)\) a
symmetric convolution kernel with support in the unit ball,
and by \(\rho_{\varepsilon}(x) := \varepsilon^{-N} \rho(x/\varepsilon)\).

In the sequel we will use often the following result
(see \cite[Proposition~3.64(b)]{AFP}).

\begin{proposition}\label{lolol}
Let \(u\in L^1_{{\rm loc}}(\Omega)\)
and define
\[
u_{\varepsilon}\left(  x\right) = \rho_{\varepsilon} \ast u (x) :=\int_{\Omega}\rho_{\varepsilon}\left(
x-y\right)  \,u\left(  y\right)  \,dy.
\]
If \(x_0\in\Omega\setminus S_u\), then \(u_\eps(x_0) \to \ut(x_0)\)
as \(\eps\to 0^+\).
\end{proposition}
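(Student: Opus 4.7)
The plan is to use the fact that $\rho_\varepsilon$ is a bona fide mollifier with $\int \rho_\varepsilon = 1$ and support in $B_\varepsilon(0)$, which lets us rewrite $u_\varepsilon(x_0) - \widetilde{u}(x_0)$ as a local average of $u(y) - \widetilde{u}(x_0)$ and then compare it directly with the mean-value integral appearing in the definition \eqref{f:apcont} of the approximate limit.

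More precisely, set $z := \widetilde{u}(x_0)$ and fix $\varepsilon$ small enough that $B_\varepsilon(x_0) \subset \Omega$. Since $\int_{\R^N} \rho_\varepsilon \, dy = 1$ and $\rho_\varepsilon$ is supported in $B_\varepsilon(0)$, I would write
\[
u_\varepsilon(x_0) - z = \int_{B_\varepsilon(x_0)} \rho_\varepsilon(x_0 - y)\,\bigl(u(y) - z\bigr)\, dy.
\]
Using the pointwise bound $\|\rho_\varepsilon\|_\infty = \varepsilon^{-N}\|\rho\|_\infty$ and the fact that $\LLN(B_\varepsilon(x_0)) = \omega_N \varepsilon^N$, this yields
\[
\bigl|u_\varepsilon(x_0) - z\bigr|
\;\le\; \frac{\|\rho\|_\infty}{\varepsilon^N} \int_{B_\varepsilon(x_0)} |u(y) - z|\, dy
\;=\; \omega_N \|\rho\|_\infty \cdot \frac{1}{\LLN(B_\varepsilon(x_0))}\int_{B_\varepsilon(x_0)} |u(y) - z|\, dy.
\]
By the definition \eqref{f:apcont} of $z = \widetilde{u}(x_0)$, the last average tends to $0$ as $\varepsilon \to 0^+$, and the conclusion follows.

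There is no real obstacle here: the argument is essentially a rephrasing of \eqref{f:apcont} using that the mollifier has integral one and is bounded by $C \varepsilon^{-N}$ on a ball of volume $\omega_N \varepsilon^N$. The only mild caveat I would want to flag is the initial reduction to $B_\varepsilon(x_0) \subset \Omega$, which is automatic for $\varepsilon$ small because $x_0$ is an interior point; and the observation that the symmetry of $\rho$ plays no role in this step (it is used elsewhere in the theory, but not here).
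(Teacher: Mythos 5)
Your proof is correct and is essentially the same argument behind the result the paper invokes: the paper gives no proof of its own, citing \cite[Proposition~3.64(b)]{AFP}, whose proof is exactly this mollification estimate, namely bounding \(|u_\varepsilon(x_0)-\ut(x_0)|\) by a constant times the mean value of \(|u-\ut(x_0)|\) over \(B_\varepsilon(x_0)\) and applying the definition of approximate limit. The only implicit ingredient worth stating is that the convolution kernel is normalized so that \(\int_{\R^N}\rho\,dx=1\), which is what makes your first identity valid.
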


\subsection{Functions of bounded variation and sets of finite perimeter}

We say that \(u\in L^1(\Omega)\) is a function of bounded variation in \(\Omega\)
if the distributional derivative \(Du\) of \(u\) is a finite Radon measure in \(\Omega\).
The vector space of all functions of bounded variation in \(\Omega\)
will be denoted by \(BV(\Omega)\).
Moreover, we will denote by \(BV_{{\rm loc}}(\Omega)\) the set of functions
\(u\in L^1_{{\rm loc}}(\Omega)\) that belongs to 
\(BV(A)\) for every open set \(A\Subset\Omega\)
(i.e., the closure \(\overline{A}\) of \(A\) is a compact
subset of \(\Omega\)).

If \(u\in BV(\Omega)\), then \(Du\) can be decomposed as
the sum of the absolutely continuous and the singular part with respect
to the Lebesgue measure, i.e.\
\[
Du = D^a u + D^s u,
\qquad D^a u = \nabla u \, \LLN,
\]
where \(\nabla u\) is the approximate gradient of \(u\),
defined \(\LLN\)-a.e.\ in \(\Omega\)
(see \cite[Section~3.9]{AFP}).
On the other hand, 
the jump set $J_u$ is countably $\H^{N-1}$--rectifiable,
$\H^{N-1}(S_u  \setminus J_u) = 0$
(see \cite[Definition~2.57 and Theorem~3.78]{AFP}), and
the singular part \(D^s u\) can be further decomposed
as the sum of its Cantor and jump part, i.e.
\[
D^s u = D^c u + D^j u,
\qquad
D^c u := D^s u \res (\Omega\setminus S_u),
\quad
D^j u := D^s u \res J_u,
\]
where the symbol \(\mu\res B\) denotes the restriction of the measure \(\mu\)
to the set \(B\).
We will denote by \(D^d u := D^a u + D^c u\) the diffuse part of the measure \(Du\).

The precise representative $u^*$ of $u$ is defined in
$\Omega\setminus(S_u\setminus J_u)$
(hence $\H^{N-1}$-a.e.\ in $\Omega$)
as $\ut(x)$ when  $x\in \Omega\setminus S_u$, and
as $(u^+(x)+u^-(x)) / 2$ when $x\in J_u$.
The mollified functions $u_\varepsilon$ pointwise converge to
$u^*$ in its domain
(see \cite[Corollary~3.80]{AFP}).

In the following,
we will denote by $\polar_u\colon\Omega\to S^{N-1}$ the Radon--Nikod\'ym derivative
of $Du$ with respect to $|Du|$, i.e.\
the unique function $\polar_u \in L^1(\Omega, |Du|)^N$ such that the polar
decomposition $Du = \polar_u\, |Du|$ holds.
Since all parts of the derivative of $u$ are mutually singular, we have
\[
D^a u = \polar_u\, |D^a u|,
\quad
D^j u = \polar_u\, |D^j u|,
\quad
D^c u = \polar_u\, |D^c u|
\]
as well.
In particular $\polar_u(x) = \nabla u(x) / |\nabla u(x)|$ for
$\LLN$-a.e.\ $x\in\Omega$ such that $\nabla u(x) \neq 0$
and
$\polar_u(x) = \sign(u^+(x) - u^-(x))\, \nu_u(x)$
for $\H^{N-1}$-a.e.\ $x\in J_u$.

Let \(E\) be an \(\LLN\)-measurable subset of \(\R^N\).
For every open set \(\Omega\subset\R^N\) the perimeter \(P(E, \Omega)\)
is defined by
\[
P(E, \Omega) := \sup\left\{
\int_E \Div \varphi\, dx:\ \varphi\in C^1_c(\Omega, \R^N),\ \|\varphi\|_\infty\leq 1
\right\}.
\]
We say that \(E\) is of finite perimeter in \(\Omega\) if \(P(E, \Omega) < +\infty\).

Denoting by \(\chi_E\) the characteristic function of \(E\),
if \(E\) is a set of finite perimeter in \(\Omega\), then
\(D\chi_E\) is a finite Radon measure in \(\Omega\) and
\(P(E,\Omega) = |D\chi_E|(\Omega)\).

If \(\Omega\subset\R^N\) is the largest open set such that \(E\)
is locally of finite perimeter in \(\Omega\),
we call reduced boundary \(\partial^* E\) of \(E\) the set of all points
\(x\in \Omega\) in the support of \(|D\chi_E|\) such that the limit
\[
\nuint_E(x) := \lim_{\rho\to 0^+} \frac{D\chi_E(B_\rho(x))}{|D\chi_E|(B_\rho(x))}
\]
exists in \(\R^N\) and satisfies \(|\nuint_E(x)| = 1\).
The function \(\nuint\colon\partial^* E\to S^{N-1}\) is called
the measure theoretic unit interior normal to \(E\).

A fundamental result of De Giorgi (see \cite[Theorem~3.59]{AFP}) states that
\(\partial^* E\) is countably \((N-1)\)-rectifiable
and \(|D\chi_E| = \hh\res \partial^* E\).

Let \(E\) be an \(\LLN\)-measurable subset of \(\R^N\).
For every \(t\in [0,1]\) we denote by \(E^t\) the set
\[
E^t := \left\{x\in\R^N:\
\lim_{\rho\to 0^+} \frac{\LLN(E\cap B_\rho(x))}{\LLN(B_\rho(x))} = t\right\}
\]
of all points where \(E\) has density \(t\).
The sets \(E^0\), \(E^1\), \(\partial^e E := \R^N\setminus (E^0 \cup E^1)\) are called 
respectively the measure theoretic exterior, the measure theoretic interior and
the essential boundary of \(E\).
If \(E\) has finite perimeter in \(\Omega\), Federer's structure theorem
states that
\(\partial^* E\cap\Omega \subset E^{1/2} \subset \partial^e E\)
and \(\H^{N-1}(\Omega\setminus(E^0\cup \partial^e E \cup E^1)) = 0\)
(see \cite[Theorem~3.61]{AFP}).

\subsection{Capacity.}\label{not5}
In this section we recall the notion of $1$-capacity
and some results (Theorem~\ref{t:lecce} and Lemma~\ref{maso1})
that will be used in the proof of Proposition~\ref{approx2}.

Given an open set $A\subset\R^N$, the {\it $1$-capacity} of $A$ is defined by setting  
\begin{equation*}
C_1(A) : = \inf\left\{\int_{\R^N } |D\varphi|\,dx\ :\ \varphi\in W^{1,1}(\R^N),
\quad \varphi\geq 1\quad \LLN{\rm -a.e.\ on}\ A\right\}\,.
\end{equation*}
Then, the $1$-capacity of an arbitrary set $B\subset\R^N$ is given by
\begin{equation*}
C_1(B) := \inf\{C_1(A)\ :\ A\supseteq B,\ A\ {\rm open}\}\,.
\end{equation*}
It is well known that capacities and Hausdorff measure are closely related. In particular, we have
that for every Borel set $B\subset\R^N$
\[
C_1(B)=0\qquad\Longleftrightarrow\qquad
\H^{N-1}(B)=0\,.
\]

We recall that a function $u\colon\R^N\to\R$ is said $C_1$-quasi continuous
if for every $\eps>0$ there exists an open set $A$, with $C_1(A)<\eps$, such that
the restriction $u\res{A^c}$ is continuous on $A^c$;
$C_1$-quasi lower semicontinuous  and $C_1$-quasi upper semicontinuous
functions are defined similarly.

It is well known that every function $u\in W^{1,1}$
admits a $C_1$-quasi continuous representative 
that coincides $\H^{N-1}$-a.e.\ with $\ut$
(see \cite[Sections 9 and 10]{FZ}).
Moreover, to every $BV$-function $u$, it is possible to associate a $C_1$-quasi lower semicontinuous
and a $C_1$-quasi upper semicontinuous representative, as stated by the
following theorem (see \cite{CDLP}, Theorem~2.5).

\medskip

\begin{theorem}\label{t:lecce}
	For every function $u\in BV(\Omega)$, the approximate upper limit $u^+$ and the approximate
	lower limit $u^-$ are $C_1$-quasi upper semicontinuous and $C_1$-quasi lower semicontinuous,
	respectively.
\end{theorem}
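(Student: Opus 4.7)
The strategy is to reduce the statement to the case of characteristic functions of sets of finite perimeter via a layer-cake argument, and then settle the reduced case by $W^{1,1}$-approximation combined with a capacitary version of Egoroff's theorem.

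For the reduction, one may assume $u$ is bounded (by truncation at $\pm M$, letting $M \to \infty$ with geometrically summed exceptional capacities). Setting $E_t := \{u > t\}$, which has finite perimeter for $\Leb{1}$-a.e.\ $t$, the identities
\[
u^-(x) = \sup\{t \in \Q: x \in E_t^1\}, \qquad u^+(x) = \inf\{t \in \Q: x \in E_t^0\}
\]
hold $\hh$-almost everywhere, hence outside a $C_1$-polar set. Consequently $\{u^- > s\} = \bigcup_{t \in \Q,\, t > s} E_t^1$ and $\{u^+ \geq s\} = \bigcap_{t \in \Q,\, t < s}(\R^N \setminus E_t^0)$. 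Thus, if for each $t$ in a countable dense family of good levels one can produce an open set $A_t$ with $C_1(A_t) < \varepsilon 2^{-n}$ such that $\chi_{E_t^1}$ is lower semicontinuous on $A_t^c$ and $\chi_{\R^N \setminus E_t^0}$ is upper semicontinuous on $A_t^c$, then $A := \bigcup_t A_t$ has $C_1(A) < \varepsilon$ and yields the desired quasi-semicontinuity of $u^\pm$.

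For the level-set case, fix $E$ of finite perimeter and approximate $\chi_E$ by smooth $\phi_k \in W^{1,1}(\R^N)$ with $0 \leq \phi_k \leq 1$, $\phi_k \to \chi_E$ in $L^1$, and $|D\phi_k|(\R^N) \to P(E, \R^N)$. A careful choice of $\phi_k$ --- typically truncations of mollifications at generic levels selected via the coarea formula, so that $\{\phi_k\}$ is also Cauchy in a capacity-sense --- allows one to invoke a capacitary Egoroff lemma: for every $\eta > 0$ there is an open set $A_\eta$ with $C_1(A_\eta) < \eta$ on whose complement the $C_1$-quasi-continuous representatives of $\phi_k$ converge uniformly. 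This uniform limit is then continuous on $A_\eta^c$, and may be identified $C_1$-q.e.\ with $\chi_{E^1}$ (for $u^-$) and with $\chi_{E^1 \cup \partial^* E}$ (for $u^+$), yielding the quasi lower and upper semicontinuity respectively.

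The main obstacle is the capacitary-Egoroff step. Strict $BV$-convergence of $\phi_k$ to $\chi_E$ does not automatically give convergence in $C_1$-capacity of the quasi-continuous representatives, since $\chi_E \notin W^{1,1}(\R^N)$. The argument must therefore engineer a $W^{1,1}$-Cauchy (or capacity-Cauchy) property for the approximating sequence, typically through a judicious choice of truncation heights via the coarea formula; once such a property is secured, the classical fact that $W^{1,1}$-Cauchy sequences have $C_1$-quasi-uniformly convergent quasi-continuous representatives closes the argument, and identifying the resulting limit with the correct pointwise representative is a matter of a final $C_1$-null adjustment.
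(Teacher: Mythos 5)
Your reduction step is sound: writing $u^-$ and $u^+$ through the level sets $E_t=\{u>t\}$ at countably many good levels, using that $C_1$-null sets can be swallowed by open sets of arbitrarily small capacity and that capacity is countably subadditive, does reduce the theorem to the quasi-semicontinuity of $\chi_{E^1}$ and $\chi_{E^1\cup\partial^*E}$ for a set $E$ of finite perimeter. (Note the paper itself gives no proof: Theorem~\ref{t:lecce} is quoted from \cite{CDLP}, Theorem~2.5, so there is no internal argument to compare with; the proposal must stand on its own.)

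The level-set step, however, is where your argument genuinely fails, and the obstacle you flag cannot be removed by a cleverer choice of approximants. First, no sequence $\phi_k$ with $\phi_k\to\chi_E$ in $L^1$ can be made Cauchy in $W^{1,1}(\R^N)$: its $W^{1,1}$ limit would be $\chi_E$ itself, forcing $D\chi_E\ll\Leb{N}$ and hence $P(E,\R^N)=0$, so for any set of positive perimeter this route is closed no matter how the truncation levels are chosen. Second, the ``capacitary Egoroff'' alternative proves too much: if quasi-continuous functions $\phi_k$ converge $C_1$-quasi-uniformly (equivalently, in $C_1$-capacity), the limit is itself $C_1$-quasi-continuous, and since mollification-type approximants converge $\H^{N-1}$-a.e.\ (hence $C_1$-q.e.) to the precise representative, that limit is $\chi_E^*$. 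Thus your scheme would show that $\chi_E^*$ is quasi-continuous, i.e.\ simultaneously quasi-usc and quasi-lsc, which is false already for a half-space or a ball: at $\hh$-a.e.\ point of $\partial^*E$ left outside the exceptional open set $A$, relative continuity forces $A$ to contain a full one-sided (or punctured) neighborhood of that point, and a covering argument together with the comparability of $C_1$ with $(N-1)$-dimensional Hausdorff content (boxing inequality) bounds $C_1(A)$ from below by a fixed multiple of the local perimeter, so $C_1(A)$ cannot be small. Your final identification is also internally inconsistent: a single quasi-uniform limit cannot coincide $C_1$-q.e.\ with both $\chi_{E^1}$ and $\chi_{E^1\cup\partial^*E}$, since these differ on $\partial^*E$, a set of positive capacity when $P(E)>0$, so no ``final $C_1$-null adjustment'' can convert one into the other. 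The correct mechanism, as in \cite{CDLP} and as reflected in the paper's Lemma~\ref{maso1}, is intrinsically one-sided: quasi-lower semicontinuity is obtained as a countable supremum (monotone q.e.\ limit) of quasi-continuous $W^{1,1}$ functions, combined with weak-type capacitary estimates for $BV$ functions, not from any uniform-type convergence.
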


In particular, if $B$ is a Borel subset of $\R^N$ with finite perimeter, then $\chi^{-}_B$
is $C_1$-quasi lower semicontinuous and $\chi^{+}_B$
is $C_1$-quasi upper semicontinuous.

We recall the following lemma which is an approximation result due to Dal Maso (see \cite{DM83},
Lemma 1.5 and \S 6).

\begin{lemma}\label{maso1}
	Let $u\colon\R^N\to[0,+\infty)$ be a $C_1$-quasi  lower semicontinuous function.
	Then there exists an increasing sequence of nonnegative
	functions $\{u_h\}\subseteq W^{1,1}(\R^N)$ such that, for every $h\in\N$,
	$u_h$ is approximately continuous $\H^{N-1}$-almost everywhere in $\R^N$ and
	$\ut_h(x)\to u(x)$, when $h\to+\infty$,  for $\H^{N-1}$-almost every $x\in\R^N$.
\end{lemma}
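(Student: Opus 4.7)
The plan is to combine a layer-cake decomposition of $u$ with a capacitary approximation of its super-level sets, and to glue the two together through discretization and monotonization. Since $u\geq 0$, I would start from
\[
u(x)=\int_0^{+\infty}\chi_{\{u>t\}}(x)\,dt.
\]
Quasi-lower-semicontinuity of $u$ means that for every $\eps>0$ there is an open $A_\eps$ with $C_1(A_\eps)<\eps$ and $u|_{A_\eps^c}$ lower semicontinuous in the ordinary sense; a short argument shows this forces each super-level set $E_t:=\{u>t\}$ to be $C_1$-quasi-open, meaning that $E_t\cup A_\eps$ is open. The task then reduces to approximating $\chi_E$ from below, $\H^{N-1}$-a.e.\ and monotonically, by Sobolev functions, for $E$ a generic $C_1$-quasi-open set.

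For such a set $E$, I would build $\psi_k\in W^{1,1}(\R^N)$, $0\leq\psi_k\leq 1$, with $\widetilde{\psi}_k\nearrow\chi_E$ outside an $\H^{N-1}$-null set, each $\widetilde{\psi}_k$ approximately continuous $\H^{N-1}$-a.e. The construction uses the definition of $C_1$: pick open $U_k$ with $E\cup U_k$ open and $C_1(U_k)<2^{-k}$, and choose $\varphi_k\in W^{1,1}(\R^N)$ with $\varphi_k\geq 1$ on $U_k$ and $\|\varphi_k\|_{W^{1,1}}<2^{-k+1}$. A suitable truncation and mollification of an approximation of $\chi_{E\cup U_k}$ corrected by $\varphi_k$ produces $\psi_k$; approximate continuity $\H^{N-1}$-a.e.\ is automatic because every $W^{1,1}$ function admits a $C_1$-quasi continuous representative, and $C_1$-null sets coincide with $\H^{N-1}$-null sets. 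Monotonicity is recovered by replacing $\psi_k$ with $\max(\psi_1,\dots,\psi_k)$.

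To assemble the target sequence, I would discretize the layer cake with step $2^{-h}$, truncated at level $h$, setting
\[
s_h(x):=2^{-h}\sum_{j=1}^{h\,2^h}\chi_{E_{j2^{-h}}}(x),\qquad s_h\nearrow u\text{ pointwise},
\]
and then replace each characteristic function by a building block $\psi_{j,k(j,h)}$ from the previous step, with $k(j,h)$ chosen so large that the associated bad sets $U_{k(j,h)}$ have $C_1$-capacities summable in $h$. The resulting finite sum $u_h\in W^{1,1}(\R^N)$ is monotonized by $u_h\leftarrow\max(u_h,u_{h-1})$, which preserves the $W^{1,1}$ regularity and the approximate continuity $\H^{N-1}$-a.e. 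A Borel--Cantelli argument on $\sum_{h,j} C_1(U_{k(j,h)})<\infty$ then yields $\widetilde{u}_h(x)\to u(x)$ for $\H^{N-1}$-a.e.\ $x$.

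The main obstacle lies in the building-block step. The subtlety is that $\chi_E$ is not itself in $W^{1,1}$, so the approximants $\psi_k$ must blur its jump; yet at the level of the precise representative $\widetilde{\psi}_k$ they must converge sharply $\H^{N-1}$-a.e., not merely in capacity or in measure. The delicate point is tuning the truncation height and mollification parameter together with the capacitary correction $\varphi_k$ so that the bad set at each scale has capacity summable in $k$, which forces $\H^{N-1}$-a.e.\ convergence of the representatives via Borel--Cantelli while keeping the sequence monotone.
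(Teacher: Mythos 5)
Your first two steps are sound: superlevel sets of a $C_1$-quasi lower semicontinuous function are indeed quasi-open, and for a quasi-open $E$ one can build $W^{1,1}$ functions below $\chi_E$ by taking inner Lipschitz approximations of the open sets $E\cup U_k$ and subtracting the capacitary corrections $\varphi_k$; note also that the paper does not prove this lemma at all (it is quoted from Dal Maso \cite{DM83}), so you are on your own here. The genuine gap is in the assembly, and in the very tuning you flag as the delicate point. You propose to choose the truncation/mollification parameters ``so that the bad set at each scale has capacity summable''. This cannot be arranged: if $\psi\in W^{1,1}(\R^N)$ satisfies $0\le\psi\le\chi_E$ a.e., the set $\{x\in E:\ \widetilde{\psi}(x)<1-\delta\}$ can \emph{never} have small $C_1$-capacity in general. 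Already for $E=B_1$: since $D\psi$ has no jump part, the trace of $\psi$ on $\partial B_1$ from inside equals the outside trace, i.e.\ $0$, so along $\H^{N-1}$-a.e.\ radial ray $\psi$ is absolutely continuous and must pass below $1-\delta$ on a nondegenerate interval touching $\partial B_1$; the resulting failure set has capacity comparable to the perimeter of $B_1$, no matter how fine the mollification. In other words, only the correction sets $U_k$ are capacitarily small; the ``inner'' part of each block converges to $\chi_{E_t}$ only pointwise quasi-everywhere, with a rate depending on the point, and never quasi-uniformly. Consequently, in your discretized layer cake $u_h=2^{-h}\sum_{j\le h2^h}\psi_{j,k(j,h)}$ the bulk of the weight at stage $h$ always sits on freshly introduced levels, whose blocks may be far from $1$ at the given point; Borel--Cantelli applied to the $U$'s alone does not rule this out, and taking $\max(u_h,u_{h-1})$ does not repair it, so $\liminf_h\widetilde{u}_h(x)\ge u(x)$ does not follow from the argument as written.

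The fix stays inside your toolbox but requires replacing the weighted sum by suprema. For each dyadic $t>0$ and all indices $k$ (capacitary correction) and $i$ (inner Lipschitz scale and ball cutoff), the functions $g_{t,k,i}:=t\,\bigl(\eta^{(t)}_{k,i}-\varphi^{(t)}_k\bigr)^+$ lie in $W^{1,1}(\R^N)$, satisfy $\widetilde{g}_{t,k,i}\le t\,\chi_{E_t}\le u$ quasi-everywhere \emph{pointwise} (outside $E_t$ either $\eta^{(t)}_{k,i}=0$ or $\widetilde{\varphi}^{(t)}_k\ge1$), and their countable supremum equals $u$ quasi-everywhere, because for q.e.\ $x\in E_t$ one has $\sup_i\eta^{(t)}_{k,i}(x)=1$ for every $k$ while $\widetilde{\varphi}^{(t)}_k(x)\to0$ by Borel--Cantelli. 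Enumerating this countable family and setting $u_h:=\max$ of its first $h$ members gives an increasing sequence in $W^{1,1}(\R^N)$, each approximately continuous $\H^{N-1}$-a.e., with $\widetilde{u}_h\to u$ $\H^{N-1}$-a.e.; no per-stage uniformity over levels is ever needed, which is exactly what your sum required and could not deliver.
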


\subsection{Divergence--measure fields }
\label{ss:div}

We will denote by \(\DM(\Omega)\) the space of all
vector fields 
\(\A\in L^\infty(\Omega, \R^N)\)
whose divergence in the sense of distribution is a bounded Radon measure in \(\Omega\).
Similarly, \(\DMloc[\Omega]\) will denote the space of
all vector fields \(\A\in L^\infty_{{\rm loc}}(\Omega, \R^N)\)
whose divergence in the sense of distributions is a Radon measure in \(\Omega\). 
We set \(\DM = \DM(\R^N)\).

We recall that, if \(\A\in\DMloc[\Omega]\), then \(|\Div\A| \ll \hh\)
(see \cite[Proposition 3.1]{ChenFrid}).
As a consequence, the set
\begin{equation}\label{f:jump}
\jump{\A} 
:= \left\{
x\in\Omega:\
\limsup_{r \to 0+}
\frac{|\Div \A| (B_r(x))}{r^{N-1}} > 0
\right\},
\end{equation} 
is a Borel set, \(\sigma\)-finite with respect to \(\hh\),
and the measure \(\Div \A\) can be decomposed as
\[
\Div\A = \Div^a\A + \Div^c\A + \Div^j\A,
\]
where \(\Div^a\A\) is absolutely continuous with respect to \(\LLN\),
\(\Div^c\A (B) = 0\) for every set \(B\) with \(\hh(B) < +\infty\),
and
\[
\Div^j\A = f\, \hh\res\jump{\A}
\]
for some Borel function \(f\)
(see \cite[Proposition~2.5]{ADM}).

\subsection{Normal traces}
\label{distrtraces}
The traces of the normal component of the vector field \(\A\in \DMloc\)
can be defined as distributions
\(\Trace{\A}{\Sigma}\)
on every oriented countably \(\H^{N-1}\)--rectifiable set
\(\Sigma\subset\Omega\)
in the sense of Anzellotti
(see \cite{AmbCriMan,Anz,ChenFrid}).

More precisely, 
let us briefly recall the construction given in \cite{AmbCriMan} (see Propositions~3.2, 3.4 and Definition~3.3).
First of all, given a domain \(\Omega'\Subset\Omega\) of class \(C^1\), we define
the trace of the normal component of \(\A\) on \(\partial\Omega'\) as
a distribution as follows:
\begin{equation}\label{f:disttr}
\pscal{\Trace[]{\A}{\partial\Omega'}}{\varphi}
:= \int_{\Omega'} \A\cdot \nabla\varphi\, dx + \int_{\Omega'} \varphi\, d\Div\A,
\qquad
\forall\varphi\in C^\infty_c(\Omega).
\end{equation}
It turns out that this distribution is induced by an \(L^\infty\) function on \(\partial\Omega'\),
still denoted by \(\Trace[]{\A}{\partial\Omega'}\), and
\[
\|\Trace[]{\A}{\partial\Omega'}\|_{L^\infty(\partial\Omega')}
\leq \|\A\|_{L^\infty(\Omega')}.
\]

Since \(\Sigma\) is oriented and countably $\hh$--rectifiable,
we can find countably many \textsl{oriented} \(C^1\) hypersurfaces \(\Sigma_i\),
with classical normal \(\nu_{\Sigma_i}\),
and pairwise disjoint Borel sets \(N_i\subseteq \Sigma_i\)
such that \(\hh(\Sigma\setminus \bigcup_i N_i) = 0\).

Moreover, it is not restrictive to assume that, for every \(i\),  
there exist two open bounded sets \(\Omega_i, \Omega'_i\) with \(C^1\) boundary
and exterior normal vectors \(\nu_{\Omega_i}\) and \(\nu_{\Omega_i'}\) respectively,
such that
\(N_i\subseteq \partial\Omega_i \cap \partial\Omega'_i\)
and
\[
\nu_{\Sigma_i}(x) = \nu_{\Omega_i}(x) = -\nu_{\Omega'_i}(x)
\qquad \forall x\in N_i.
\]
At this point we choose, on \(\Sigma\), the orientation given by
\(\nu_{\Sigma}(x) := \nu_{\Sigma_i}(x)\)
\(\hh\)-a.e.\ on \(N_i\).

Using the localization property proved in \cite[Proposition 3.2]{AmbCriMan},
we can define
the normal traces of \(\A\) on \(\Sigma\) by
\[
\Trm{\A}{\Sigma} := \Tr(\A, \partial\Omega_i),
\quad
\Trp{\A}{\Sigma} := -\Tr(\A, \partial\Omega'_i),
\qquad
\hh-\text{a.e.\ on}\ N_i.
\]

These two normal traces belong to
\(L^{\infty}(\Sigma, \H^{N-1}\res\Sigma)\) (see \cite[Proposition 3.2]{AmbCriMan})
and
\begin{equation}\label{mmm}
\Div \A \res\Sigma =
\left[\Trp{\A}{\Sigma} - \Trm{\A}{\Sigma}\right]
\, {\mathcal H}^{N-1} \res\Sigma\,.
\end{equation}

\subsection{Anzellotti's pairing}
As in Anzellotti \cite{Anz} (see also \cite{ChenFrid}),
for every \(\A\in\DMloc\) and \(u\in\BVLloc\) we define the linear functional
\((\A, Du) \colon C^\infty_0(\Omega) \to \R\) by
\begin{equation}\label{f:pairing}
\pscal{(\A, Du)}{\varphi} :=
-\int_\Omega u^*\varphi\, d \Div \A - \int_\Omega u \, \A\cdot \nabla\varphi\, dx. 
\end{equation}
The distribution \((\A, Du)\) is a Radon measure in \(\Omega\),
absolutely continuous with respect to \(|Du|\)
(see \cite[Theorem 1.5]{Anz} and \cite[Theorem 3.2]{ChenFrid}),
hence the equation
\begin{equation}\label{f:anz}
\Div(u\A) = u^* \Div\A + (\A, Du)
\end{equation}
holds in the sense of measures in \(\Omega\)
(We remark that, in \cite{ChenFrid}, the measure \((\A, Du)\) is denoted
by \(\overline{\A\cdot Du}\).)
Furthermore, Chen and Frid in \cite{ChenFrid} proved that the absolutely continuous part
of this measure with respect to the Lebesgue measure is given by
\(
(\A, Du)^a = \A \cdot \nabla u\, \LLN
\).

\section{Characterization of Anzellotti's pairing}
\label{s:anz}

\begin{proposition}\label{p:tracesb}
	Let \(\A\in\DMloc\),  \(u\in\BVLloc\) and let \(\Sigma\subset\Omega\) 
	be a countably \(\hh\)--rectifiable set, oriented
	as in Section~\ref{distrtraces}.
	Then \(u\A\in\DMloc\) and the normal traces of \(u\A\) on \(\Sigma\) are given by
	\begin{equation}\label{f:trus}
		\Trace{u\A}{\Sigma} = 
		\begin{cases}
			u^\pm \Trace{\A}{\Sigma},
			& \hh-\text{a.e.\ in}\ J_u\cap\Sigma,\\
			\ut\, \Trace{\A}{\Sigma},
			& \hh-\text{a.e.\ in}\ \Sigma\setminus J_u. 	
		\end{cases}
	\end{equation}
\end{proposition}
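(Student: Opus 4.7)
The first task is to verify that $u\A\in\DMloc$. Anzellotti's identity \eqref{f:anz} reads $\Div(u\A)=u^*\Div\A+(\A,Du)$, and both summands are Radon measures on $\Omega$: $u^*\Div\A$ because $u^*$ is bounded, defined $\hh$-a.e., and $|\Div\A|\ll\hh$ (see Section~\ref{ss:div}); $(\A,Du)$ by \cite[Theorem~1.5]{Anz} and \cite[Theorem~3.2]{ChenFrid}. Since $u\A\in L^\infty_{\rm loc}(\Omega,\R^N)$ is immediate, it follows that $u\A\in\DMloc$.

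For the trace formula \eqref{f:trus}, by the construction recalled in Section~\ref{distrtraces} it suffices to identify $\Tr(u\A,\partial\Omega_i)$ on each $N_i\subset\partial\Omega_i$ and $\Tr(u\A,\partial\Omega'_i)$ on each $N_i\subset\partial\Omega'_i$. Since $\nu_{\Omega_i}=\nu_\Sigma=-\nu_{\Omega'_i}$ on $N_i$, the interior of $\Omega_i$ lies on the side of $\Sigma$ opposite to $\nu_\Sigma$, which by the orientation convention is precisely the side where the approximate limit of $u$ at a point of $J_u\cap N_i$ equals $u^-$; the interior of $\Omega'_i$ lies on the opposite side, where $u^+$ is the relevant limit; off $J_u$ both sides produce $\ut$. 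For a smooth $v\in C^\infty(\Omega)$, integration by parts in \eqref{f:disttr}, using $\Div(v\A)=v\Div\A+\A\cdot\nabla v$, yields the pointwise identity
\[
\Tr(v\A,\partial\Omega_i)=v|_{\partial\Omega_i}\,\Tr(\A,\partial\Omega_i)\qquad\hh\text{-a.e. on }\partial\Omega_i.
\]
The plan is to approximate $u$ by smooth functions $u_\eps$ such that (i) $u_\eps\A\to u\A$ in $L^1_{\rm loc}$ and $\int_{\Omega_i}\varphi\,d\Div(u_\eps\A)\to\int_{\Omega_i}\varphi\,d\Div(u\A)$ for every $\varphi\in C^\infty_c(\Omega)$, so that the distributional identity defining $\Tr(u_\eps\A,\partial\Omega_i)$ passes to the limit; and (ii) $u_\eps|_{\partial\Omega_i}\to u^-$ $\hh$-a.e.\ on $J_u\cap N_i$ and $u_\eps|_{\partial\Omega_i}\to\ut$ $\hh$-a.e.\ on $N_i\setminus J_u$. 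Following \cite{AmbCriMan}, both requirements are met by a one-sided mollification: convolve $u$ with a standard symmetric kernel $\rho_\eps$ after translating by an amount of order $\eps$ along a smooth extension of the inward unit normal $-\nu_{\Omega_i}$ to a tubular neighborhood of $N_i$. Passing to the limit in the smooth identity then gives \eqref{f:trus}; the ``$+$'' case follows symmetrically via $\Omega'_i$.

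The main obstacle is achieving (i) and (ii) simultaneously. A naive symmetric mollification $u_\eps=\rho_\eps*u$ satisfies $u_\eps(x)\to u^*(x)$ rather than $u^-(x)$ at $\hh$-a.e.\ $x\in J_u\cap\partial\Omega_i$; worse, when $|\Div\A|$ charges $\partial\Omega_i$, the localized integral $\int_{\Omega_i}\varphi\,d\Div(u_\eps\A)$ may fail to converge to $\int_{\Omega_i}\varphi\,d\Div(u\A)$, because roughly half of the approximating singular mass sits on the wrong side of the boundary; a simple one-dimensional example where $u$ and $\A$ both jump across the origin exhibits both pathologies. The inward translation in the construction above pushes the relevant boundary mass off $\partial\Omega_i$ and forces $u_\eps|_{\partial\Omega_i}$ to converge to the inner trace of $u$, thereby correcting both problems at once.
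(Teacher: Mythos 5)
Your route is genuinely different from the paper's: the paper proves \eqref{f:trus} by a pointwise blow-up argument, testing the defining identity of $\Tr(u\A,\partial\Omega_i)$ at $\hh$-a.e.\ $x\in N_i$ against rescaled kernels $\varphi\bigl((\cdot-x)/\eps\bigr)$, subtracting $u^-(x)\Tr(\A,\partial\Omega_i)$, and killing the interior terms via the one-sided $L^1$ blow-up of $u$ and the fact that $|\Div\A|\res\Omega_i$ and $|\Div(u\A)|\res\Omega_i$ have vanishing $(N-1)$-density at $\hh$-a.e.\ point of $\partial\Omega_i$; no approximation of $u$ is used. Your scheme (one-sided mollification $u_\eps$ of $u$, the identity $\Tr(v\A,\partial\Omega_i)=v\,\Tr(\A,\partial\Omega_i)$ for smooth bounded $v$, then a limit passage in \eqref{f:disttr}) is viable and would give the same conclusion, and it has the merit of reducing everything to two convergence statements. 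Your first part ($u\A\in\DMloc$ via \eqref{f:anz}) is fine.

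The gap is that your requirement (i), namely $\int_{\Omega_i}\varphi\,d\Div(u_\eps\A)\to\int_{\Omega_i}\varphi\,d\Div(u\A)$, is exactly where the analysis lives and is asserted rather than proved. It does not follow from the weak-$*$ convergence $\Div(u_\eps\A)\weakstarto\Div(u\A)$, because $\Div(u\A)$ in general charges $\partial\Omega_i$; you need a no-leakage estimate on inner collars $C_\delta:=\{x\in\Omega_i:\ \mathrm{dist}(x,\partial\Omega_i)<\delta\}$, i.e.\ $\lim_{\delta\to0}\limsup_{\eps\to0}|\Div(u_\eps\A)|\bigl(C_\delta\cap\mathrm{supp}\,\varphi\bigr)=0$, and then a cutoff argument splitting $\chi_{\Omega_i}\varphi$ into a part compactly supported in $\Omega_i$ plus a collar part. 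Proving this forces the splitting $\Div(u_\eps\A)=u_\eps\,\Div\A+\A\cdot\nabla u_\eps\,\LLN$, and here your heuristic needs a correction: translating $u$ does \emph{not} move the mass of $\Div\A$, so the shift does nothing for the first term; that term is harmless simply because $C_\delta$ is disjoint from $\partial\Omega_i$ and $|\Div\A|(C_\delta)\to0$ as $\delta\to0$. The shift is what handles the second term: with inward sampling at depth $c\eps$, on $C_\delta$ the function $\nabla u_\eps$ only sees $|Du|$ at depth at least $(c-1)\eps$ inside $\Omega_i$, so its collar mass is controlled by $|Du|$ of a slightly larger interior collar and vanishes (in particular the mass $|Du|\res\partial\Omega_i$ is smeared to the exterior side). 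Two smaller points should also be recorded: the boundary term in \eqref{f:disttr} lives on all of $\partial\Omega_i$, so you need $u_\eps\to$ (interior $BV$ trace of $u$) $\hh$-a.e.\ on the whole of $\partial\Omega_i$, not only on $N_i$, together with a uniform $L^\infty$ bound, before applying dominated convergence; and identifying that interior trace with $u^-$ on $J_u\cap N_i$ uses the standard fact that at $\hh$-a.e.\ point of $J_u\cap\partial\Omega_i$ one has $\nu_u=\pm\nu_{\Omega_i}$, which you invoke only implicitly. With these additions your argument closes; as written, the central convergence is missing.
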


\begin{proof}
The fact that \(u\A\in\DMloc\) has been proved
in \cite[Theorem~3.1]{ChenFrid}.

We will use the same notations of Section~\ref{distrtraces}.
It is not restrictive to assume that \(J_u\) is oriented with \(\nu_\Sigma\)
on \(J_u\cap \Sigma\).

Let us prove \eqref{f:trus} for \(\Tr^-\).
Let \(x\in \Sigma\) satisfy: 
\begin{itemize}
	\item[(a)] \(x\in(\Omega\setminus S_u)\cup J_u\), 
	\(x\in N_i\) for some \(i\), the set \(N_i\) has density \(1\) at \(x\), 
	and \(x\) is a Lebesgue point of \(\Trm{\A}{\Sigma}\)
	with respect to
	\(\hh\res\partial\Omega_i\);
	
	\item[(b)]
	\(|\Div\A| \res\Omega_i (B_\varepsilon(x)) = o(\varepsilon^{N-1})\) as \(\varepsilon\to 0\);
	
	\item[(c)]
	\(|\Div (u\A)| \res\Omega_i  (B_\varepsilon(x)) = o(\varepsilon^{N-1})\).
\end{itemize}
We remark that \(\hh\)-a.e.\ \(x\in \Sigma\) satisfies these conditions.
In particular, (a) is satisfied because 
\(\hh(S_u\setminus J_u) = 0\),
whereas 
(b) and (c) follow from \cite[Theorem~2.56 and (2.41)]{AFP}.

In order to simplify the notation,
in the following we set \(u^-(x) := \ut(x)\) if \(x\in \Omega\setminus S_u\).

Let us choose a function \(\varphi\in C^{\infty}_c(\R^N)\), with support contained
in \(B_1(0)\), such that \(0\leq \varphi \leq 1\).
For every \(\varepsilon > 0\) let \(\varphi_{\varepsilon}(y) := \varphi\left(\frac{y-x}{\varepsilon}\right)\).

By the very definition of normal trace, the following equality holds for
every \(\varepsilon > 0\) small enough:
\begin{equation}\label{f:tra}
\begin{split}
\frac{1}{\varepsilon^{N-1}} &
\int_{\partial\Omega_i} 
[\Tr(u\A, \partial\Omega_i) -
u^-(x) \Tr(\A, \partial\Omega_i)]
\, \varphi_{\varepsilon}(y)\, d\mathcal{H}^{N-1}(y)
\\ = {} &
\frac{1}{\varepsilon^{N-1}}
\int_{\Omega_i} \nabla\varphi_{\varepsilon}(y) \cdot [u(y)\A(y) - u^-(x)\A(y)]\, dy
\\ & + 
\frac{1}{\varepsilon^{N-1}}
\int_{\Omega_i} \varphi_{\varepsilon}(y) \, d[\Div(u\A) - u^-(x) \Div \A](y)\,.
\end{split}
\end{equation}
Using the change of variable \(z = (y-x)/\varepsilon\),
as \(\varepsilon \to 0\) the left hand side of this equality converges to
\[
[\Trm{u\A}{\Sigma}(x) - u^-(x)\Trm{\A}{\Sigma}] \int_{\Pi_x} \varphi(z)\, d\mathcal{H}^{N-1}(z)\,,
\]
where \(\Pi_x\) is the tangent plane to \(\Sigma_i\) at \(x\).
Clearly \(\varphi\) can be chosen in such a way that
\(\int_{\Pi_x} \varphi\, d\H^{N-1} > 0\).

In order to prove \eqref{f:trus} for \(\Tr^-\) it is then enough
to show that the two integrals \(I_1(\varepsilon)\) and \(I_2(\varepsilon)\)
on the right hand side of \eqref{f:tra} converge to \(0\)
as \(\varepsilon \to 0\).

With the change of variables \( z = (y-x) / \varepsilon\) 
we have that
\[
I_1(\varepsilon) =
\int_{\Omega_i^\varepsilon} [ u(x+\varepsilon z) - u^-(x)] \nabla\varphi(z) \cdot
\A(x+\eps z)
\, dz,
\]
where
\[
\Omega_i^\varepsilon := \frac{\Omega_i - x}{\varepsilon}.
\]
As \(\varepsilon\to 0\), these sets locally converge to the half space 
\(P_x := \{z\in\R^N:\ \pscal{z}{\nu(x)} < 0\}\),
hence
\[
\lim_{\varepsilon\to 0}
\int_{\Omega_i^\varepsilon\cap B_1} |u(x+\varepsilon z) - u^-(x)|\, dz  = 
\lim_{\varepsilon\to 0}
\int_{P_x \cap B_1} |u(x+\varepsilon z) - u^-(x)|\, dz  = 0
\]
(see \cite[Remark 3.85]{AFP}) so that
\[
|I_1(\varepsilon)| \leq
\|\A\|_{L^\infty(B_\eps(x))}\, \|\nabla\varphi\|_{\infty} 
\int_{\Omega_i^\varepsilon\cap B_1} |u(x+\varepsilon z) - u^-(x)|\, dz  
\to 0.
\]

\smallskip
From (b) we have that
\[
\lim_{\varepsilon\to 0} \frac{1}{\varepsilon^{N-1}}
\left|\int_{\Omega_i}
\varphi_{\varepsilon}(y)\, u^-(x)\, d \Div\A(y)
\right|
\leq
\limsup_{\varepsilon\to 0}|u^-(x)| \frac{|\Div \A| (B_\varepsilon(x))}{\varepsilon^{N-1}}
=0.
\]
In a similar way, using (c), we get
\[
\lim_{\varepsilon\to 0}
\frac{1}{\varepsilon^{N-1}}
\left|\int_{\Omega_i} \varphi_{\varepsilon} \, d\Div(u\A)\right| = 0,
\]
so that \(I_2(\varepsilon)\)
vanishes as \(\varepsilon\to 0\).

The proof of \eqref{f:tr} for \(\Tr^+\) is entirely similar.	
\end{proof}

Since $J_u$ is a countably $\H^{N-1}$-rectifiable set,
a straightforward consequence of Proposition~\ref{p:tracesb}
is the following result  
(see also \cite[Lemma 2.5]{GiMoPe}).

\begin{corollary}\label{c:traces}
	Let \(\A\in\DMloc\) and \(u\in\BVLloc\).
	Then \(u\A \in \DMloc\) and the normal traces of \(u\A\) on \(J_u\) are given by
	\begin{equation}\label{f:tr}
		\Trace{u\A}{J_u} = u^\pm \Trace{\A}{J_u},
		\qquad \hh-\text{a.e.\ in}\ J_u. 	
	\end{equation}
	In particular
	\begin{equation}\label{f:truA}
		\Div(u\A)\res J_u =
		\left[
		u^+ \Trp{\A}{J_u} - u^- \Trm{\A}{J_u}
		\right]\, \hh \res J_u.
	\end{equation}
\end{corollary}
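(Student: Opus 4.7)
The plan is to deduce the corollary as a direct specialization of Proposition~\ref{p:tracesb} to the choice $\Sigma = J_u$. Recall from Section~2.1 that for any $u\in BV_{{\rm loc}}(\Omega)$ the jump set $J_u$ is countably $\hh$-rectifiable, so it qualifies as an admissible $\Sigma$ in the hypothesis of Proposition~\ref{p:tracesb}. The only preparatory step is to orient $J_u$: the natural choice is to take $\nu_\Sigma := \nu_u$ on $J_u$, which is precisely the orientation with respect to which $u^+$ and $u^-$ are defined in Section~2. The fact that $u\A \in \DMloc$ is already part of Proposition~\ref{p:tracesb} (where it is recorded from \cite{ChenFrid}).

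With $\Sigma = J_u$, every point of $\Sigma$ automatically lies in $J_u \cap \Sigma$, so only the first branch of formula \eqref{f:trus} is ever activated. Reading off this branch gives exactly
\[
\Trace{u\A}{J_u} = u^\pm \Trace{\A}{J_u}, \qquad \hh\text{-a.e.\ in } J_u,
\]
which is \eqref{f:tr}.

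For the ``In particular'' part, I would invoke identity \eqref{mmm} of Section~\ref{distrtraces}, applied to the field $u\A \in \DMloc$ on the rectifiable set $\Sigma = J_u$, to get $\Div(u\A)\res J_u = [\Trp{u\A}{J_u} - \Trm{u\A}{J_u}]\,\hh\res J_u$; substituting the trace identity just obtained yields \eqref{f:truA}. Since the statement is a mechanical consequence of Proposition~\ref{p:tracesb} together with \eqref{mmm}, there is no substantive obstacle; the only point that demands a moment of attention is the compatibility between the orientation intrinsically carried by $J_u$ through $\nu_u$ and the orientation convention adopted in Proposition~\ref{p:tracesb}, but this is precisely how the traces $\Trace{\cdot}{J_u}$ are defined, so no further work is needed.
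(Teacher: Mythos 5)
Your proposal is correct and matches the paper's route exactly: the paper also obtains the corollary by applying Proposition~\ref{p:tracesb} with \(\Sigma = J_u\) (noting that \(J_u\) is countably \(\hh\)-rectifiable and oriented by \(\nu_u\)), and then \eqref{f:truA} follows by inserting \eqref{f:tr} into \eqref{mmm} applied to the field \(u\A\). No gaps; your write-up simply makes explicit the details the paper leaves as a ``straightforward consequence.''
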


We are now ready to prove the main decomposition theorem
for the pairing measure.
We observe that a more general result,
for unbounded $BV$ functions,
will be proved in Theorem~\ref{t:pairing2} below.

\begin{theorem}\label{t:pairing}
Let \(\A\in\DMloc\) and \(u\in\BVLloc\).
Then the measure \((\A, Du)\) admits the following decomposition:
\begin{itemize}
	\item[(i)]
	absolutely continuous part: 
	\((\A, Du)^a = \A \cdot \nabla u\, \LLN\);
	
	\item[(ii)]
	jump part:
	\(\displaystyle
	(\A, Du)^j = 
	\frac{\Trp{\A}{J_u}+\Trm{\A}{J_u}}{2}
	\, (u^+-u^-) \, \hh \res J_u
	\);

	\item[(iii)]
	diffuse part:
	if, in addition, 
	\begin{equation}\label{f:ipo}
	|D^c u| (S_{\A}) = 0,
	\end{equation}
	where \(S_{\A}\) is the approximate discontinuity set  of \(\A\),
	then
	\((\A, Du)^d = \widetilde{\A} \cdot D^d u\).
	
\end{itemize}
\end{theorem}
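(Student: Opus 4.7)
Parts (i) and (ii) I would handle directly. Part (i) is the Chen--Frid result recalled at the end of Section~\ref{ss:div}. For part (ii), I would first note that $(\A,Du)\ll|Du|$ and that neither $|D^au|$ nor $|D^cu|$ charges $J_u$ (because $\Leb{N}(J_u)=0$ and $|D^cu|$ does not charge $S_u\supset J_u$), so $(\A,Du)^j=(\A,Du)\res J_u$. Restricting Anzellotti's identity $(\A,Du)=\Div(u\A)-u^*\Div\A$ from \eqref{f:anz} to $J_u$ and inserting \eqref{f:truA} for $\Div(u\A)\res J_u$, \eqref{mmm} for $\Div\A\res J_u$, and the convention $u^*=(u^++u^-)/2$ on $J_u$, the elementary identity
\[
u^+\Trp{\A}{J_u}-u^-\Trm{\A}{J_u}-\tfrac{u^++u^-}{2}\bigl(\Trp{\A}{J_u}-\Trm{\A}{J_u}\bigr)=\tfrac{u^+-u^-}{2}\bigl(\Trp{\A}{J_u}+\Trm{\A}{J_u}\bigr)
\]
yields the stated formula.

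For part (iii) I would proceed by mollification: set $\A_\eps:=\rho_\eps\ast\A$, so that $\A_\eps\in C^\infty$ and $\|\A_\eps\|_\infty\le\|\A\|_\infty$. A direct integration by parts in \eqref{f:pairing} shows that for the smooth field $\A_\eps$ the Anzellotti pairing coincides with the classical product $\A_\eps\cdot Du$, whose Lebesgue, jump and Cantor components are respectively $\A_\eps\cdot\nabla u\,\Leb{N}$, $\A_\eps\cdot\nu_u(u^+-u^-)\,\hh\res J_u$ and $\A_\eps\cdot\polar_u\,|D^cu|$. I would then test this identity against an arbitrary $\varphi\in C^\infty_c(\Omega)$ and pass to the limit $\eps\to 0$. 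On the left-hand side, $\langle(\A_\eps,Du),\varphi\rangle$ converges to $\langle(\A,Du),\varphi\rangle$: the only delicate term $\int u^*\varphi\,d\Div\A_\eps$ I would rewrite by Fubini and the symmetry of $\rho_\eps$ as $\int(u^*\varphi)\ast\rho_\eps\,d\Div\A$, then conclude using $|\Div\A|\ll\hh$ together with the pointwise convergence $(u^*\varphi)\ast\rho_\eps\to u^*\varphi$ at $\hh$-a.e.\ point (Corollary~3.80 of \cite{AFP}) and dominated convergence. On the right-hand side, the a.c.\ term converges in $L^1$ to $\int\varphi\,\A\cdot\nabla u\,dx$, while the Cantor term converges to $\int\varphi\,\widetilde\A\cdot\polar_u\,d|D^cu|$ by dominated convergence; here the hypothesis $|D^cu|(S_\A)=0$ enters crucially, as it ensures $\A_\eps\to\widetilde\A$ at $|D^cu|$-a.e.\ point.

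By subtraction the jump integral $\int_{J_u}\varphi\,\A_\eps\cdot\nu_u(u^+-u^-)\,d\hh$ must then also converge, and identifying its limit with the expression from part~(ii) is equivalent to the Cantor identity $(\A,Du)^c=\widetilde\A\cdot D^cu$, which would close the proof. The key remaining fact to establish is therefore the pointwise convergence
\[
\A_\eps(x)\cdot\nu_u(x)\longrightarrow\tfrac{1}{2}\bigl(\Trp{\A}{J_u}(x)+\Trm{\A}{J_u}(x)\bigr)\qquad\text{for }\hh\text{-a.e.\ }x\in J_u,
\]
from which dominated convergence with bound $\|\A\|_\infty$ and the $\hh$-integrable weight $|u^+-u^-|$ on $J_u\cap\mathrm{supp}\,\varphi$ finishes the argument. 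This pointwise statement is the main obstacle of the proof: it is a genuinely nontrivial fact about how symmetric mollification of a divergence-measure field recovers the arithmetic mean of the Anzellotti normal traces across a rectifiable set, and unlike the a.c.\ and Cantor parts it cannot be read off from Lebesgue differentiation alone. I would attempt it by arguments similar to those in the proof of Proposition~\ref{p:tracesb}, localising around a Lebesgue point of the traces as in Section~\ref{distrtraces}, reducing to the half-balls $B_r^\pm(x)$ and exploiting the symmetry of $\rho$ to see that $\A_\eps(x)$ weighs the two sides of $J_u$ equally in the limit.
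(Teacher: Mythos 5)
Your treatment of (i) and (ii) is correct and is essentially the paper's own argument: (i) is the Chen--Frid characterization, and (ii) follows by restricting \eqref{f:anz} to \(J_u\) and combining \eqref{f:truA} with \eqref{mmm} and \(u^*=(u^++u^-)/2\) on \(J_u\).

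The genuine gap is in (iii), exactly at the step you yourself flag. Your scheme reduces the Cantor identity to the pointwise convergence \(\A_\eps(x)\cdot\nu_u(x)\to\tfrac12\bigl(\Trp{\A}{J_u}+\Trm{\A}{J_u}\bigr)(x)\) for \(\hh\)-a.e.\ \(x\in J_u\), and none of the tools you invoke can deliver this: \(J_u\) is \(\LLN\)-negligible, so Lebesgue differentiation of the merely bounded field \(\A\) says nothing at \(\hh\)-a.e.\ point of \(J_u\), and the normal traces \(\Trace{\A}{J_u}\) are defined only as \(L^\infty\) functions through the distributional identity \eqref{f:disttr}, not as pointwise limits of \(\A\). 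The symmetry of \(\rho\) only tells you that \(\A_\eps(x)\) averages the two half-balls with equal weight; it does not make either half-ball average converge \(\hh\)-a.e.\ to the corresponding one-sided trace, and the localization argument of Proposition~\ref{p:tracesb} is built precisely to avoid such a pointwise claim (it only controls trace integrals against scaled test functions). What is actually provable is the weaker, weak-* statement that the normal components \(\A_\eps\cdot\nu_u\) converge to \(\alpha^*=[\Trp{\A}{J_u}+\Trm{\A}{J_u}]/2\) in the weak-* sense on \(J_u\): this is the computation carried out in the proof of Proposition~\ref{p:approx}(iii) (which is independent of Theorem~\ref{t:pairing}, so no circularity), and since \(\varphi(u^+-u^-)\in L^1(\hh\res J_u)\) it would suffice to pass to the limit in your jump term, provided you also justify extending the convergence from \(C_c\) test functions to such \(L^1(\hh\res J_u)\) weights via the uniform bound \(\|\A\|_\infty\). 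As written, though, this key step is asserted rather than proved, so your proof of (iii) is incomplete. The paper sidesteps the issue entirely by mollifying \(u\) instead of \(\A\): using Chen--Frid's identity \(\pscal{(\A,Du)}{\varphi}=\lim_\eps\int\varphi\,\A\cdot\nabla u_\eps\,dx\), it computes the Radon--Nikod\'ym derivative of \((\A,Du)\) with respect to \(|D^d u|\) at \(|D^d u|\)-a.e.\ point by a blow-up argument, splitting \(\nabla u_\eps=\rho_\eps\ast D^d u+\rho_\eps\ast D^j u\), discarding the jump contribution through the density condition \(|D^j u|(B_r(x))/|Du|(B_r(x))\to 0\) (valid \(|D^d u|\)-a.e.), and identifying the diffuse contribution via \(\int\phi\,\A\cdot(\rho_\eps\ast D^d u)\,dy=\int(\rho_\eps\ast(\phi\A))\cdot\polar_u\,d|D^d u|\), where hypothesis \eqref{f:ipo} guarantees \(\rho_\eps\ast(\phi\A)\to\phi\widetilde\A\) at \(|D^d u|\)-a.e.\ point; no statement about traces on \(J_u\) is needed there.
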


\begin{remark}\label{r:ipoCantor}
Since \(\LLN(S_{\A}) = 0\),
assumption \eqref{f:ipo} 
is equivalent to \(|D^d u|(S_{\A}) = 0\).
In particular, it is satisfied, for example,
if \(S_{\A}\) is 
$\sigma$--finite with respect to $\H^{N-1}$
(see \cite[Proposition~3.92(c)]{AFP}).
This is always the case if 
\(\A\in BV_{{\rm loc}}(\Omega, \R^N)\cap L^\infty_{{\rm loc}}(\Omega, \R^N)\) and,
notably, if $N=1$.
Another relevant situation for which \eqref{f:ipo} holds
happens when $D^c u = 0$,  
i.e.\ if 
\(u\) is a special function of bounded variation,
e.g.\  
if \(u\) is
the characteristic function of a set of finite perimeter.
Finally, since the set $\jump{\A}$ defined in \eqref{f:jump} in
$\sigma$-finite with respect to $\H^{N-1}$, we remark that assumption \eqref{f:ipo} is equivalent to
$|D^c u| (S_{\A}\setminus \jump{\A}) = 0$.
\end{remark}

\begin{example}
\label{r:Cantor}
Let us show that assumption \eqref{f:ipo} is not always satisfied.
(The following construction has been suggested by
G.E.\ Comi, personal communication.)
Let $C\subset [0,1]$ be the usual Cantor set, obtained
removing at the first step the interval $I_1^1 := (1/3, 2/3)$ from $[0,1]$,
then at the second step the intervals $I_2^1 := (1/9, 2/9)$ and
$I_2^2 := (7/9, 8/9)$ from the two remaining intervals,
and, in general, removing at the $n$-th step
$2^{n-1}$ intervals $I_n^k$, $k=1,\ldots, 2^{n-1}$, of length $3^{-n}$.
Let us consider the set
\[
E := \bigcup_{j=1}^\infty E_{2j},
\quad\text{where}\quad
E_n := \bigcup_{k=1}^{2^{n-1}} I_{n}^k\,.
\]
(In other words, $E$ is the union of the open intervals
removed at even steps.)
It is not difficult to check that $\partial E = C$.

Moreover, we claim that the following (very rough) estimates hold:
\[
\frac{1}{54} \leq
\liminf_{r \searrow 0} \frac{|B_r(x) \cap E|}{2r}
\leq
\limsup_{r \searrow 0} \frac{|B_r(x) \cap E|}{2r}
\leq
\frac{53}{54}\,,
\qquad \forall x\in C.
\]
Namely, let $x\in C$, let $r \in (0, 1/3)$,
and let $N\in\N$ be such that
$3^{-2N-1} \leq r < 3^{-2N+1}$.
Clearly, the interval $B_r(x)$ contains at least one of the intervals
of length $3^{-2N-2}$ removed at step $2N+2$, so that
\[
\frac{|B_r(x) \cap E|}{2r} \geq \frac{3^{-2N-2}}{2\cdot 3^{-2N+1}}
= \frac{1}{54}\,.
\]
A similar argument shows that 
\[
\frac{|B_r(x) \cap E|}{2r}
\leq \frac{53}{54}\,,
\]
and the claim follows.

As a consequence of the above claim,
we have that the approximate discontinuity set of $\chi_E$
coincides with $C$.

Let us consider the vector field
$\A\colon\R^2 \to \R^2$ defined by
$\A(x,y) := (0, \chi_E(x))$.
It is clear that $\A\in L^\infty(\R^2, \R^2)$,
$\Div\A = 0$
and $S_{\A} = C \times \R$.
On the other hand,
if $\psi(x)$ is the standard
Cantor--Vitali function (extended to $0$ for $x<0$ and to $1$ for $x>1$),
then the function $u(x,y) := \psi(x)$
belongs to $\BVLloc[\R^2]$
and 
$|D^c u| (S_{\A} \cap ((0,1)\times(a,b)))
= b-a$,
for every $(a,b)\subset\R$.
\end{example}

\begin{remark}
[\(BV\) vector fields]	
If \(\A \in BV_{\rm loc}(\Omega, \R^N) \cap L^\infty_{{\rm loc}}(\Omega, \R^N)\),
then clearly \(\A \in \DMloc\) and
\[
\Trace{\A}{J_u} = \A^\pm_{J_u}\cdot \nu_u\,, 
\qquad
\text{\(\hh\)-a.e.\ in}\ J_u,
\]
where \(\A^\pm_{J_u}\) are the traces of \(\A\) on \(J_u\)
(see \cite[Theorem~3.77]{AFP}).
Hence, the jump part of \((\A, Du)\) can be written as
\[
(\A, Du)^j = \frac{\A^+ + \A^-}{2}\, \cdot D^j u.
\]
\end{remark}

\begin{proof}[Proof of Theorem~\ref{t:pairing}]
Let \(u_\eps := \rho_\eps \ast u\).
It has been proved in \cite[Theorem 3.2]{ChenFrid}
that
\[
\pscal{(\A, Du)}{\varphi} =
\lim_{\eps \to 0} \pscal{(\A, Du_\eps)}{\varphi}
=
\lim_{\eps\to 0} \int_{\Omega} \varphi \, \A\cdot \nabla u_\eps\, dx,
\qquad
\forall\varphi\in C^\infty_0(\Omega)
\]
and that (i) holds.
We remark that, if 
\(K \Subset U\subset\overline{U}\Subset\Omega\) with \(U\) open, then
\[
|(\A, Du)| (K) \leq \|\A\|_{L^\infty(U)}\, |Du| (U),
\]
hence, in particular
\[
	|(\A, Du)| (E) \leq \|\A\|_{L^\infty(U)}\, |Du| (E)
\qquad
\text{for every Borel set}\ E\subset U.
\]

It remains to prove (ii) and (iii).
In order to simplify the notation, let us denote
\(\mu := (\A, Du)\).

\smallskip
\textsl{Proof of (ii)}.
Since \((\A, Du) \ll |Du|\), it is clear that
\((\A, Du)^j\) is supported in \(J_u\).
From \eqref{f:anz} and \eqref{f:truA} we have that
\[
\begin{split}
(\A, Du)^j = {} & (\A, Du) \res J_u 
= \Div(u\A)\res J_u - u^* \Div\A \res J_u
\\ 
= {} &
\left[
u^+ \Trp{\A}{J_u} - u^- \Trm{\A}{J_u}
\right] \hh\res J_u
\\ & -
\frac{u^+ + u^-}{2}
\left[
\Trp{\A}{J_u} - \Trm{\A}{J_u}
\right] \hh\res J_u
\\ = {} &
\frac{\Trp{\A}{J_u}+\Trm{\A}{J_u}}{2}
\, (u^+-u^-) \, \hh \res J_u,
\end{split}
\]
and the proof is complete.

\smallskip
\textsl{Proof of (iii)}.
Let us consider the polar decomposition 
\(D u = \polar_u \, |Du|\) of \(Du\).
By assumption \eqref{f:ipo}, the approximate limit \(\widetilde{\A}\)
of \(\A\) exists \(|D^d u|\)-a.e.\ in \(\Omega\).
Hence, the equality in (iii) is equivalent to
\[
\frac{d\mu}{d|D^du|}(x) = \frac{d\mu^{d}}{d|D^du|}(x)= \widetilde{\A}(x) \cdot \polar_u(x)
\qquad
\text{for $|D^d u|$-a.e.\ $x\in \Omega$}.
\]

Let us choose $x\in \Omega$ such that
\begin{itemize}
	\item[(a)]
	\(x\) belongs to the support of \(D^d u\), that is
	\(|D^d u|(B_r(x)) > 0\) for every \(r >0\);
	\item[(b)] 
	there exists the limit
	\(\displaystyle
	\lim_{r\to 0}\frac{\mu^d(B_r(x))}{|D^d u|(B_r(x))};
	\)
	\item[(c)]
	\(\displaystyle
	\lim_{r\to 0}\frac{|D^j u|(B_r(x))}{|D u|(B_r(x))}=0;
	\)
	\item[(d)]
	\(\displaystyle
	\lim_{r\to 0}\frac1{|D^du|(B_r(x))}\int_{B_r(x)}\left|
	\widetilde{\A}(y)\cdot\theta_u(y) - \widetilde{\A}(x)\cdot\polar_u(x)
	\right|\,d|D^d u|(y)=0.
	\)
\end{itemize}
We remark that these conditions are satisfied
for $|D^d u|$-a.e.\ $x\in\Omega$. 

Let $r>0$ be such that
\begin{equation}
\label{gtgt8}
|D^ju|\left(\partial B_r(x)\right)=0.
\end{equation}
Observe that
\(\nabla u_ \varepsilon = \rho_ \varepsilon \ast Du=
\rho_ \varepsilon \ast D^d u+\rho_ \varepsilon \ast D^ju\).
Hence for every $\phi\in C_0(\R^N)$ with support in \(B_r(x)\) it holds
\begin{equation}\label{f:diseq}
\begin{split}
&\Bigg|\frac1{|D^d u|(B_r(x))}\int_{B_r(x)}\phi(y)
\A(y)\cdot\rho_\eps\ast Du (y) \, dy
\\ & \quad 
-
\frac1{|D^d u|(B_r(x))}\int_{B_r(x)}\phi(y)
\widetilde{\A}(x)\cdot\polar_u(x)   \,d|D^d u|(y)\Bigg|\\
& \leq
\Bigg|\frac1{|D^d u|(B_r(x))}\int_{B_r(x)}\phi(y)
\A(y)\cdot \rho_\eps \ast D^d u(y)\, dy
\\ & \quad -
\frac1{|D^d u|(B_r(x))}\int_{B_r(x)}\phi(y)
\widetilde{\A}(x) \cdot \polar_u(x) \,d|D^d u|(y)\Bigg|
\\ & \quad 
+ \frac1{|D^d u|(B_r(x))}\|\phi\|_\infty \|\A\|_{L^\infty(B_r(x))}
\int_{B_r(x)}\rho_ \varepsilon \ast |D^ju|\,dy,
\end{split}
\end{equation}
where in the last inequality we use that 
$\left|\rho_ \varepsilon \ast D^ju\right|\leq \rho_ \varepsilon \ast |D^ju|$.

We note that by \eqref{gtgt8}
\[
\lim_{\varepsilon\to 0}\int_{B_r(x)}\rho_ \varepsilon \ast |D^j u|\,dy= 
|D^j u| (B_r(x)).
\]
Furthermore,
\[
\int_{B_r(x)}\phi(y)
\A(y)\cdot \rho_\eps \ast D^d u(y)\, dy 
= \int_{B_r(x)}
[\rho_\eps \ast (\phi \A)](y)\cdot\theta_u(y)\, d|D^du|(y).
\]
Hence by taking the limit as $\varepsilon\to 0$ in \eqref{f:diseq} we obtain
\[
\begin{split}
&\Bigg|\frac1{|D^d u|(B_r(x))}\int_{B_r(x)}\phi(y)\ d\mu(y)\\&-
\frac1{|D^d u|(B_r(x))}\int_{B_r(x)}\phi(y)
\widetilde{\A}(x)\cdot\polar_u(x)
\,d|D^d u|(y)\Bigg|\\
&\leq 
\frac1{|D^d u|(B_r(x))}\int_{B_r(x)}\phi(y)
\left|
\widetilde{\A}(y)\cdot\polar_u(y) - \widetilde{\A}(x)\cdot\polar_u(x)
\,d|D^d u|(y)\right|\\
&\quad +
\frac1{|D^d u|(B_r(x))}\|\phi\|_\infty \|\A\|_{L^\infty(B_r(x))}\,|D^j u|(B_r(x)).
\end{split}
\]
When $\phi(y)\to 1$ in $B_r(x)$, with \(0\leq \phi \leq 1\), we get
\[
\begin{split}
&\left|\frac{\mu(B_r(x))}{|D^d u|(B_r(x))}- 
\widetilde{\A}(x)\cdot\polar_u(x)
\right|
\\
& \leq
\frac1{|D^d u|(B_r(x))}\int_{B_r(x)}
\left|
\widetilde{\A}(y)\cdot\polar_u(y) -
\widetilde{\A}(x)\cdot\polar_u(x)
\right|\, d|D^d u|(y)\\
& \quad +
\frac1{|D^d u|(B_r(x))} \|\A\|_{L^\infty(B_r(x))} |D^j u|(B_r(x)).
\end{split}
\]
The conclusion is achieved now by taking $r\to 0$ and by using (c) and (d).
\end{proof}

\begin{example}[Computation of weak normal traces]
For illustrative purposes,
in this example we shall explicitly compute the
weak normal traces of a vector field $\A$ and
of the product $u\A$.

Let \(\A\colon\R^2\to\R^2\) be the vector field defined by
\(\A(x_1,x_2) = (1,0)\) if \(x_1 > 0\),
\(\A(x_1,x_2) = (-1,0)\) if \(x_1 < 0\).
Clearly \(\A\in\DM\) and \(\Div \A = 2 \H^1\res S\), where \(S := \{0\}\times\R\).

Let \(E := (0,1)\times (0,1)\) and let \(u := \chi_E \in BV(\R^2)\).
Let us choose on \(J_u = \partial E\) the 
orientation given by the
interior unit normal \(\nu\)
to \(E\), so that \(u^+= 1\) and \(u^- = 0\) on \(\partial E\).

Let us compute the normal traces 
\(\alpha^\pm := \Trace{\A}{J_u}\) of \(\A\) on \(J_u\),
using the construction described in Section~\ref{distrtraces}.
Let \(\partial E = J_u = S_1 \cup S_2 \cup S_3 \cup S_4\),
where
\[
S_1 = \{0\}\times [0,1],\
S_2 = [0,1]\times \{1\},\
S_3 = \{1\}\times [0,1],\
S_4 = [0,1]\times \{0\}.
\]

Let us start with the computation of the normal traces on \(S_1\).
We can construct two open domains \(\Omega\) and \(\Omega'\) of class \(C^1\),
such that \(\Omega \subset \{x_1 < 0\}\), \(\Omega'\subset\{x_1 > 0\}\),
and \(S_1 \subset \partial\Omega \cap \partial\Omega'\).
Indeed, with this choice we have
\[
\nu = \nu_\Omega = (1,0) = - \nu_{\Omega'} \qquad
\text{on}\ S_1.
\]
(Recall that \(\nu_\Omega\) is by definition the outward normal vector to \(\Omega\).)
We thus have
\[
\alpha^- := \Trace[]{\A}{\partial\Omega} = -1,
\quad
\alpha^+ := -\Trace[]{\A}{\partial\Omega'} = 1,
\qquad \text{on}\ S_1.
\]
With similar constructions we get \(\alpha^\pm = -1\) on \(S_3\)
and \(\alpha^\pm = 0\) on \(S_2 \cup S_4\), so that
\[
\alpha^* := \frac{\alpha^+ + \alpha^-}{2} =
\begin{cases}
-1, &\text{on}\ S_3,\\
0, & \text{on}\ S_1\cup S_2 \cup S_4.
\end{cases}
\]
We can now check the validity of the relation
\[
\Div(u\A) = u^*\, \Div \A + (\A, Du),
\]
where \((\A, Du) = (u^+ - u^-) \alpha^*\, \H^1\res {J_u}\)
(in this case the measure \((\A, Du)\) does not have a diffuse part).
Indeed, we have
\[
\Div(u\A) = \H^1\res {S_1} - \H^1\res {S_3},\quad
u^*\, \Div\A = \H_1\res {S_1}, \quad
(u^+ - u^-) \alpha^*\, \H^1\res {J_u} = - \H^1\res S_3.
\]
By the way, observe that \(u\A = u \boldsymbol{C}\), where \(\boldsymbol{C}\)
is the constant vector field \(\boldsymbol{C} \equiv (1,0)\) on \(\R^2\).
In this case the normal traces \(\gamma^\pm\) of \(\boldsymbol{C}\) on \(J_u\)
are \(\gamma^\pm = 1\) on \(S_1\),
\(\gamma^\pm = -1\) on \(S_3\),
\(\gamma^\pm = 0\) on \(S_2\cup S_4\),
hence
\[
u^* \Div \boldsymbol{C} = 0,
\quad
(u^+ - u^-)\, \gamma^*\, \H^1\res J_u
= \H^1\res {S_1} - \H^1\res {S_3}\,.
\]
\end{example}

\section{Chain rule, coarea and Leibniz formulas}\label{s:form}

In this section
we will prove a very general coarea formula
(see Theorem~\ref{t:coarea}).
As a consequence,
we will prove
the chain rule formula for the pairing $(\A, Dh(u))$
(see Proposition~\ref{p:comp}),
and the Leibniz formula
for $(\A, D(uv))$ and $(v\A, Du)$
(see Propositions~\ref{p:uA} and~\ref{p:leibniz}).

Finally, we will prove an approximation result 
by regular vector fields
(see Proposition~\ref{p:approx})
and a semicontinuity result
(see Proposition~\ref{approx2}).

\medskip
Since the measure $(\A, Du)$ is absolutely continuous with respect to $|Du|$,
then
\begin{equation}\label{theta}
(\A, Du) = \theta(\A,Du,x) \, {|Du|},
\end{equation}
where \(\theta(\A, Du, \cdot)\) denotes
the Radon--Nikod\'ym derivative of $(\A, Du)$ with respect to $|Du|$.

Let $Du = \polar_u |Du|$ be the polar decomposition of $Du$.
From Theorem~\ref{t:pairing}, if \(|D^c u|(S_{\A}) = 0\) it holds
\begin{equation}
\label{f:theta}
\theta(\A, Du, x) =
\begin{cases}
\pscal{\widetilde{\A}(x)}{\polar_u(x)},
& \text{for \(|D^d u|\)-a.e.}\ x\in\Omega,\\
\alpha^*(x) \sign(u^+(x)-u^-(x)),
& \text{for \(\H^{N-1}\)-a.e.}\ x\in J_u,
\end{cases}
\end{equation}
where $\alpha^* := [\Trp{\A}{J_u}+\Trm{\A}{J_u}]/2$.
 
\begin{remark}
If $\Div \A \in L^1(\Omega)$
and $u\in BV(\Omega)\cap L^\infty(\Omega)$, then
$\Trp{\A}{J_u}=\Trm{\A}{J_u}$ $\H^{N-1}$-a.e.\ in $J_u$.
Moreover,
Anzellotti has proved in \cite[Theorem~3.6]{Anz2}
that
\[
\theta(\A, Du, x) = q_{\A} (x, \polar_u(x))
\qquad
\text{for $|Du|$-a.e.}\ x\in\Omega,
\] 
where, for every $\zeta\in S^{N-1}$,
\[
q_{\A}(x, \zeta)
:= \lim_{\rho\downarrow 0}
\lim_{r\downarrow 0}
\frac{1}{\LLN(C_{r,\rho}(x, \zeta))}
\int_{C_{r,\rho}(x, \zeta)} \A(y) \cdot \zeta\, dy
\]
with
\[
C_{r,\rho}(x, \zeta) :=
\left\{
y\in\R^N:\ 
|(y-x)\cdot\zeta| < r,\
|(y-x) - [(y-x)\cdot\zeta]\zeta| < \rho
\right\}
\]
(the existence of the limit in the definition of $q_{\A}(x, \polar_u(x))$
for $|Du|$-a.e.\ $x\in\Omega$ is part of the statement).
By using \eqref{f:theta} in this framework, we can conclude that if $\Div \A \in L^1(\Omega)$ and 
\(|D^c u|(S_{\A}) = 0\), then we have
\[
\pscal{\widetilde{\A}(x)}{\polar_u(x)} = q_{\A}(x, \polar_u(x))
\qquad
\text{for $|D^d u|$-a.e.}\ x\in\Omega.
\]
Finally, we remark that,
when $\A$ is a $W^{1,1}(\Omega;\R^N)$ vector field, then $\Div \A \in L^1(\Omega)$
and \(|D^c u|(S_{\A}) = 0\).
\end{remark}

\begin{theorem}[Coarea formula]\label{t:coarea}
	Let $\A\in\DMloc[\Omega]$, let 
	$u\in BV_{{\rm loc}}(\Omega)$
	and assume that
	$u^*\in L^1_{\rm{loc}}(\R^N,\Div \A)$.
	Then
	\begin{equation}\label{f:split}
	\pscal{(\A, Du)}{\varphi} = \int_{\R} \pscal{(\A, D\chiut)}{\varphi}\, dt,
	\qquad\forall \varphi\in C_c(\Omega)
	\end{equation}
	and, for any Borel set \(B\subset\Omega\),
	\begin{equation}\label{f:split2}
	(\A, Du)(B) = \int_{\R} (\A, D\chiut)(B)\, dt.
	\end{equation}
	Furthermore, for \(\mathcal{L}^1\)-a.e.\ \(t\in\R\),
	\begin{equation}
	\label{f:thetaut}
	\theta(\A, Du, x) =
	\theta(\A, D\chiut, x)
	\qquad
	\text{for \(|D\chiut|\)-a.e.}\ x\in\Omega.
	\end{equation}
\end{theorem}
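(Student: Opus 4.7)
The plan is to proceed in three stages: establish the weak identity \eqref{f:split}, upgrade it to the measure identity \eqref{f:split2}, and finally extract the pointwise Radon--Nikod\'ym identity \eqref{f:thetaut}. The first two stages are relatively soft; the last is where the real work lies.

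To prove \eqref{f:split}, I would substitute into the defining formula \eqref{f:pairing} the layer-cake representation
\[
u(x)=\int_0^{+\infty}\chiut(x)\,dt-\int_{-\infty}^{0}\bigl(1-\chiut(x)\bigr)\,dt,
\]
valid pointwise, and the analogous formula for $u^*$, valid at $\hh$-a.e.\ $x$ (a direct case analysis at continuity points, at jump points, and on the $\hh$-null remainder $S_u\setminus J_u$). Fubini's theorem applies to both pieces: the integral against $\A\cdot\nabla\varphi$ is absolutely integrable because $u\in L^1_{\rm loc}$ and $\nabla\varphi$ has compact support, while the integral against $d\Div\A$ is absolutely integrable by the hypothesis $u^*\in L^1_{\rm loc}(\R^N,\Div\A)$. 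The ``constant $1$'' contributions coming from the $(1-\chiut)$ terms combine into $-\int\varphi\,d\Div\A-\int\A\cdot\nabla\varphi\,dx$, which vanishes by the very definition of distributional divergence on $C^\infty_c$, and the remaining pieces reassemble into $\int_\R \pscal{(\A,D\chiut)}{\varphi}\,dt$.

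To derive \eqref{f:split2}, I would define the candidate measure $\mu(B):=\int_\R (\A,D\chiut)(B)\,dt$ on precompact Borel $B\subset\Omega$. The local bound $|(\A,D\chiut)|(B)\le\|\A\|_{L^\infty(U)}|D\chiut|(B)$ for $B\Subset U\Subset\Omega$, recalled in the proof of Theorem~\ref{t:pairing}, together with the classical $BV$ coarea formula $|Du|(B)=\int_\R |D\chiut|(B)\,dt$, gives $|\mu|(B)\le\|\A\|_{L^\infty(U)}|Du|(B)<+\infty$, so $\mu$ is a well-defined Radon measure on $\Omega$. Fubini and \eqref{f:split} then force $\int\varphi\,d\mu=\pscal{(\A,Du)}{\varphi}$ for every $\varphi\in C_c(\Omega)$, whence $\mu=(\A,Du)$.

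The substantial work is \eqref{f:thetaut}. The obstacle is that \eqref{f:split2} tested against $\chi_B$ only yields $\int_\R g_B(t)\,dt=0$ for $g_B(t):=\int_B[\theta(\A,Du)-\theta(\A,D\chiut)]\,d|D\chiut|$, which does not force $g_B\equiv0$ pointwise. To gain freedom in the $t$-variable I would rerun the argument of the first stage with $v:=G(u)$ in place of $u$, where $G(s):=\int_{-\infty}^{s}g(\tau)\,d\tau$ for a bounded Borel $g$: the same layer-cake and Fubini computation produces the weighted identity $(\A,DG(u))=\int_\R g(t)(\A,D\chiut)\,dt$ as Radon measures. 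Specialising to $g=\chi_T$ with $T=(a,b)$ a bounded interval, $G_T(u)=(u\vee a)\wedge b - a$ is a Lipschitz truncation of $u$, and a direct inspection of super-level sets together with $BV$ coarea for $G_T(u)$ yields
\[
\int_T(\A,D\chiut)(B)\,dt=(\A,DG_T(u))(B),\qquad
\int_T|D\chiut|(B)\,dt=|DG_T(u)|(B).
\]
The crucial claim is then $\theta(\A,DG_T(u))=\theta(\A,Du)$ $|DG_T(u)|$-a.e. Since $\polar_{G_T(u)}=\polar_u$ $|DG_T(u)|$-a.e.\ by the standard $BV$ chain rule (truncation preserves the direction of the absolutely continuous, jump and Cantor parts of $Du$), and since a direct computation from the definition \eqref{f:pairing} shows that $\theta(\A,DG_T(u))$ is determined by the same geometric data on $\mathrm{supp}\,|DG_T(u)|\subset\mathrm{supp}\,|Du|$ that define $\theta(\A,Du)$ there, the identification goes through without recourse to formula \eqref{f:theta} or the Cantor hypothesis \eqref{f:ipo}. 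Combining these ingredients produces
\[
\int_T\!\!\int_B\bigl[\theta(\A,Du)-\theta(\A,D\chiut)\bigr]\,d|D\chiut|(x)\,dt=0
\]
for every bounded Borel $T\subset\R$ and every precompact Borel $B\subset\Omega$. Letting $B$ and $T$ range over countable generating families of the Borel $\sigma$-algebras of $\Omega$ and $\R$, respectively, and invoking Fubini, yields \eqref{f:thetaut}. I expect the identification $\theta(\A,DG_T(u))=\theta(\A,Du)$, obtained without the Cantor hypothesis, to be the main technical point of the proof.
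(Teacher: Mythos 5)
Your first two stages are correct and essentially coincide with the paper's proof. The paper also expands $u$ and $u^*$ by layer cake and applies Fubini, quoting \cite[Lemma~2.2]{DCFV2} for the representation $\chiut[*]=\tfrac12\bigl(\chi_{\{u^->t\}}+\chi_{\{u^+>t\}}\bigr)$ valid $|\Div\A|$-a.e.\ for a.e.\ $t$, where you propose a case analysis; since $|\Div\A|\ll\hh$ is a Radon measure, your analysis (continuity points, jump points, plus a Fubini argument with respect to $|\Div\A|\otimes\Leb{1}$ to discard the levels $t\in\{u^-(x),u^+(x)\}$) is a legitimate substitute. The upgrade from the distributional identity \eqref{f:split} to the measure identity \eqref{f:split2} is the same in both arguments, and handling unbounded $u$ by a two-sided layer cake rather than by truncation is immaterial.

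The gap is in Stage 3, exactly at the point you yourself flag: the identity $\theta(\A,DG_T(u),x)=\theta(\A,Du,x)$ for $|DG_T(u)|$-a.e.\ $x$ is asserted, not proved. Equality of the polar vectors $\polar_{G_T(u)}=\polar_u$ does not by itself give it: $\theta(\A,Dv,\cdot)$ is the Radon--Nikod\'ym density of the measure $(\A,Dv)$, and that it depends only on $(x,\polar_v(x))$ is precisely what would have to be shown. On the absolutely continuous and jump parts of $DG_T(u)$ the identification does follow from Theorem~\ref{t:pairing}(i)--(ii), because the normal traces of $\A$ on a rectifiable set do not depend on the paired function; but on the Cantor part no characterization of the pairing is available without hypothesis \eqref{f:ipo} --- and Theorem~\ref{t:coarea} is stated without \eqref{f:ipo} --- so ``a direct computation from the definition \eqref{f:pairing}'' has nothing to compute with there. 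Inside the paper your claim is exactly Proposition~\ref{p:comp}(iii) for the non-decreasing function $G_T$, and it is proved there \emph{as a consequence of} the coarea identity \eqref{f:thetaut} (following Anzellotti and \cite{LaSe}); so your route is circular unless you supply an independent proof of this step. Note also that some structural input is unavoidable: from the integrated relation $\int_T\int_B[\theta(\A,Du)-\theta(\A,D\chiut)]\,d|D\chiut|\,dt=0$ alone one cannot conclude, since for an abstract family of measures $\nu_t$ with $\int_\R\nu_t\,dt=|Du|$ (for instance all charging a common atom) cancellation in $t$ is possible. The paper instead deduces \eqref{f:thetaut} directly from \eqref{f:split2} together with the coincidence of $dDu/d|Du|$ and $dD\chiut/d|D\chiut|$ for a.e.\ $t$, without passing through any chain-rule statement; as written, your proof of \eqref{f:thetaut} is therefore incomplete.
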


\begin{remark}\label{lincei33}
	Formulas \eqref{f:split} and \eqref{f:split2} have been proved by Anzellotti
	(see \cite[Proposition~2.7]{Anz}) for
	\(u\in BV(\Omega)\) and
	\(\A\in L^\infty(\Omega, \R^N)\) with \(\Div \A \in L^N(\Omega)\).
	Moreover they have been proved in \cite[Propositions 2.4 and 2.5]{LaSe}
	when \(D^j u = 0\).
\end{remark}

\begin{proof}
	Let us first consider the case \(u\in L^\infty(\Omega)\).
	By possibly replacing \(u\) with \(u +\|u\|_\infty\),
	it is not restrictive to assume that \(u\geq 0\)

	Let us fix a test function \(\varphi\in C^\infty_c(\Omega)\).
	From the definition \eqref{f:pairing} of the pairing, we have that
	\begin{equation}
	\label{f:I1I2}
	\begin{split}
	\int_{\R} \pscal{(\A, D\chiut)}{\varphi}\, dt  = {} &
	-\int_0^{+\infty} \left(
	\int_{\Omega} \chiut[*] \varphi\, d\Div\A
	\right)\, dt 
	\\ & -
	\int_0^{+\infty} \left(
	\int_{\Omega} \chiut \A\cdot\nabla\varphi\, dx
	\right)\, dt
	=: -I_1 - I_2. 
	\end{split}
	\end{equation}
	The integral \(I_2\) can be immediately computed as
	\begin{equation}\label{f:I2}
	I_2 = \int_{\Omega} u\, \A\cdot\nabla\varphi\, dx.
	\end{equation}
	
	The first integral \(I_1\) requires more care.
	From \cite[Lemma~2.2]{DCFV2} we have that,
	for \(\mathcal{L}^1\)-a.e.\ \(t\in\R\),
	there exists a Borel set \(N_t\subset\Omega\), with \(\hh(N_t) = 0\),
	such that
	\[
	\forall x\in \Omega\setminus N_t:
	\qquad
	\chiut[*](x) =
	\begin{cases}
	1, &\text{if}\ u^-(x) > t,\\
	0, & \text{if}\ u^+(x) < t,\\
	1/2, & \text{if}\ u^-(x) \leq t \leq u^+(x)\,.
	\end{cases}
	\]
	Since \(|\Div\A| \ll \hh\), we deduce that,
	for \(\mathcal{L}^1\)-a.e.\ \(t\in\R\),
	\begin{equation}\label{f:chis}
	\chiut[*](x) = \frac{\chi_{\{u^- > t\}}(x) + \chi_{\{u^+ > t\}}(x)}{2}\,,
	\qquad
	\text{for \(|\Div\A|\)-a.e.}\ x\in\Omega.
	\end{equation}
	From \eqref{f:chis}, we can rewrite \(I_1\) in the following way:
	\begin{equation}
	\label{f:I1}
	\begin{split}
	I_1 & = 
	\int_0^{+\infty} \int_{\Omega}\left(
	\frac{\chi_{\{u^- > t\}} + \chi_{\{u^+ > t\}}}{2}\, \varphi\, d\Div\A
	\right)\, dt
	\\ & =
	\int_\Omega \frac{u^- + u^+}{2} \, \varphi\, d\Div\A
	= \int_\Omega u^* \, \varphi\, d\Div\A\,.
	\end{split}
	\end{equation}
	Hence, from \eqref{f:I1I2}, \eqref{f:I2}, \eqref{f:I1}  and
	the definition \eqref{f:pairing} of \((\A, Du)\),
	we conclude that \eqref{f:split} holds
	for every test function \(\varphi\in C^\infty_c(\Omega)\).
	On the other hand, since both sides in \eqref{f:split} are
	measures in \(\Omega\), they coincide not only
	as distributions, but also as measures.
	Hence \eqref{f:split} and \eqref{f:split2} follow.
	
	Since, for \(\mathcal{L}^1\)-a.e.\ \(t\in\R\), it holds
	\[
	\frac{d Du}{d|Du|} = \frac{d D\chiut}{d|D\chiut|}
	\qquad
	\text{\(|D\chiut|\)-a.e.\ in}\ \Omega,
	\]
	we conclude that \eqref{f:thetaut} follows.
	
	Finally, the general case $u^*\in L^1_{\rm{loc}}(\R^N,\Div \A)$ follows
	using the previous step on the truncated
	functions \(u_k := T_k(u)\),
where, given
$k>0$, $T_k$ is defined by
\begin{equation}\label{trun}
T_k(s) := \max\{\min\{s, k\}, -k\}
\,,
\qquad s\in\R.
\end{equation}
Since $T_k$ is a Lipschitz continuous function, we get that
\[
u_k\in BV_{{\rm loc}}(\Omega) \cap L^\infty(\Omega),
\quad 
u_k^\pm = T_k(u^\pm), 
\quad 
|D u_k|\leq |Du|\ \text{in the sense of measures}.
\] 
Then $|u_k^\pm|\leq |u^\pm|$ and 
$|u_k^*|\leq |u^*|$, which implies that $u_k^*\in L^1_{\rm{loc}}(\Omega,\Div \A)$. 
\end{proof}

\begin{remark}[Representation of \(\theta(\A, Du, x)\)]
Let $\A\in\DMloc[\Omega]$ and let $u\in \BVLloc[\Omega]$. 
If \(E\Subset\Omega\) is a set of finite perimeter, then
\(|D\chi_E | = \hh\res\partial^*E\) hence,
by Theorem~\ref{t:pairing}, we have that
\[
(\A, D\chi_E) = \frac{\Trp{\A}{\partial^*E} + \Trm{\A}{\partial^*E}}{2}\,
|D\chi_E|,
\]
that is
\[
\theta(\A, D\chi_E, x) = \frac{\Trp{\A}{\partial^*E} + \Trm{\A}{\partial^*E}}{2}
\qquad
\text{for \(\hh\)-a.e.}\ x\in\partial^*E.
\]
Since, for \(\mathcal{L}^1\)-a.e.\ \(t\in\R\), the set
\(E_{u,t} := \{u>t\}\) is of finite perimeter, then from \eqref{f:thetaut} we deduce that,
for these values of \(t\),
\[
\theta(\A, Du, x) = \frac{\Trp{\A}{\partial^* E_{u,t}} + \Trm{\A}{\partial^*E_{u,t}}}{2}
\qquad
\text{for \(\hh\)-a.e.}\ x\in\partial^*E_{u,t}.
\]
\end{remark} 
 
\begin{proposition}[Chain Rule]\label{p:comp} 
	Let $\A\in\DMloc[\Omega]$ and let $u\in \BVLloc[\Omega]$. 
	Let $h:\R\to\R$ be a locally Lipschitz function.
	Then the following properties hold:
	\begin{itemize}
		\item[(i)]
		\((\A, Dh(u))^a = h'(\ut)\, \A \cdot \nabla u \, \LLN\) and,
		if \(|D^c u|(S_{\A}) = 0\), then
		\((\A, Dh(u))^d = h'(\ut)\, (\A, Du)^d\);
		\item[(ii)]
		\(\displaystyle
		(\A, Dh(u))^j = \frac{h(u^+) - h(u^-)}{u^+-u^-} (\A, Du)^j;\)
		\item[(iii)]
		if \(h\) is non-decreasing, then
		\begin{equation}\label{thetacomp}
		\theta(\A,Dh(u),x)=\theta(\A,Du,x),
		\qquad \text{for \(|Dh(u)|\)-a.e.}\ x\in\Omega.
		\end{equation} 
	\end{itemize}
\end{proposition}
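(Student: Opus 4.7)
My approach would be to combine the classical Vol'pert BV chain rule \cite{AFP} with the decomposition Theorem~\ref{t:pairing} applied to the composition $h(u)$. Since $u\in\BVLloc$ and $h$ is locally Lipschitz, $h(u)\in\BVLloc$ with $\nabla h(u)=h'(\ut)\nabla u$ almost everywhere, $D^c h(u)=h'(\ut)D^c u$ as measures, $J_{h(u)}\subseteq J_u$ up to $\hh$-null sets, and $D^j h(u)=(h(u^+)-h(u^-))\nu_u\hh\res J_u$. Part~(i) is then almost immediate: Theorem~\ref{t:pairing}(i) applied to $h(u)$, combined with the chain rule, yields $(\A,Dh(u))^a=h'(\ut)\A\cdot\nabla u\,\LLN$. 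For the diffuse formula, the identity $|D^c h(u)|=|h'(\ut)|\,|D^c u|$ transfers the hypothesis $|D^c u|(S_{\A})=0$ to $h(u)$, and Theorem~\ref{t:pairing}(iii) then gives $(\A,Dh(u))^c=\widetilde{\A}\cdot D^c h(u)=h'(\ut)\,\widetilde{\A}\cdot D^c u=h'(\ut)\,(\A,Du)^c$, which combined with the absolutely continuous case proves the diffuse formula.

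For part~(ii), I would apply Theorem~\ref{t:pairing}(ii) directly to $h(u)$, orienting $J_{h(u)}$ via $\nu_u$. The crucial observation is that the normal traces of Section~\ref{distrtraces} depend only on the underlying rectifiable set and its orientation via the localization property \cite[Proposition~3.2]{AmbCriMan}, so $\Trace{\A}{J_{h(u)}}=\Trace{\A}{J_u}$ $\hh$-a.e.\ on $J_{h(u)}$. Because $h(u^+)=h(u^-)$ on $J_u\setminus J_{h(u)}$, the expression extends trivially to $J_u$, giving $(\A,Dh(u))^j=\alpha^*\,(h(u^+)-h(u^-))\,\hh\res J_u$ with $\alpha^*:=(\Trp{\A}{J_u}+\Trm{\A}{J_u})/2$, and dividing by $u^+-u^-\ne 0$ reproduces the stated formula.

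Part~(iii) cannot be attacked in the same manner, since the Cantor hypothesis is absent. The natural tool is the coarea formula (Theorem~\ref{t:coarea}), which asserts that for $\LLU$-a.e.\ $t$, $\theta(\A,Du,x)=\theta(\A,D\chi_{\{u>t\}},x)$ for $|D\chi_{\{u>t\}}|$-a.e.\ $x$, and analogously for $h(u)$ with parameter $s$. For non-decreasing continuous $h$ and $\LLU$-a.e.\ $s$, the level set $\{h(u)>s\}$ coincides, up to a $|Du|$-null set, with $\{u>t(s)\}$, where $t(s):=\sup\{t:h(t)\le s\}$ is the right pseudo-inverse of $h$. Combining these facts gives $\theta(\A,Dh(u),x)=\theta(\A,Du,x)$ for $|D\chi_{\{h(u)>s\}}|$-a.e.\ $x$ and $\LLU$-a.e.\ $s$, and integrating via the coarea decomposition of $|Dh(u)|$ delivers the equality $|Dh(u)|$-a.e. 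The main obstacle here is the measure-theoretic matching between exceptional sets in the $t$- and $s$-parameters when $h$ has flat intervals; the Lipschitz property of $h$ (which sends null sets to null sets) is the key ingredient ensuring that the coarea identifications for $u$ and for $h(u)$ are compatible under the change of parameter.
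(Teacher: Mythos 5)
Your argument is correct and follows essentially the same route as the paper: parts (i) and (ii) are obtained, exactly as in the paper, by applying the BV chain rule of Vol'pert (\cite[Theorem~3.99]{AFP}), the identity \((h(u))^\pm = h(u^\pm)\), and the decomposition of Theorem~\ref{t:pairing} to the composition \(h(u)\) (your observations that \(|D^c h(u)|=|h'(\ut)|\,|D^c u|\) transfers the hypothesis \eqref{f:ipo} to \(h(u)\), and that the normal traces localize from \(J_u\) to \(J_{h(u)}\), are precisely the points the paper leaves implicit). The only real divergence is in (iii): the paper proves \eqref{thetacomp} first for strictly increasing \(h\) following Anzellotti's coarea argument and then reduces the merely non-decreasing case to it as in \cite[Proposition~2.7]{LaSe}, whereas you treat the non-decreasing case in one stroke, using the right pseudo-inverse \(t(s)=\sup\{t:\,h(t)\le s\}\) to identify \(\{h(u)>s\}\) with \(\{u>t(s)\}\) and invoking the fact that the locally Lipschitz \(h\) maps \(\mathcal{L}^1\)-null sets to null sets (via \(h(t(s))=s\)) to transfer the exceptional set of parameters in \eqref{f:thetaut} from the \(t\)- to the \(s\)-variable. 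This is a sound and slightly more self-contained handling of the flat parts of \(h\); the underlying tool, the coarea consequence \eqref{f:thetaut}, is the same in both proofs, so nothing essential is gained or lost beyond avoiding the external reduction step.
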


\begin{remark} 
	Formula \eqref{thetacomp} has been proved by Anzellotti
	(see \cite[Proposition 2.8]{Anz}) for
	\(h\in C^1\), \(u\in BV(\Omega)\) and
	\(\A\in L^\infty(\Omega, \R^N)\) with \(\Div \A \in L^N(\Omega)\).
	Moreover, it has been proved in \cite[Proposition 2.7]{LaSe}
	when \(D^j u = 0\).
\end{remark}

\begin{remark}
The same characterization of \((\A, Dh(u))\) holds true if \(h\colon I\to\R\) is a locally Lipschitz
function in a interval \(I\), provided that \(u(\Omega)\subset I\) and
\(h\circ u \in BV_{{\rm loc}}(\Omega)\).
\end{remark}

\begin{proof} 
	From the Chain Rule Formula (see \cite[Theorem 3.99]{AFP}), we have that
\[
D^d h(u) = h'(\ut) D^d u,
\qquad
D^j h(u) = (h(u^+) - h(u^-))\, \nu \, \hh \res J_u.
\]	
On the other hand, \((h(u))^\pm = h(u^\pm)\),
hence (i) and (ii) follow
	from Theorem~\ref{t:pairing}. 

It remains to prove (iii).
If $h$ is a strictly increasing function,
then formula \eqref{thetacomp} can be proved as in \cite[Proposition 2.8]{Anz} by using the consequence \eqref{f:thetaut} of the coarea formula. 
The case of $h$ non-decreasing function can now be handled as in
\cite[Proposition~2.7]{LaSe}.
\end{proof}

\taglio{
\noindent
\hrulefill

The following result is
an immediate consequence of Theorem~\ref{t:pairing} and
Proposition~\ref{p:tracesb}.

\begin{corollary}\label{c:uA}
	Let \(\A\in\DMloc\) and \(u, v\in\BVLloc\).
	Then the measure \((v\A, Du)\) admits the following decomposition:
	\begin{itemize}
		\item[(i)]
		absolutely continuous part: 
		\((v \A, Du)^a = v \A \cdot \nabla u\, \LLN\);

		\item[(ii)]
		jump part:
		\(\displaystyle
		(v \A, Du)^j = 
		\frac{v^+\Trp{\A}{J_u}+v^-\Trm{\A}{J_u}}{2}
		\, (u^+-u^-) \, \hh \res J_u
		\),
		where we understand \(v^\pm \equiv \widetilde{v}\) 
		in \(\Omega\setminus S_v\);
		
		\item[(iii)]
		diffuse part:
		if, in addition, \eqref{f:ipo} holds, 
		then
		\((v \A, Du)^d = \widetilde{v}\, \widetilde{\A} \cdot D^d u\).
	\end{itemize}
\end{corollary}

\begin{remark}
	Observe that, in general,
	\[
	(v \A, Du) \neq v^* (\A, Du),
	\]
	because the jump part of the two measures can differ on points
	of \(J_u\).
	This can be clearly seen when \(u=v\) in Lemma~\ref{l:uA} below
	(see also the case of \(u = v = \chi_E\) in \cite[Remark 3.4]{ComiPayne}).
\end{remark}

The case \(u=v\) in Corollary~\ref{c:uA} can be completely characterized even
without the additional assumption \eqref{f:ipo}.

\noindent
\hrulefill
} 

The aim of the following results is the characterization of the pairing
\((v\A, Du)\).
We first present a preliminary result in the case \(u = v\) in Lemma~\ref{l:uA}.
The general case will follow in Proposition~\ref{p:uA}.
The same results, under the assumption \(D^j u = D^j v = 0\),
have been proven in \cite[Proposition~2.3]{MaSe}.

\begin{lemma}\label{l:uA}
	Let \(\A\in\DMloc\) and \(u\in\BVLloc\). Then
	\begin{equation}\label{f:uADu}
	(u\A, Du) = u^* (\A, Du) + \frac{(u^+ - u^-)^2}{4} \Div\A\res J_u,
	\end{equation}
	that is
	\begin{gather}
	(u\A, Du)^d = u^* (\A, Du)^d,\label{f:uADud}\\
	(u\A, Du)^j = \frac{\alpha^+ u^+ + \alpha^- u^-}{2}\, (u^+ - u^-)\, \hh\res J_u,\label{f:uADuj}
	\end{gather}
	where \(\alpha^\pm := \Trace{\A}{J_u}\).
	In particular, if \(D^j u = 0\) then \((u\A, Du) = u^* (\A, Du)\).
\end{lemma}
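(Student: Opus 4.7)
The plan is to apply Anzellotti's identity \eqref{f:anz} twice to the vector field $u^2\A$ and express the result through the pairing $(\A, D(u^2))$, which will then be identified via a chain-rule-type argument.

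First, I would apply \eqref{f:anz} to the pair $(u^2,\A)$ to obtain
\[
\Div(u^2\A) = (u^2)^*\,\Div\A + (\A, D(u^2)).
\]
Since $u\A\in\DMloc$ by Corollary~\ref{c:traces}, I would then apply \eqref{f:anz} to the pair $(u, u\A)$ and expand $\Div(u\A)$ via \eqref{f:anz} once more, obtaining
\[
\Div(u^2\A) = (u^*)^2\,\Div\A + u^*(\A, Du) + (u\A, Du).
\]
Equating the two expressions yields
\[
(u\A, Du) = (\A, D(u^2)) - u^*(\A, Du) + \bigl[(u^2)^* - (u^*)^2\bigr]\Div\A.
\]
A pointwise computation shows that $(u^2)^* - (u^*)^2$ vanishes outside $S_u$ and equals $(u^+-u^-)^2/4$ on $J_u$. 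Since $|\Div\A|\ll\hh$ and $\hh(S_u\setminus J_u)=0$, the last term coincides with $\frac{(u^+-u^-)^2}{4}\Div\A\res J_u$, exactly the \emph{jump correction} appearing in \eqref{f:uADu}.

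The main obstacle is then to establish the identity $(\A, D(u^2)) = 2u^*(\A, Du)$ as measures. The difficulty is that $h(s)=s^2$ is not monotone on $\R$, so Proposition~\ref{p:comp}(iii) cannot be invoked directly. To circumvent this, on an arbitrary open set $U\Subset\Omega$ I would set $v := u + c$ with $c > \|u\|_{L^\infty(U)}$, so that $v > 0$ in $U$. On $[0,+\infty)$ the map $h$ is non-decreasing, and the $BV$-chain rule gives $D(v^2) = 2v^*\,Dv$; since $v^* > 0$, also $|D(v^2)| = 2v^*|Dv|$ with identical polar vectors. Proposition~\ref{p:comp}(iii) then yields $\theta(\A, D(v^2)) = \theta(\A, Dv)$ on the $|D(v^2)|$-support, hence
\[
(\A, D(v^2)) = \theta(\A, D(v^2))\,|D(v^2)| = 2v^*\,\theta(\A, Dv)\,|Dv| = 2v^*(\A, Dv)
\]
as measures in $U$. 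Expanding $v = u+c$, so that $D(v^2) = D(u^2) + 2c\,Du$ and $2v^* = 2u^* + 2c$, and using linearity of both sides, the $c$-terms cancel and we obtain $(\A, D(u^2)) = 2u^*(\A, Du)$ on $U$; since $U$ is arbitrary, the identity holds globally.

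Substituting into the previous display yields the main formula \eqref{f:uADu}. The decomposition \eqref{f:uADud}--\eqref{f:uADuj} follows by extracting diffuse and jump parts: the correction $\frac{(u^+-u^-)^2}{4}\Div\A\res J_u$ is concentrated on $J_u$ and therefore does not affect the diffuse part, yielding \eqref{f:uADud}; on $J_u$, combining the jump formula of Theorem~\ref{t:pairing}(ii), which gives $u^*(\A, Du)^j = \frac{u^++u^-}{2}\cdot\frac{\alpha^++\alpha^-}{2}(u^+-u^-)\,\hh\res J_u$, with the identity $\Div\A\res J_u = (\alpha^+-\alpha^-)\,\hh\res J_u$ from \eqref{mmm}, and simplifying the resulting algebraic expression, produces \eqref{f:uADuj}. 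Finally, if $D^j u = 0$ then $\hh(J_u) = 0$, so the correction vanishes and $(u\A, Du) = u^*(\A, Du)$.
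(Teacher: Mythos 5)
Your proposal is correct and follows essentially the same route as the paper: both rest on the chain-rule identity $(\A, D(u^2)) = 2u^*(\A, Du)$ obtained from Proposition~\ref{p:comp}(iii) after a positivity shift, the double expansion of $\Div(u^2\A)$ via \eqref{f:anz}, and the computation of $(u^2)^* - (u^*)^2$ on $J_u$. The only difference is organizational: the paper proves the whole formula for $u>0$ and then transfers it via $v=u+c$, whereas you use the shift only to justify the chain-rule step, which is an equally valid arrangement of the same argument.
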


\begin{proof}
Since the statement is local in nature,
it is not restrictive to assume that \(u\in L^\infty(\Omega)\).

Let us first assume that \(u > 0\).
Since \(D(u^2) = 2 u^* Du\),
from Proposition~\ref{p:comp}(iii) we have that
\[
\theta(\A, D(u^2), x) = \theta(\A, Du, x)
\qquad
\text{for \(|Du|\)-a.e.}\ x\in\Omega,
\]
hence
\[
(\A, D(u^2)) = \theta(\A, D(u^2), x) |D(u^2)|
= \theta(\A, Du, x) 2 u^* |Du| = 2 u^* (\A, Du).
\]
Starting from the relation
\[
\Div(u^2\A) = (u^2)^* \Div\A + (\A, D(u^2))
= (u^2)^* \Div\A + 2 u^* (\A, Du)
\] 	
we get
\[
\begin{split}
2 u^* (\A, Du) & = \Div(u^2\A) - (u^2)^* \Div\A
= u^* \Div(u\A) + (u\A, Du) -  (u^2)^* \Div\A
\\ & = [(u^*)^2 - (u^2)^*] \Div\A + u^* (\A, Du) + (u\A, Du),
\end{split}
\]
that is
\[
(u\A, Du) = u^* (\A, Du) - [(u^*)^2 - (u^2)^*] \Div\A.
\]
Hence \eqref{f:uADu} follows after observing
that \((u^*)^2 - (u^2)^* = 0\) in \(\Omega\setminus S_u\)
and \((u^*)^2 - (u^2)^* = - (u^+ - u^-)^2 / 4\) on \(J_u\).
The relations \eqref{f:uADud} and \eqref{f:uADuj} now follow
from Theorem~\ref{t:pairing}(ii).

\smallskip
The general case of \(u\in L^\infty(\Omega)\) can be obtained
from the previous case, considering the function \(v := u + c\),
which is positive if \(c > \|u\|_\infty\).
Namely, \eqref{f:uADu} easily follows observing that
\[
(v\A, Dv) = (u\A, Du) + c (\A, Du),
\quad
v^* = u^* + c,
\quad
J_v = J_u,
\quad
v^+ - v^+ = u^+ - u^-.
\qedhere
\]
\end{proof}

\begin{proposition}\label{p:uA}
	Let \(\A\in\DMloc\) and \(u, v\in\BVLloc\).
	Then 
\begin{equation}\label{f:vADu}
(v\A, Du) =
v^* (\A, Du) + \frac{(u^+ - u^-)(v^+ - v^-)}{4}\, \Div\A \res (J_u \cap J_v),
\end{equation}
that is
\begin{gather}
(v\A, Du)^d = v^* (\A, Du)^d,\label{f:vADud}\\
(v\A, Du)^j =  \frac{\alpha^+ v^+ + \alpha^- v^-}{2}\,(u^+ - u^-) \, \hh\res J_u,\label{f:vADuj}
\end{gather}
where \(\alpha^\pm := \Trace{\A}{J_u}\).
\end{proposition}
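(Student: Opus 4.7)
The plan is to derive formula \eqref{f:vADu} by combining two identities: a symmetric Leibniz-type relation obtained by writing $\Div((uv)\A)$ in two ways, and a polarization-type expansion based on Lemma~\ref{l:uA} applied to $u+v$.

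For the first identity, Corollary~\ref{c:traces} ensures that $u\A$ and $v\A$ belong to $\DMloc$, so three applications of \eqref{f:anz}---to the pairs $(u, v\A)$, $(v, u\A)$ and $(uv, \A)$---yield
\[
u^* v^* \Div\A + u^*(\A, Dv) + (v\A, Du)
= \Div((uv)\A)
= v^* u^* \Div\A + v^*(\A, Du) + (u\A, Dv).
\]
After the symmetric $u^* v^* \Div\A$ terms cancel, this gives the identity
\[
(v\A, Du) - v^*(\A, Du) = (u\A, Dv) - u^*(\A, Dv), \qquad (\mathrm{I})
\]
which is a "swap symmetry'' between the two mixed pairings.

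For the second identity, apply Lemma~\ref{l:uA} to $w := u+v$, to $u$, and to $v$. By bilinearity of the pairing (linearity in the $BV$ argument is immediate from \eqref{f:pairing}, and $(u+v)\A = u\A + v\A \in \DMloc$ gives linearity in the vector field),
\[
(w\A, Dw) = (u\A, Du) + (u\A, Dv) + (v\A, Du) + (v\A, Dv),
\]
and analogously $w^*(\A, Dw)$ expands into four mixed terms, using that $(u+v)^* = u^* + v^*$ holds $\hh$-a.e.\ (hence $|\Div\A|$-a.e., and also $|Dw|$-a.e.). For the correction term $\tfrac14(w^+ - w^-)^2 \Div\A \res J_w$, I choose a common orientation on $J_u \cap J_v$ (possible $\hh$-a.e.\ since $\nu_u = \pm \nu_v$ $\hh$-a.e.\ by rectifiability), so that $w^+ - w^- = (u^+-u^-) + (v^+-v^-)$ there, while $w^+ - w^-$ reduces to $u^+-u^-$ on $J_u\setminus J_v$ and to $v^+-v^-$ on $J_v\setminus J_u$. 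Squaring, subtracting the Lemma~\ref{l:uA} formulas for $(u\A, Du)$ and $(v\A, Dv)$ separately, and noting that the single-jump squares cancel, only the cross term survives, giving
\[
(u\A, Dv) + (v\A, Du) = u^*(\A, Dv) + v^*(\A, Du) + \tfrac{(u^+-u^-)(v^+-v^-)}{2}\, \Div\A \res (J_u \cap J_v). \qquad (\mathrm{II})
\]

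Combining $(\mathrm{I})$ with $(\mathrm{II})$, the two identical expressions on the left of $(\mathrm{II})$ each equal half the right-hand side, which is precisely \eqref{f:vADu}. The decomposition \eqref{f:vADud}--\eqref{f:vADuj} is then immediate: the correction term $\Div\A \res (J_u \cap J_v)$ is concentrated on the jump set $J_u$ (so it contributes nothing to the diffuse part), and by \eqref{mmm} equals $(\alpha^+ - \alpha^-)\hh \res (J_u \cap J_v)$; combining this with the jump formula $(\A, Du)^j = \alpha^* (u^+-u^-)\hh\res J_u$ from Theorem~\ref{t:pairing}(ii) and the elementary identity
$(v^++v^-)(\alpha^++\alpha^-) + (v^+-v^-)(\alpha^+-\alpha^-) = 2(v^+ \alpha^+ + v^- \alpha^-)$
yields \eqref{f:vADuj}. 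The main technical obstacle is the bookkeeping of orientations on $J_u \cap J_v$, but the bilinear expression $(u^+-u^-)(v^+-v^-)\Div\A$ is invariant under a simultaneous sign flip of both orientations, which is the only orientation freedom $\hh$-a.e., so no ambiguity arises.
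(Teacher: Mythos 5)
Your proposal is correct and follows essentially the same route as the paper: identity (II) is the paper's polarization step (Lemma~\ref{l:uA} applied to $u+v$, expanded by bilinearity), identity (I) is the paper's relation obtained by writing $\Div(uv\A)$ in two ways via \eqref{f:anz}, and combining them yields \eqref{f:vADu}, with \eqref{f:vADud}--\eqref{f:vADuj} then following from Theorem~\ref{t:pairing}(ii) and \eqref{mmm} exactly as you indicate. Your extra remarks on orientations on $J_u\cap J_v$ and the algebraic identity for the jump part are details the paper leaves implicit, but they do not change the argument.
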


\begin{proof}
From Lemma~\ref{l:uA} we have that
\begin{equation}\label{f:LH}
\begin{split}
((u+v)\A, D(u+v)) = {} & (u+v)^* (\A, D(u+v)) 
\\ & + 
\frac{(u^+ + v^+ - u^- - v^-)^2}{4}\, \Div\A \res (J_u\cup J_v).
\end{split}
\end{equation}
Let us compute the two sides of this equality.
We have that
\[
\begin{split}
LHS  = {} & 
(u\A, Du) + (v\A, Dv) + (v\A, Du) + (u\A, Dv)
\\ = {} &
u^* (\A, Du) + \frac{(u^+-u^-)^2}{4} \Div\A\res J_u
+ v^* (\A, Dv) + \frac{(v^+ - v^-)^2}{4}\, \Div\A \res J_v
\\ & + (v\A, Du) + (u\A, Dv)
\end{split}
\]
On the other hand, the right--hand side of \eqref{f:LH} is computed as
\[
\begin{split}
RHS = {} &
u^* (\A, Du) + u^*(\A, Dv) + v^*(\A, Du) + v^* (\A, Dv)
\\ & + \frac{(u^+-u^-)^2}{4}\, \Div\A\res J_u
+ \frac{(v^+-v^-)^2}{4}\, \Div\A\res J_v
\\ & + \frac{(u^+-u^-)(v^+-v^-)}{2}\, \Div\A\res (J_u\cap J_v).
\end{split}
\]
Hence, after some simplifications \eqref{f:LH} gives
\begin{equation}\label{f:eq1}
\begin{split}
(v\A, Du) + (u\A, Dv)  = {} &
u^*(\A, Dv)  + v^* (\A, Du)
\\ & + \frac{(u^+-u^-)(v^+-v^-)}{2}\, \Div\A\res (J_u\cap J_v).
\end{split}
\end{equation}
Since
\[
\Div(uv\A) = u^*\Div(v\A) + (v\A, Du), 
\qquad
\Div(uv\A) = v^*\Div(u\A) + (u\A, Dv), 
\]
it holds
\begin{equation}\label{f:uvA}
(v\A, Du) - (u\A, Dv) = v^* (\A, Du) - u^*(\A, Dv).
\end{equation}
Summing together \eqref{f:eq1} and \eqref{f:uvA} we get \eqref{f:vADu}.
The relations \eqref{f:vADud} and \eqref{f:vADuj} now follow
from Theorem~\ref{t:pairing}(ii).
\end{proof}

\begin{remark}
	Observe that, in general,
	\[
	(v \A, Du) \neq v^* (\A, Du),
	\]
	because the jump part of the two measures can differ on points
	of \(J_u\cap J_v\)
	(see also the case of \(u = v = \chi_E\) in \cite[Remark 3.4]{ComiPayne}).
\end{remark}

\begin{proposition}[Leibniz rule]
	\label{p:leibniz}
	Let \(\A\in\DMloc\) and \(u, v\in\BVLloc\).
	Then
	\begin{equation}\label{f:leibniz}
	(\A, D(uv)) = v^*(\A, Du) + u^*(\A, Dv).
	\end{equation}
	More precisely, the measure \((\A, D(uv))\) admits the following decomposition:
	\begin{itemize}
		\item[(i)]
		absolutely continuous part: 
		\((\A, D(uv))^a = \A \cdot \nabla (uv)\, \LLN\), with
		\(\nabla(uv) = u \nabla v + v \nabla u\);
		
		\item[(ii)]
		jump part:
		\[
		\begin{split}
		& (\A, D(uv))^j = 
		\frac{\alpha^+ + \alpha^-}{2} (u^+ v^+ - u^- v^-)  \hh \res (J_u \cup J_v).
		\end{split}
		\]
		where 
		\(\alpha^\pm := \Trace{\A}{J_u \cup J_v}\);
		
		\item[(iii)]
		diffuse part:
		if, in addition, \(|D^c(uv)|(S_{\A}) = 0\),
		then
		\((\A, D(uv))^d = \widetilde{\A} \cdot D^d (uv)\), with
		\(D^d(uv) = \ut D^d v + \widetilde{v} D^d u\).
		
	\end{itemize}
\end{proposition}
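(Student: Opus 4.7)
The basic strategy is to derive the formula \eqref{f:leibniz} by combining Anzellotti's identity \eqref{f:anz} applied to the three vector fields $\A$, $v\A$, and $uv\A$ (all of which lie in $\DMloc$ thanks to \cite[Theorem~3.1]{ChenFrid} and the fact that products of bounded $BV$ functions are in $\BVLloc$), and then using Proposition~\ref{p:uA} to rewrite the resulting pairing $(v\A, Du)$.

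More precisely, I would start by writing
\[
\Div(uv\A) = u^*\, \Div(v\A) + (v\A, Du) = u^*v^*\, \Div\A + u^*(\A, Dv) + (v\A, Du),
\]
where the first equality is \eqref{f:anz} applied to $v\A$ and the second is \eqref{f:anz} applied to $\A$, multiplied by $u^*$. On the other hand, applying \eqref{f:anz} directly to $\A$ with the test function $uv$ gives
\[
\Div(uv\A) = (uv)^*\, \Div\A + (\A, D(uv)).
\]
Subtracting yields
\[
(\A, D(uv)) = u^*(\A, Dv) + (v\A, Du) + \bigl[u^*v^* - (uv)^*\bigr]\, \Div\A.
\]
Now Proposition~\ref{p:uA} substitutes $(v\A, Du) = v^*(\A, Du) + \frac{(u^+-u^-)(v^+-v^-)}{4}\, \Div\A\res (J_u\cap J_v)$, so \eqref{f:leibniz} reduces to showing that the correction term cancels.

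The main computational step, and the one requiring the most care, is the pointwise analysis of $u^*v^* - (uv)^*$ with respect to $\Div\A$. Since $|\Div\A|\ll \hh$, this measure is concentrated on $(\Omega\setminus S_u)\cup J_u$ intersected with the analogous set for $v$. Outside $S_u\cup S_v$ one has $u^*v^* = (uv)^*$, while on $J_u\setminus J_v$ (resp.\ $J_v\setminus J_u$) the factor $v$ (resp.\ $u$) has no jump so again $u^*v^* = (uv)^*$. On $J_u\cap J_v$, choosing the orientation coherently, a direct expansion of $u^* = (u^++u^-)/2$, $v^* = (v^++v^-)/2$ and $(uv)^* = (u^+v^+ + u^-v^-)/2$ gives
\[
u^*v^* - (uv)^* = -\frac{(u^+-u^-)(v^+-v^-)}{4},
\]
which exactly cancels the extra term from Proposition~\ref{p:uA}. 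This establishes \eqref{f:leibniz}.

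Finally, the decomposition (i)--(iii) follows by applying Theorem~\ref{t:pairing} to the single $BV$ function $uv$, combined with the standard Leibniz rules in $BV$ (see \cite[Example~3.97]{AFP}): $\nabla(uv) = u\nabla v + v\nabla u$ $\LLN$-a.e., $D^d(uv) = \ut D^d v + \widetilde{v} D^d u$, and $J_{uv}\subseteq J_u\cup J_v$ with $(uv)^\pm = u^\pm v^\pm$ under the coherent orientation of jump normals. Substituting these identities into Theorem~\ref{t:pairing}(i)--(iii) yields (i) and (iii) immediately, and the jump formula in (ii) follows from $(uv)^+-(uv)^- = u^+v^+ - u^-v^-$. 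The assumption $|D^c(uv)|(S_\A)=0$ in (iii) is exactly the hypothesis \eqref{f:ipo} of Theorem~\ref{t:pairing} for the function $uv$, so no additional argument is needed.
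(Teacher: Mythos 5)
Your proof is correct and follows essentially the same route as the paper: both rest on Anzellotti's identity \eqref{f:anz}, the formula \eqref{f:vADu} of Proposition~\ref{p:uA}, and the pointwise computation \(u^*v^*-(uv)^* = -\tfrac{(u^+-u^-)(v^+-v^-)}{4}\) (nonzero only on \(J_u\cap J_v\), which is all that matters since \(|\Div\A|\ll\hh\)), with the decomposition (i)--(iii) read off from Theorem~\ref{t:pairing} applied to \(uv\). The only cosmetic difference is that the paper expands \(\Div(uv\A)\) symmetrically into two halves and applies \eqref{f:vADu} to both \((v\A,Du)\) and \((u\A,Dv)\), whereas you use a single asymmetric expansion and one application of \eqref{f:vADu}.
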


\begin{proof}
We have that
\[
\begin{split}
(\A, D(uv)) = {} &
\Div(uv\A) - (uv)^* \Div\A
= \frac{1}{2} \Div(uv\A) + \frac{1}{2} \Div(uv\A) - (uv)^* \Div\A
\\ = {} &
\frac{1}{2}\left[ u^* \Div(v\A) + (v\A, Du)\right]
+ \frac{1}{2}\left[ v^* \Div(u\A) + (u\A, Dv)\right]
- (uv)^*\Div\A
\\ = {} &
\frac{1}{2}\, u^*\left[ \Div(v\A) - v^* \Div\A\right]
+ \frac{1}{2}\, v^*\left[ \Div(u\A) - u^* \Div\A\right]
\\ & +\frac{1}{2}\, (v\A, Du) + \frac{1}{2}\, (u\A, Dv)
+\left[u^*v^* - (uv)^*\right]\Div\A
\\ = {} &
\frac{1}{2}\, u^* (\A, Dv) + \frac{1}{2}\, v^* (\A, Du)
+\frac{1}{2}\, (v\A, Du) + \frac{1}{2}\, (u\A, Dv)
\\ & +\left[u^*v^* - (uv)^*\right]\Div\A\,.
\end{split}
\]
A direct computation shows that
\[
u^* v^* - (uv)^* = -\frac{(u^+ - u^-)(v^+-v^-)}{4}
\qquad \text{\(\hh\)-a.e.\ in}\ J_u\cup J_v,
\]
whereas \(u^* v^* - (uv)^* = 0\) in \(\Omega\setminus(S_u\cup S_v)\).

Hence, using \eqref{f:vADu} on \((u\A, Dv)\) and \((v\A, Du)\),
we finally get \eqref{f:leibniz}.
\end{proof}

Using the results proved so far,
Theorem~\ref{t:pairing} can be slightly extended to the case of unbounded \(BV\) functions
as follows.

\begin{theorem}\label{t:pairing2}
	Let $\A \in \DMloc$, $u\in BV_{{\rm loc}}(\Omega)$ and assume that 
	$u^*\in L^1_{\rm{loc}}(\Omega,\Div \A)$.
	Then the pairing \((\A, Du)\), defined as a distribution by \eqref{f:pairing},
	is a Radon measure in \(\Omega\) and admits the decomposition given in Theorem~\ref{t:pairing}.
\end{theorem}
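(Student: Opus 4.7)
The plan is to proceed by truncation, reducing the claim to Theorem~\ref{t:pairing}. Setting $u_k := T_k(u)\in\BVLloc$ as in \eqref{trun}, each $(\A, Du_k)$ is a Radon measure with the three-part decomposition of Theorem~\ref{t:pairing}. I would show that $(\A, Du_k)\to(\A, Du)$ distributionally and combine this with a uniform bound to deduce both the measure property and the decomposition for $u$.

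For the convergence, I would fix $\varphi\in C^\infty_c(\Omega)$ and pass to the limit in both summands of \eqref{f:pairing}: since $|u_k|\le|u|\in L^1_{\rm loc}(\Omega)$ and $|u_k^*|=|T_k(u^*)|\le|u^*|\in L^1_{\rm loc}(\Omega,|\Div\A|)$ by hypothesis, dominated convergence yields $\langle(\A, Du_k),\varphi\rangle\to\langle(\A, Du),\varphi\rangle$. For any open $U\Subset\Omega$ and $\varphi\in C^\infty_c(U)$, the uniform bound $|\langle(\A, Du_k),\varphi\rangle|\le\|\A\|_{L^\infty(U)}|Du|(U)\|\varphi\|_\infty$, which uses the slicing inequality $|Du_k|\le|Du|$, passes to the limit; by Riesz representation, $(\A, Du)$ is then a Radon measure absolutely continuous with respect to $|Du|$.

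For the decomposition, I would introduce the three candidate measures
\[
\nu^a := \A\cdot\nabla u\,\LLN,\qquad
\nu^j := \tfrac{\Trp{\A}{J_u}+\Trm{\A}{J_u}}{2}\,(u^+-u^-)\,\hh\res J_u,\qquad
\nu^c := \widetilde{\A}\cdot D^c u
\]
(the last under assumption \eqref{f:ipo}), which are mutually singular. By Theorem~\ref{t:pairing} applied to $u_k$, the analogous $\nu^a_k,\nu^j_k,\nu^c_k$ sum to $(\A, Du_k)$. Testing against $\varphi\in C^\infty_c(\Omega)$, I would verify $\nu^a_k(\varphi)\to\nu^a(\varphi)$ using $\nabla u_k=\mathbf{1}_{\{|u|<k\}}\nabla u$ dominated by $|\nabla u|$; $\nu^j_k(\varphi)\to\nu^j(\varphi)$ using the pointwise convergence $u_k^\pm=T_k(u^\pm)\to u^\pm$ together with the $1$-Lipschitz, monotone bound $|u_k^+-u_k^-|\le|u^+-u^-|\in L^1_{\rm loc}(\hh\res J_u)$; and $\nu^c_k(\varphi)\to\nu^c(\varphi)$ via the $BV$ chain rule $D^c u_k=\mathbf{1}_{\{|\ut|<k\}}D^c u$, which together with $\polar_{u_k}=\polar_u$ on $\mathrm{supp}\,|D^c u_k|$ reduces the matter to dominated convergence against $|D^c u|$. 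Combining with the first step gives $(\A, Du)=\nu^a+\nu^j+\nu^c$ as distributions, hence as measures; the mutual singularity of the three pieces then identifies them with the Lebesgue decomposition of $(\A, Du)$ and delivers the claimed formulas.

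The main obstacle is the Cantor term: one needs the $BV$ chain rule to describe $D^c u_k$, and the observation that assumption \eqref{f:ipo} transfers to $u_k$ (since $|D^c u_k|\le|D^c u|$), so that $\widetilde{\A}\cdot D^c u_k$ is defined uniformly in $k$ and Theorem~\ref{t:pairing}(iii) applies at each level. The other two pieces are routine dominated-convergence bookkeeping once the truncation framework is in place.
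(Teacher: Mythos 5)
Your proposal is correct and is, in substance, the same truncation argument the paper uses: set $u_k=T_k(u)$, pass to the limit in \eqref{f:pairing} by dominated convergence, apply Theorem~\ref{t:pairing} to each $u_k$, and transfer the decomposition to the limit. Two points differ in the bookkeeping. First, the paper simply cites \cite{DeGiSe} for the fact that $(\A,Du)$ is a Radon measure with $|(\A,Du)|\ll|Du|$, whereas you rederive it from the uniform bound and Riesz representation; this is a legitimate, more self-contained route, but to get the absolute continuity (not just the order-zero property) you should use the refined estimate $|\pscal{(\A,Du_k)}{\varphi}|\le\|\A\|_{L^\infty(U)}\int|\varphi|\,d|Du|$ coming from $|(\A,Du_k)|\le\|\A\|_{L^\infty(U)}|Du_k|\le\|\A\|_{L^\infty(U)}|Du|$, rather than the cruder $\|\varphi\|_\infty|Du|(U)$ you wrote. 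Second, the paper identifies the limit by localization: using $D^d u_k\res\{|\ut|<k\}=D^d u\res\{|\ut|<k\}$ and the fact that $x\in J_{u_k}$ with $u_k^\pm(x)=u^\pm(x)$ once $k>\max\{|u^+(x)|,|u^-(x)|\}$, so that the formulas for $(\A,Du_k)$ coincide with the claimed ones on sets exhausting $\Omega$ up to $|Du|$-negligible sets; you instead prove convergence of each of the three candidate pieces and then invoke mutual singularity, which is an equally valid and arguably cleaner variant of the same idea. One caveat: as written, your identity $(\A,Du_k)=\nu^a_k+\nu^j_k+\nu^c_k$ invokes Theorem~\ref{t:pairing}(iii), hence assumption \eqref{f:ipo}; since parts (i)--(ii) of the decomposition are asserted unconditionally, in the general case you should keep the Cantor parts of $(\A,Du_k)$ abstract and note that $|(\A,Du_k)^c|\le\|\A\|_{L^\infty(U)}|D^c u_k|\le\|\A\|_{L^\infty(U)}|D^c u|$, so any limit of these pieces is still singular with respect to $\LLN$ and charges no $\sigma$-finite set for $\hh$; it therefore does not interfere with identifying the absolutely continuous and jump parts, and under \eqref{f:ipo} it is $\widetilde{\A}\cdot D^c u$ as you claim.
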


\begin{proof}
	The fact that \((\A, Du)\) is a Radon measure in \(\Omega\),
	with \(|(\A, Du)| \ll |Du|\), has been proved
	in \cite[Corollary~2.3]{DeGiSe}.
	
	Properties (i), (ii) and (iii) in Theorem~\ref{t:pairing}
	will follow with a truncation argument similar to that used
	in the proof of Proposition~2.7 in \cite{Anz}.
	
	More precisely,
	let us define the truncated functions $u_k := T_k(u)$
	where \(T_k\) is defined in \eqref{trun}.

Since $|u_k^*|\leq |u^*|$,
by the Dominated Convergence Theorem we can pass to the limit in the relation
\[
\pscal{(\A, Du_k)}{\varphi} =
-\int_\Omega u_k^* \varphi\, d\Div\A - \int_\Omega u_k \A\cdot\nabla\varphi\, dx
\]
obtaining that
\[
\pscal{(\A, Du_k)}{\varphi} \to
\pscal{(\A, Du)}{\varphi}
\qquad \forall\varphi\in C^\infty_c(\Omega).
\]
Since $|D^c u_k| \leq |D^c u|$,
from Theorem~\ref{t:pairing} 
it holds
\begin{gather}
(\A, Du_k)^d = \widetilde{\A}\cdot D^d u_k,
\qquad \text{if}\ |D^c u|(S_{\A}) = 0,
\label{f:id1}\\
(\A, Du_k)^j = \Trace[*]{\A}{J_{u_k}} (u_k^+-u_k^-) \hh\res J_{u_k}
= \Trace[*]{\A}{J_{u}} (u_k^+-u_k^-) \hh\res J_{u}.\label{f:id2}
\end{gather}

From the Chain Rule Formula (see \cite[Example 3.100]{AFP})
we have that
\[
D^d u_k \res \{|\ut| < k\} = D^du \res \{|\ut| < k\}.
\]
Since, for every \(x\in\Omega\setminus S_u\) there exists \(k > 0\) such that
\(x \in \{|\ut| < k\}\), from \eqref{f:id1} we conclude that
(i) and (ii) in Theorem~\ref{t:pairing} hold.

Concerning the jump part, observe that
if \(x\in J_u\) and
\(k > \max\{|u^+(x)|, |u^-(x)|\}\),
then \(x\in J_{u_k}\) and \(u_k^\pm(x) = T_k(u^\pm(x)) = u^\pm(x)\).
Hence from \eqref{f:id2} we can conclude that also property (iii) in Theorem~\ref{t:pairing} holds.
\end{proof}

\begin{remark}
We extract the following fact from the proof of Theorem~\ref{t:pairing2}.
Let $\A \in \DMloc$, $u\in BV_{{\rm loc}}(\Omega) \cap L^1_{\rm{loc}}(\Omega,\Div \A)$,
and let \(u_k := T_k(u)\) be the truncated functions of \(u\),
where \(T_k\) is defined in \eqref{trun}.
If we define
\[
\Omega_k := \{x\in J_u:\ |u^\pm(x)| < k\} \cup \{x\in \Omega\setminus S_u:\ |\ut(x)| < k\},
\]
then it holds
\[
(\A, Du_k) \res \Omega_k
= (\A, Du) \res \Omega_k
\qquad \forall k > 0.
\]
\end{remark}

\begin{remark}
Let $\A \in \DMloc$ and $u\in BV_{{\rm loc}}(\Omega)$.
Then \(u^*\in L^1_{\rm{loc}}(\Omega,\Div \A)\)
if at least one of the following conditions holds:
\begin{itemize}
	\item[(a)]
	\(u\in L^\infty_{{\rm loc}}\);
	\item[(b)]
	\(\Div\A\geq 0\) or \(\Div\A\leq 0\).
\end{itemize}
The first case is trivial.
For case (b) the proof follows from \cite[Remark~8.3]{PhTo}.
\end{remark}

We conclude this section with an approximation result in the spirit
of \cite[Theorem~1.2]{ChenFrid}.
This kind of approximation has been used for example in \cite{Anz} and \cite{ChenFrid}
as an essential tool in order to pass from smooth vector fields to less regular fields.
Unfortunately, in our general setting, properties (iv) and (v) below can be proved only
under the additional assumption $|D^c u| (S_{\A}) = 0$, so we cannot use
this approximation to obtain the Gauss--Green formula in Section~\ref{s:GG}.
Nevertheless, we think that Proposition~\ref{approx2} may be useful
in order to get semicontinuity results for functionals depending linearly in $\nabla u$.

\begin{proposition}[Approximation by \(C^\infty\) functions]\label{p:approx}
	Let \(\A\in\DM(\Omega)\).
	Then there exists a sequence \((\A_k)_k\) in 
	\(C^\infty(\Omega,\R^N)\cap L^\infty(\Omega, \R^N)\) 
	satisfying the following properties.
	\begin{itemize}
		\item[(i)] 
		\(\A_k \to \A\) in \(L^1(\Omega,\R^N)\) and
		\(\int_\Omega |\Div \A_k|\, dx \to |\Div \A|(\Omega)\).
		\item[(ii)] 
		\(\Div\A_k \stackrel{*}{\rightharpoonup} \Div\A\)
		in the weak${}^*$ sense of measures in \(\Omega\).
		\item[(iii)]
		For every oriented countably \(\hh\)-rectifiable set \(\Sigma\subset\Omega\) it holds
		\[
		\pscal{\Trace{\A_k}{\Sigma}}{\varphi} \to
		\pscal{\Trace[*]{\A}{\Sigma}}{\varphi}
		\qquad \forall \varphi\in C_c(\Omega),
		\]
		where \(\Trace[*]{\A}{\Sigma} := [\Trp{\A}{\Sigma} + \Trm{\A}{\Sigma}]/2\).
	\end{itemize}
	If, in addition,
	\(u\in \BVLloc\) and \(|D^c u|(S_{\A}) = 0\), then
	\begin{itemize}
		\item[(iv)] 
		\((\A_k, Du) \stackrel{*}{\rightharpoonup} (\A, Du)\)
		locally in the weak${}^*$ sense of measures in \(\Omega\);
		\item[(v)]
		\(
		\theta(\A_k, Du, x) \to \theta(\A, Du, x)\)
		for \(|Du|\)-a.e.\ \(x\in\Omega\).
	\end{itemize}
\end{proposition}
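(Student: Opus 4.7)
The natural candidate is the standard mollification $\A_k := \rho_{1/k}\ast\A$, which is defined on any $\Omega'\Subset\Omega$ for $k$ large enough; each $\A_k$ is smooth with $\|\A_k\|_{L^\infty}\le\|\A\|_{L^\infty}$ and $\Div\A_k = \rho_{1/k}\ast\Div\A$ classically. Parts (i) and (ii) are then routine mollification facts: $\A_k\to\A$ in $L^1_{\mathrm{loc}}$; the estimate $\int|\rho_{1/k}\ast\Div\A|\,dx\le|\Div\A|(\Omega)$ combined with the lower semicontinuity of total variation yields the norm convergence; and $\Div\A_k\weakstarto\Div\A$ is the classical weak$^*$ convergence of the convolution of a finite Radon measure with a mollifier.

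For (iii), by the construction in Section~\ref{distrtraces} and a partition-of-unity argument one reduces to $\Sigma = \partial\Omega_i\cap N_i$, with $\Omega_i$ a bounded $C^1$ domain oriented by $\nu_\Sigma$. Since $\A_k$ is smooth, its two distributional normal traces coincide with the classical $\A_k\cdot\nu_\Sigma$, and the defining identity \eqref{f:disttr} reads
\[
\int_\Sigma(\A_k\cdot\nu_\Sigma)\varphi\,d\hh = \int_{\Omega_i}\A_k\cdot\nabla\varphi\,dx + \int_{\Omega_i}\varphi\,\Div\A_k\,dx.
\]
The first summand converges to $\int_{\Omega_i}\A\cdot\nabla\varphi\,dx$ by the $L^1_{\mathrm{loc}}$ convergence. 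For the second, rewrite it as $\int_\Omega[\rho_{1/k}\ast(\chi_{\Omega_i}\varphi)]\,d\Div\A$; by the symmetry of $\rho$ and the $C^1$ regularity of $\partial\Omega_i$, this mollification converges pointwise to $\chi_{\Omega_i}\varphi$ off $\partial\Omega_i$ and to $\varphi/2$ on $\partial\Omega_i$, so dominated convergence yields
\[
\int_{\Omega_i}\varphi\,\Div\A_k\,dx \to \int_{\Omega_i}\varphi\,d\Div\A + \tfrac{1}{2}\int_\Sigma\varphi\,d\Div\A.
\]
Combining with the defining identity of $\Trm{\A}{\Sigma} = \Tr(\A,\partial\Omega_i)$ and with the jump formula \eqref{mmm}, the total limit simplifies to $\int_\Sigma\tfrac{1}{2}(\Trp{\A}{\Sigma}+\Trm{\A}{\Sigma})\varphi\,d\hh$, proving (iii).

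For (iv) and (v), under the assumption $|D^c u|(S_\A)=0$, Theorem~\ref{t:pairing} applies to both $\A$ and $\A_k$, the Cantor hypothesis being trivial for $\A_k$ since $S_{\A_k}=\emptyset$. A direct inspection of the three parts of the decomposition shows that $\theta(\A_k,Du,x) = \A_k(x)\cdot\theta_u(x)$ for $|Du|$-a.e.\ $x$, while $\theta(\A,Du,\cdot)$ is described by \eqref{f:theta}. On the diffuse part, $|D^d u|(S_\A)=0$ and Proposition~\ref{lolol} give $\A_k\to\widetilde\A$ pointwise $|D^d u|$-a.e., hence $\A_k\cdot\theta_u\to\widetilde\A\cdot\theta_u$ $|D^d u|$-a.e. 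On $J_u$, writing
\[
\A_k(x)\cdot\nu_u(x) = \int_{\R^N}\A(x+\tfrac{z}{k})\cdot\nu_u(x)\,\rho(z)\,dz
\]
and splitting along the hyperplane $\{z\cdot\nu_u(x)=0\}$, a blow-up argument in the spirit of the proof of Proposition~\ref{p:tracesb}, based on the Lebesgue-point characterization of the one-sided normal traces on the rectifiable set $J_u$, shows that each half-space integral converges to $\Tr^\pm(\A,J_u)(x)/2$ by the symmetry of $\rho$, whence the sum equals $\alpha^*(x) := \tfrac{1}{2}(\Trp{\A}{J_u}+\Trm{\A}{J_u})$ at $\hh$-a.e.\ $x\in J_u$; this proves (v). Once (v) is established, (iv) follows from it by dominated convergence, since $|\theta(\A_k,Du,x)|\le\|\A\|_{L^\infty}$ uniformly. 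The main obstacle is precisely the pointwise a.e.\ convergence on $J_u$ in (v): the distributional statement (iii) alone only gives weak$^*$ convergence in $L^\infty(J_u,\hh\res J_u)$, and upgrading it to $\hh$-a.e.\ convergence genuinely requires the blow-up analysis, resting on the two-sided Lebesgue-point structure of $\Trp{\A}{J_u}$ and $\Trm{\A}{J_u}$ on the rectifiable set.
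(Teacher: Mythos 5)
Your plan for (ii) and (iii) is essentially the paper's (rewrite the mollified divergence term by duality against $\Div\A$ and use pointwise convergence of the mollified cut-off to the precise representative, then invoke \eqref{mmm}), but there are two genuine gaps. First, the construction: the statement asks for a sequence in $C^\infty(\Omega,\R^N)\cap L^\infty(\Omega,\R^N)$ with $\A_k\to\A$ in $L^1(\Omega)$ and $\int_\Omega|\Div\A_k|\,dx\to|\Div\A|(\Omega)$ on a \emph{general} open set $\Omega$. The plain mollification $\rho_{1/k}\ast\A$ is only defined on $\{x:\ \mathrm{dist}(x,\partial\Omega)>1/k\}$, so, as you yourself concede, it only yields a local variant (the content of the remark following the proposition for $\DMloc$), not (i) as stated. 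The paper uses the Chen--Frid partition-of-unity construction \eqref{f:Akk}, with the commutator estimate \eqref{f:epsi}, precisely to get globally defined fields on $\Omega$ with the global convergence of total variations; that bookkeeping then reappears in its proof of (iii).

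The more serious gap is the key step of your route to (iv)--(v): the claim that for $\hh$-a.e.\ $x\in J_u$ each half-ball average $\int_{\pm z\cdot\nu_u(x)>0}\A(x+z/k)\cdot\nu_u(x)\,\rho(z)\,dz$ converges to $\tfrac12\Trace[\pm]{\A}{J_u}(x)$. This is asserted, not proved, and it does not follow ``in the spirit of Proposition~\ref{p:tracesb}'': there the blow-up compares \emph{surface} quantities (the distributional traces of $u\A$ and $\A$) and the solid integrals are error terms killed by the smallness conditions (b)--(c) on $|\Div\A|$ and $|\Div(u\A)|$; it never shows that a solid average of $\A\cdot\nu$ converges to a normal trace. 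For a general field in $\DM$ the normal traces are defined only distributionally, the field need not have one-sided limits in any pointwise sense on $J_u$, and being a Lebesgue point of $\Trace[\pm]{\A}{J_u}$ with respect to $\hh\res J_u$ gives no control on $N$-dimensional averages of $\A$ near the point. So the crux of your argument for (v) is exactly the statement you have not established, and (iv) is made to depend on it. The paper's proof is organized the other way around: on the diffuse part it uses $\A_k\to\widetilde\A$ off $S_{\A}$ together with the hypothesis $|D^cu|(S_{\A})=0$, on the jump part it uses only the weak$^*$ trace convergence (iii) at the level of measures to get $(\A_k,Du)^j\weakstarto(\A,Du)^j$, and (v) is then read off from the convergence of the pairings; no pointwise $\hh$-a.e.\ attainability of the traces by solid averages is ever invoked. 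As written, your proposal therefore does not prove (i), and (iv)--(v) rest on an unjustified (and, for $\DM$ fields, highly nontrivial) pointwise convergence claim on $J_u$.
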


\begin{remark}
	It is not difficult to show that a similar approximation result holds also
	for \(\A\in\DMloc\) with a sequence \((\A_k)\) in
	\(C^\infty(\Omega,\R^N)\). 
\end{remark}

\begin{proof}
	(i) This part is proved in \cite[Theorem 1.2]{ChenFrid}.
	We just recall, for later use, that
	for every $k$
the vector field \(\A_k\) is of the form
\begin{equation}\label{f:Akk}
\A_k = \sum_{i=1}^\infty \rho_{\eps_i} \ast (\A \varphi_i),
\end{equation}
where \((\varphi_i)\) is a partition of unity subordinate to
a locally finite covering of \(\Omega\)
depending on $k$ and, for every $i$,
\(\eps_i \in (0, 1/k)\) 
is chosen in such a way that
\begin{equation}
\label{f:epsi}
\int_\Omega \left|
\rho_{\eps_i} \ast (\A\cdot\nabla\varphi_i) - \A\cdot \nabla \varphi_i
\right|\, dx\leq \frac{1}{k\, 2^i}
\end{equation}	
(see \cite{ChenFrid}, formula (1.8)).	

\smallskip
	(ii) From (i) we have that 
	\[
	\int_\Omega \A_k\cdot\nabla\varphi\, dx \to \int_\Omega \A\cdot\nabla\varphi\, dx
	\qquad
	\forall \varphi\in C^1_c(\Omega),
	\]
	hence (ii) follows from  \(\sup_k \int_\Omega |\Div \A_k|\, dx < +\infty\)
	and the density of \(C^0_c(\Omega)\) in \(C^1_c(\Omega)\)
	in the norm of \(L^\infty(\Omega)\).

\smallskip	
	(iii)
Before proving (iii), we need to prove the following claim:
if 
\(E\Subset\Omega\) is a set of finite perimeter,
then
\begin{equation}
\label{f:ivp}
\lim_{k\to +\infty}\int_{\Omega} \chi_{E} \, \varphi \, \Div\A_k\, dx
= \int_{\Omega} \chi^*_{E} \,\varphi \, d \Div\A,
\qquad \forall \varphi\in C^\infty_c(\Omega).
\end{equation}
Namely, from the definition \eqref{f:Akk} of $\A_k$ 
and the identity $\sum_i \nabla\varphi_i = 0$
we have that
\[
\Div \A_k = \sum_i \rho_{\eps_i}\ast (\varphi_i \Div\A)
+ \sum_i \left[
\rho_{\eps_i} \ast (\A \cdot \nabla\varphi_i) - \A \cdot \nabla\varphi_i
\right].
\]
From \eqref{f:epsi} we have that
\[
\left|
\sum_i \int_\Omega \chi_{E} \varphi \left[\rho_{\eps_i} \ast (\A \cdot \nabla\varphi_i) - \A \cdot \nabla\varphi_i\right]\, dx
\right| < \frac{1}{k}\, \|\varphi\|_\infty\,,
\]
hence, to prove \eqref{f:ivp}, it is enough to show that
\begin{equation}
\label{f:limkapp}
\lim_{k\to +\infty}
\sum_i \int_\Omega \chi_{E} \varphi \, \rho_{\eps_i} \ast (\varphi_i \Div \A)
=  
\int_{\Omega} \chi^*_{E} \,\varphi \, d \Div\A.
\end{equation}
On the other hand,
\[
\sum_i \int_\Omega \chi_{E} \varphi \, \rho_{\eps_i} \ast (\varphi_i \Div \A)
=
\sum_i \int_\Omega \rho_{\eps_i}\ast ( \chi_{E} \varphi) \, \varphi_i \, d \Div \A\,,
\]
hence \eqref{f:limkapp} follows observing that,
$\H^{N-1}$--a.e.\ in $\Omega$,
\[
\chi^*_{E}\varphi - \sum_i \varphi_i \rho_{\eps_i}\ast(\chi_{E} \varphi)
= \sum_i \varphi_i \left[
\chi^*_{E}\varphi -
 \rho_{\eps_i}\ast(\chi_{E} \varphi)
\right]
\to 0.
\]

\smallskip	
Let us now prove (iii).
	Let \(\Omega'\Subset \Omega\) be a set of class \(C^1\).
	By the definition \eqref{f:disttr},
	by (i), (ii) and \eqref{f:ivp},
	for every \(\varphi\in C^\infty_c(\Omega)\) we have that
	\[
	\begin{split}
	\pscal{\Trace[]{\A_k}{\partial\Omega'}}{\varphi}
	& = \int_{\Omega'} \A_k \cdot \nabla\varphi\, dx
	+ \int_{\Omega'} \varphi \, \Div\A_k\, dx
	\\ & = 
	\int_{\Omega} \chi_{\Omega'} \, \A_k \cdot \nabla\varphi\, dx
	+ \int_{\Omega} \chi_{\Omega'} \, \varphi \, \Div\A_k\, dx
	\\ & \to 
	\int_{\Omega} \chi_{\Omega'} \,\A \cdot \nabla\varphi\, dx
	+ \int_{\Omega} \chi^*_{\Omega'} \,\varphi \, d \Div\A
	\\ & =
	\int_{\Omega'} \A \cdot \nabla\varphi\, dx
	+ \int_{\Omega'} \varphi \, d\Div\A
	+ \frac{1}{2} \int_{\partial\Omega'} \varphi\, d\Div\A.
	\end{split}
	\]
	Hence, using the notations of Section~\ref{distrtraces}, 
	by \eqref{mmm} on the set \(N_i \subset \partial\Omega_i \cap \partial\Omega_i'\)
	it holds
	\[
	\Trm{\A_k}{\Sigma} = \Trace[]{\A_k}{\partial\Omega_i}
	\to \Trm{\A}{\Sigma}
	+\frac{1}{2}\left[
	\Trp{\A}{\Sigma} - \Trm{\A}{\Sigma}
	\right]
	=\Trace[*]{\A}{\Sigma},
	\]
	where the convergence is in the weak${}^*$ sense of \(L^\infty\).
	A similar computation holds for \(\Trp{\A_k}{\Sigma}\).
	
	(iv)
	From the very definition \eqref{f:Akk} of $\A_k$, we have that
	\begin{equation}\label{f:Akd}
	\A_k(x) \to \widetilde{\A}(x)
	\qquad
	\text{for \(\hh\)-a.e.\ } x\in\Omega.
	\end{equation}
	From Theorem~\ref{t:pairing}, \eqref{f:Akd} and (iii) we have that
	\begin{gather*}
	\A_k(x) \cdot D^d u \to \widetilde{\A}(x)\cdot D^d u,
	\qquad \text{\(|D^d u|\)-a.e.\ in}\ \Omega,\\
	\Trace[]{\A_k}{J_u}(u^+ - u^-)
	\to \Trace[*]{\A}{J_u}(u^+ - u^-),
	\qquad
	\text{\(\hh\)-a.e.\ in}\ J_u,
	\end{gather*}
	hence
	\[
	(\A_k, Du)^d \to (\A, Du)^d,
	\qquad
	(\A_k, Du)^j \to (\A, Du)^j\,.
	\]
	
	(v)
	Using the definition \eqref{theta} of \(\theta\),
	we have that, for every \(\varphi\in C_c(\Omega)\),
	\[
	\begin{split}
	\int_\Omega \theta(\A_k, Du, x) \varphi(x)\, d|Du|
	& =
	\pscal{(\A_k, Du)}{\varphi}
	\\ & \to
	\pscal{(\A, Du)}{\varphi}
	=
	\int_\Omega \theta(\A, Du, x) \varphi(x)\, d|Du|,
	\end{split}
	\]
	hence (v) follows.
\end{proof}

\begin{proposition}\label{approx2}
Let $(\A_k)$ be a sequence in $\DM(\Omega)$ such that $\A_k\to\A\in\DM(\Omega)$ in $L^1_{\rm loc}(\Omega;\R^N)$
and the sequence $\mu_k := \Div \A_k$ locally weakly${}^*$ converges to \(\mu := \Div\A\).
Let \(u\in BV(\Omega)\cap L^\infty(\Omega)\)
be compactly supported in \(\Omega\).
Then the following hold:
\begin{itemize}
\item[(a)]
If the measures \(\mu_h\) are positive and \(u\geq 0\),
then
\begin{gather}
\int_\Omega  u^-\, d\mu \leq \liminf_{h\to\infty}\int_\Omega u^-\, d\mu_h,
\label{eq1}\\
\int_\Omega  u^+\, d\mu \geq \limsup_{h\to\infty}\int_\Omega u^+\, d\mu_h,
\label{eq2}
\end{gather}
where \(u^-\) (resp.\ \(u^+\)) is the approximate lower (resp.\ upper) limit of \(u\).
 
\item[(b)]
Assume that \(|\mu_h| \stackrel{*}{\rightharpoonup} |\mu|\)
locally weakly${}^*$. 
If 
\(|\mu|(J_u) = 0\), then
\begin{equation}\label{eq3}
\int_\Omega u^*\, d\mu = \lim_{h\to +\infty} \int_\Omega u^*\, d\mu_h,
\qquad
\int_\Omega u^\pm\, d\mu = \lim_{h\to +\infty} \int_\Omega u^\pm\, d\mu_h.
\end{equation}
\end{itemize}	
\end{proposition}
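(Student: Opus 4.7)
My plan is to prove part (a) via a Dal Maso-type approximation of $u^-$ from below, and then derive part (b) by a Hahn-decomposition argument that exploits the hypothesis $|\mu|(J_u)=0$. Since $u\in BV(\Omega)\cap L^\infty(\Omega)$ is nonnegative, Theorem~\ref{t:lecce} gives that $u^-$ is $C_1$-quasi lower semicontinuous. Applying Lemma~\ref{maso1} to $u^-$, then truncating by $\|u\|_\infty$ and multiplying by a cutoff $\psi\in C^\infty_c(\Omega)$ equal to $1$ on $\mathrm{supp}\,u$, one obtains an increasing sequence $(v_h)\subset W^{1,1}(\R^N)\cap L^\infty(\R^N)$ of nonnegative functions, compactly supported in $\Omega$, each approximately continuous $\hh$-a.e., with $\widetilde{v}_h\nearrow u^-$ pointwise $\hh$-a.e., and therefore also $|\mu_k|$- and $|\mu|$-a.e.

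For each fixed $h$, the key step is the identity $\int_\Omega \widetilde{v}_h\,d\mu_k = -\int_\Omega \A_k\cdot \nabla v_h\,dx$ (and the analogue for $\mu$, $\A$), which I obtain by mollifying $v_h^\eps := v_h * \rho_\eps\in C^\infty_c(\Omega)$, starting from $\int v_h^\eps\,d\mu_k = -\int \A_k\cdot\nabla v_h^\eps\,dx$, and passing $\eps\to 0$: the right-hand side converges via $\nabla v_h^\eps\to\nabla v_h$ in $L^1$ and $\A_k\in L^\infty$, while the left-hand side converges by Proposition~\ref{lolol} combined with $|\mu_k|\ll\hh$ and dominated convergence. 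Then $\A_k\to\A$ in $L^1_{\rm loc}$ yields $\int \widetilde{v}_h\,d\mu_k\to\int \widetilde{v}_h\,d\mu$ as $k\to\infty$. Combined with $\widetilde{v}_h\leq u^-$ $\mu_k$-a.e.\ and positivity of $\mu_k$, this gives $\int \widetilde{v}_h\,d\mu \leq \liminf_k\int u^-\,d\mu_k$; letting $h\to\infty$ and invoking monotone convergence (valid since $u^-\in L^1(\mu)$ as $u$ is bounded and compactly supported) proves \eqref{eq1}. For \eqref{eq2}, apply \eqref{eq1} to $w := \|u\|_\infty\psi - u \geq 0$, whose approximate lower limit satisfies $w^- = \|u\|_\infty\psi - u^+$ $\hh$-a.e., and combine with $\int \psi\,d\mu_k\to\int\psi\,d\mu$ (weak${}^*$ convergence against the continuous test function $\psi$).

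For part (b), the two hypotheses $\mu_k\stackrel{*}{\rightharpoonup}\mu$ and $|\mu_k|\stackrel{*}{\rightharpoonup}|\mu|$ force, through $\mu_k^\pm = (|\mu_k|\pm\mu_k)/2$, the weak${}^*$ convergence of the Hahn pieces: $\mu_k^\pm\stackrel{*}{\rightharpoonup}\mu^\pm$. Applying part (a) to each positive sequence $(\mu_k^\pm)$, after the nonnegativity shift $u\mapsto u+C\psi$ that handles $u$ of arbitrary sign, gives
\[
\int u^-\,d\mu^\pm \leq \liminf_k\int u^-\,d\mu_k^\pm, \qquad \int u^+\,d\mu^\pm \geq \limsup_k\int u^+\,d\mu_k^\pm.
\]
The assumption $|\mu|(J_u)=0$ yields $\mu^\pm(J_u)=0$, hence $u^+ = u^- = u^*$ $\mu^\pm$-a.e.; these four inequalities therefore collapse to equalities with genuine limits, and recombining via $\mu_k=\mu_k^+-\mu_k^-$ proves \eqref{eq3}. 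The chief technical obstacle is the passage $\int \A_k\cdot\nabla v_h\,dx\to\int \A\cdot\nabla v_h\,dx$ when $\nabla v_h$ is only $L^1$: this demands either a uniform local $L^\infty$ bound on the $\A_k$ or the extraction of an a.e.\ convergent subsequence from the $L^1_{\rm loc}$ convergence together with an equi-integrability argument on the compact support of $\nabla v_h$.
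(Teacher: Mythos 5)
Your part (a) is essentially the paper's own argument: quasi--semicontinuity of $u^\pm$ (Theorem~\ref{t:lecce}), Dal Maso's monotone $W^{1,1}$ approximation (Lemma~\ref{maso1}), the integration-by-parts identity $\int\widetilde v_h\,d\mu_k=-\int\A_k\cdot\nabla v_h\,dx$ for bounded compactly supported $W^{1,1}$ approximants (which you justify by mollification and $|\mu_k|\ll\hh$, a point the paper takes for granted), and monotone convergence; your derivation of \eqref{eq2} by applying \eqref{eq1} to $\|u\|_\infty\psi-u$ is a harmless variant of the paper's second monotone family $\widetilde v_h\searrow u^+$. The ``chief technical obstacle'' you flag, namely $\int\A_k\cdot\nabla v_h\,dx\to\int\A\cdot\nabla v_h\,dx$ with only $L^1_{\rm loc}$ convergence of $\A_k$ and $\nabla v_h\in L^1$, is indeed not resolved by the stated hypotheses; but the paper's proof makes exactly the same silent step in \eqref{aaa}, so on this point you are no less rigorous than the reference proof (in the paper's main application the $\A_k$ are mollifications of $\A$, hence locally uniformly bounded).

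Part (b) contains a genuine gap. You apply part (a) to the Hahn pieces $\mu_k^\pm=(|\mu_k|\pm\mu_k)/2$, but part (a) is not an abstract statement about nonnegative measures converging weakly${}^*$: both its statement and its proof require the measures to be divergences of $\DM$ fields converging in $L^1_{\rm loc}$, since the whole argument hinges on the identity $\int\widetilde v_h\,d\mu_k=-\int\A_k\cdot\nabla v_h\,dx$. The positive and negative parts of $\Div\A_k$ are in general not divergences of bounded fields, and no approximating fields for them are available, so the four inequalities you write for $\mu_k^\pm$ are unjustified; positivity plus weak${}^*$ convergence alone do not give lower/upper semicontinuity of $\nu\mapsto\int u^\mp\,d\nu$ when $u^\pm$ are merely $C_1$-quasi semicontinuous. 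Note that your scheme is fine when all $\mu_k\geq 0$ (then $\mu_k^+=\mu_k$, $\mu_k^-=0$, and (a) applies directly, giving a slightly slicker route than the paper's explicit sandwich between $\ut_h\nearrow u^-$ and $\widetilde v_h\searrow u^+$); the problem is precisely the signed case, which is the only situation where the extra hypothesis $|\mu_k|\stackrel{*}{\rightharpoonup}|\mu|$ matters. The paper handles the signed case only by the one-line remark ``write $\mu$ as the difference of its positive and negative part,'' so a fully detailed argument is not spelled out there either; but as written, your reduction invokes part (a) outside its hypotheses and therefore does not close the proof of \eqref{eq3}.
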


\begin{proof}
(a)
Let us first consider the case \(u\in W^{1,1}(\Omega)\cap L^\infty(\Omega)\).
Since 
$u$ has compact support in \(\Omega\),
it follows that 
\begin{equation}\label{aaa}
\int_\Omega  \widetilde u\, d\mu 
=
-\int_\Omega \nabla u \cdot \A \, dx
=
\lim_{k\to\infty}-\int_\Omega  \nabla u\cdot \A_k\, dx
=
\lim_{k\to\infty}\int_\Omega  \widetilde u\, d\mu_k.
\end{equation}

Let us now consider the general case \(u\in BV(\Omega)\).
From Theorem~\ref{t:lecce},
the approximate upper limit $u^+$ and the approximate
lower limit $u^-$ are $C_1$-quasi upper semicontinuous and $C_1$-quasi lower semicontinuous,
respectively.
In order to prove \eqref{eq1}, we remark that  by Lemma \ref{maso1}
there exists an increasing sequence of nonnegative
functions $(u_h)\subseteq W^{1,1}(\Omega)$ such that, for every $h\in\N$,
$u_h$ is approximately continuous $\H^{N-1}$-almost everywhere in $\Omega$ and
$\ut_h(x)\to u^-(x)$, when $h\to+\infty$,  for $\H^{N-1}$-almost every $x\in\Omega$.

Therefore for $\H^{N-1}$-almost every $x\in\Omega$
\[
u^-(x)=\sup_{h\in\N}\widetilde u_h(x)
\]
and for every $\phi\in C^0_c(\Omega)$, with $0\leq\phi\leq 1$, we have
\[
\int_\Omega \phi u^-\, d\mu =\sup_{h\in\N}\int_\Omega \phi\widetilde u_h \, d\mu.
\]
Moreover, since $u\in L^\infty(\Omega)$, we can assume that,
for every \(h\in\N\), $u_h\in L^\infty(\Omega)$, then
$\phi\, u_h\in W^{1,1}(\Omega)\cap L^\infty(\Omega)$, with compact support,  
and  $\mu(S_{\phi u_h})=0$. 
Hence, by \eqref{aaa},
\[
\int_\Omega \phi\, \widetilde u_h \, d\mu 
= \lim_{k\to\infty}\int_\Omega \phi\, \widetilde u_h\, d\mu_k
\leq
\liminf_{k\to\infty}\int_\Omega \phi\, u^-\, d\mu_k.
\]
The conclusion follows taking the supremum among all the functions $\phi\in C^0_c(\Omega)$, 
with $0\leq\phi\leq 1$, and among the 
$h\in\N$.

The proof of \eqref{eq2} is similar, since by Lemma \ref{maso1}
there exists a decreasing sequence of nonnegative
functions $(v_h)\subseteq W^{1,1}(\Omega)$ such that, for every $h\in\N$,
$v_h$ is approximately continuous $\H^{N-1}$-almost everywhere in $\Omega$ and
$\widetilde{v}_h(x)\to u^+(x)$, when $h\to+\infty$,  for $\H^{N-1}$-almost every $x\in\Omega$.
Therefore for $\H^{N-1}$-almost every $x\in\Omega$
\[
u^+(x)=\inf_{h\in\N}\widetilde v_h(x)
\]
and  we have
\[
\int_\Omega u^+\, d\mu = \inf_{h\in\N}\int_\Omega\widetilde v_h \, d\mu.
\]
Moreover, since $u\in L^\infty(\Omega)$, we have that $v_h\in L^\infty(\Omega)$ for any $h$  sufficiently large, and since the support of $u$ is compact and $u\in L^\infty(\Omega)$ there exists a relatively compact 
neighborhood $U$ of the support of $u$ which contains the support of $v_h$ for any $h$  sufficiently large. Therefore 
$v_h\in W^{1,1}(\Omega)\cap L^\infty(\Omega)$ and it has compact support for $h$  sufficiently large,  and  $\mu(S_{v_h})=0$. 
Hence  we get
\[
\int_\Omega \widetilde v_h \, d\mu
= \lim_{k\to\infty}\int_\Omega \widetilde v_h\, d\mu_k
\geq
\limsup_{k\to\infty}\int_\Omega v^+\, d\mu_k.
\]
The conclusion follows taking the infimum among the 
$h\in\N$.

(b)
In order to prove \eqref{eq3} firstly we assume that $\mu_k\geq 0$. 
We observe that $\widetilde{v}_h-\ut_h\to 0$ $\H^{N-1}$-a.e.\ on $\Omega\setminus S_u$
and, since $\mu(S_u)=0$, 
\[
\lim_{h\to +\infty}\int_\Omega (\widetilde v_h-\widetilde u_h)\, d\mu=0.
\]
We have 
\[
\begin{split}
\int_\Omega \widetilde u_h\, d\mu 
& =
\lim_{k\to\infty}\int_\Omega \widetilde u_h\, d\mu_k
\leq
\liminf_{k\to\infty}\int_\Omega u^-\, d\mu_k
\leq
\limsup_{k\to\infty}\int_\Omega u^+\, d\mu_k
\\ &
\leq
\lim_{k\to\infty}\int_\Omega \widetilde v_h\, d\mu_k
=
\int_\Omega \widetilde v_h\, d\mu.
\end{split}
\]
By taking $h\to+\infty$, we obtain that
\[
\int_\Omega u^-\, d\mu
=
\lim_{k\to\infty}\int_\Omega u^-\, d\mu_k
=
\lim_{k\to\infty}\int_\Omega u^+\, d\mu_k
=
\int_\Omega u^+\, d\mu.
\]
By the definition of $u^*$ we get
\[
\lim_{k\to\infty}\int_\Omega u^-\, d\mu_k
=
\lim_{k\to\infty}\int_\Omega u^+\, d\mu_k
=
\lim_{k\to\infty}\int_\Omega u^*\, d\mu_k.
\]
The general case can we obtained by writing the measure $\mu$ as the difference between its positive and its negative part.
This concludes the proof.
\end{proof}

\begin{remark}
We would like to underline two consequences
of Proposition~\ref{approx2}.

(a) By \eqref{eq3}, for every \(u\in\BVLloc\), if $|\Div\A|(J_u)=0$, then
$$
\pscal{(\A_k, Du)}{\phi}\to \pscal{(\A, Du)}{\phi}\qquad \forall \phi\in C^0_c(\Omega)\,.
$$

(b) If $-\Div \A_k\geq 0$, then
\[
-\int_\Omega  u^-\Div\A\leq\liminf_{k\to\infty}\left(-\int_\Omega u^-\Div \A_k\right).
\]
\end{remark}

\section{The Gauss--Green formula}
\label{s:GG}

In this section we will prove a generalized Gauss--Green formula
for vector fields \(\A\in \DMloc[\R^N]\) on a set \(E\subset\R^N\) of finite perimeter.

Using the conventions of Section~\ref{distrtraces},
we will assume that the generalized normal vector on \(\partial^* E\) coincides
\(\hh\)-a.e.\ on \(\partial^* E\) with the measure--theoretic 
\textsl{interior} unit normal vector \(\nuint_E\) to \(E\).
Hence, if $\alpha^\pm := \Trace{\A}{\partial^* E}$
are the normal traces of \(\A\) on \(\partial^* E\)
according to our definition in Section~\ref{distrtraces},
then, using the notation of \cite{ComiPayne}, 
\(\alpha^+ \equiv (\mathcal{A}_i\cdot\nuint_E)\) and \(\alpha^- \equiv (\mathcal{A}_e\cdot\nuint_E)\) 
correspond respectively to the interior and the exterior normal traces on \(\partial^* E\).

Since \(|D\chi_E| = \hh\res\partial^* E\), 
from Proposition~\ref{p:uA} we deduce that \(\alpha^+\) and \(\alpha^-\) are respectively
the Radon--Nikod\'ym derivatives with respect to \(|D\chi_E|\)
of the measures
\[
\sigma_i := 2\, (\chi_E \A, D\chi_E),
\qquad
\sigma_e := 2\, (\chi_{\R^N\setminus E} \A, D\chi_E),
\]
that are both absolutely continuous with respect to \(|D\chi_E|\), hence
\[
\sigma_i = \alpha^+\, \hh\res\partial^* E,
\qquad
\sigma_e = \alpha^-\, \hh\res\partial^* E
\]
(see also \cite[Theorem 3.2]{ComiPayne}).

For example, if \(E\) is an open bounded set of class \(C^1\) and \(\A\) is a piecewise continuous vector field
that can be extended continuously by vector fields \(\A_i\) and \(\A_e\)
in \(\overline{E}\) and \(\R^N\setminus E\) respectively, then
\[
\alpha^+ = -\Trace[]{\A}{\partial E} = 
- \A_i \cdot \nu_E = \A_i\cdot\nuint_E, 
\qquad
\alpha^- = \A_e \cdot \nuint_E.
\]

If \(u\in BV_{{\rm loc}}(\R^N)\), in the following formulas we denote
\[
u^\pm(x) := \ut(x)
\qquad \forall x\in \R^N\setminus S_u.
\]

\begin{theorem}\label{t:GG}
Let $\A \in \DMloc[\R^N]$, $u\in BV_{{\rm loc}}(\R^N)$ and assume that $u^*\in L^1_{\rm{loc}}(\R^N,\Div \A)$.
Let \(E \subset \R^N\) be a bounded set with finite perimeter.
Then the
following Gauss--Green formulas hold:
\begin{gather}
\int_{E^1} u^* \, d\Div\A + \int_{E^1} (\A, Du) = -
\int_{\partial ^*E} \alpha^+u^+ \ d\mathcal H^{N-1}\,,\label{GreenIB}
\\
\int_{E^1\cup \partial ^*E} u^* \, d\Div\A + \int_{E^1\cup \partial ^*E} (\A, Du) = -
\int_{\partial ^*E} \alpha^-u^- \ d\mathcal H^{N-1}\,,\label{GreenIB2}
\end{gather}
where $E^1$ is the measure theoretic interior of $E$ and 
$\alpha^\pm := \Trace{\A}{\partial^* E}$
are the normal traces of \(\A\) when \(\partial^* E\) is oriented
with respect to the interior unit normal vector. 
\end{theorem}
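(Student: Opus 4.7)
The plan is to reduce the Gauss--Green identities to the $u\equiv 1$ case, exploiting the fact that, for bounded $u$, the product $u\A$ is itself a divergence--measure field whose normal traces are known by Proposition~\ref{p:tracesb}.

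First I would treat the case $u\in\BVLloc[\R^N]$. Set $\B:=u\A$. By Proposition~\ref{p:tracesb} we have $\B\in\DMloc[\R^N]$ and, on $\partial^*E$,
\[
\Trp{\B}{\partial^*E}=u^+\alpha^+,\qquad \Trm{\B}{\partial^*E}=u^-\alpha^-,
\]
where we use the paper's convention $u^\pm\equiv\ut$ outside $S_u$. Next I would apply the Anzellotti relation \eqref{f:anz} to the pair $(\chi_E,\B)$:
\[
\Div(\chi_E\B)=\chi_E^*\,\Div\B+(\B,D\chi_E).
\]
Since $E$ is bounded, $\chi_E\B$ is compactly supported, so $\Div(\chi_E\B)(\R^N)=0$. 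Splitting $\R^N=E^1\cup\partial^*E\cup E^0$, using $\chi_E^*=1,\tfrac12,0$ on the three pieces, the identity \eqref{mmm} giving $\Div\B\res\partial^*E=(\Trp{\B}{\partial^*E}-\Trm{\B}{\partial^*E})\,\hh\res\partial^*E$, and Theorem~\ref{t:pairing}(ii) giving $(\B,D\chi_E)=\tfrac12(\Trp{\B}{\partial^*E}+\Trm{\B}{\partial^*E})\,\hh\res\partial^*E$, a direct computation yields
\[
0=\Div\B(E^1)+\int_{\partial^*E}\Trp{\B}{\partial^*E}\,d\hh.
\]
Substituting $\Div\B=u^*\Div\A+(\A,Du)$ and $\Trp{\B}{\partial^*E}=u^+\alpha^+$ gives \eqref{GreenIB}. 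For \eqref{GreenIB2} the same computation, now treating $E^1\cup\partial^*E$ as the "inside" (equivalently, playing the analogous game with $\chi_{\R^N\setminus E}$), yields $\Div\B(E^1\cup\partial^*E)+\int_{\partial^*E}\Trm{\B}{\partial^*E}\,d\hh=0$ and the formula follows.

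Finally, to remove the boundedness assumption on $u$, I would use the truncation argument already employed in the proof of Theorem~\ref{t:pairing2}: set $u_k:=T_k(u)$, apply Steps 1--3 to each $u_k$, and pass to the limit. Dominated convergence, justified by $u^*\in L^1_{\rm loc}(\R^N,\Div\A)$ and $|u_k^\pm|\le|u^\pm|\le|u^*|$ (plus boundedness of $\alpha^\pm$), handles the two terms $\int_{E^1}u_k^*\,d\Div\A$ and $\int_{\partial^*E}u_k^\pm\alpha^\pm\,d\hh$. For the pairing term, one uses the observation recorded after Theorem~\ref{t:pairing2}: on the set $\Omega_k:=\{x\in J_u:|u^\pm(x)|<k\}\cup\{x\in\Omega\setminus S_u:|\ut(x)|<k\}$ the measures $(\A,Du_k)$ and $(\A,Du)$ agree, and $\Omega_k\nearrow\R^N$ up to a $|Du|$-null set, so $\int_{E^1}(\A,Du_k)\to\int_{E^1}(\A,Du)$ by monotone/dominated convergence applied separately to the positive and negative parts.

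The main obstacle is the bookkeeping in the middle step: the three contributions on $\partial^*E$—from $\chi_E^*\Div\B$ (weight $1/2$ with the jump of $\Div\B$), from the pairing measure $(\B,D\chi_E)$ (the symmetric average of the traces of $\B$), and from the boundary of $E^1$ in the second identity—must cancel correctly to produce the clean right--hand sides $-\int u^+\alpha^+\,d\hh$ and $-\int u^-\alpha^-\,d\hh$. Once this cancellation is verified (it is essentially the content of \eqref{mmm} combined with Theorem~\ref{t:pairing}(ii)), the rest of the proof is routine.
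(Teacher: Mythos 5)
Your proposal is correct and follows the same overall strategy as the paper: reduce to bounded $u$, use that $\chi_E u\A$ is compactly supported so $\Div(\chi_E u\A)(\R^N)=0$, apply the identity \eqref{f:anz} to the pair $(\chi_E,\,u\A)$, do the boundary bookkeeping on $\partial^*E$, and finally remove the boundedness of $u$ by truncation with $T_k$. The one genuine (if modest) difference is how the boundary measure $(u\A,D\chi_E)$ is evaluated: the paper invokes the Leibniz-type Proposition~\ref{p:uA} (whose proof rests on the chain rule and the coarea formula of Section~4), whereas you apply Theorem~\ref{t:pairing}(ii) directly to the field $\B=u\A\in\DMloc[\R^N]$, whose normal traces on $\partial^*E$ are $u^\pm\alpha^\pm$ by Proposition~\ref{p:tracesb}; since $(\B,D\chi_E)\ll|D\chi_E|=\hh\res\partial^*E$ is purely a jump measure, this gives the same identity $(u\A,D\chi_E)=\tfrac12\bigl[u^+\alpha^++u^-\alpha^-\bigr]\hh\res\partial^*E$ while making Step~1 depend only on Proposition~\ref{p:tracesb}, \eqref{mmm} and Theorem~\ref{t:pairing}(ii), i.e.\ it bypasses the Section~4 machinery that the paper's own remark lists as an ingredient. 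Similarly, in the truncation step your use of the restriction identity $(\A,Du_k)\res\Omega_k=(\A,Du)\res\Omega_k$ (with $\Omega_k$ increasing to the complement of the $|Du|$-negligible set $S_u\setminus J_u$) together with the bound $|(\A,Du_k)|\le\|\A\|_{L^\infty(U)}|Du|$ on bounded sets is a clean way to get $\int_{E^1}(\A,Du_k)\to\int_{E^1}(\A,Du)$, arguably more transparent than the paper's passage through the distributional form of the pairing. One inessential slip: the inequality $|u^\pm|\le|u^*|$ is false in general (e.g.\ $u^+=3$, $u^-=-1$ gives $u^*=1$); what your dominated-convergence step actually needs, and what suffices, is $|T_k(u)^\pm|=|T_k(u^\pm)|\le|u^\pm|$ for the boundary term and the pointwise convergence $T_k(u)^*\to u^*$ with a dominating function for the term against $\Div\A$, exactly as in the paper's Step~2.
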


\begin{remark}
We emphasize that the assumptions on $\A$ and $u$ in
Theorem~\ref{t:GG} are in some sense minimal.
Namely, on the vector field $\A$ we require the minimal regularity
in order to have $\Div \A$ a measure
and to have the existence of
weak normal traces along countably $\H^{N-1}$-rectifiable sets. 
Moreover, the class $BV$ for the function $u$ 
is required to construct the pairing measure,
and it is enough to guarantee
the existence of traces on these sets.
In other words, our feeling is that we cannot weaken
any assumptions on $\A$ or $u$ without losing the meaning of
at least one ingredient in formulas \eqref{GreenIB}--\eqref{GreenIB2}.

It is also worth to underline that assumption~\eqref{f:ipo} is not
required here,
since it is needed only to prove the relation
\((\A, Du)^d = \widetilde{\A} \cdot D^d u\). 
Namely, the proof of 
\eqref{GreenIB}--\eqref{GreenIB2}
is based on two main ingredients:
(i)
the characterization of the weak normal traces of
$\Div(u\A)$ on $\partial^* E$ given by
Proposition~\ref{p:tracesb},
and 
(ii) the Leibniz formula stated in Proposition~\ref{p:uA} 
which is a consequence of \eqref{thetacomp}
and, at the end, of the coarea formula 
\eqref{f:thetaut}.
\end{remark}

\begin{remark}
This result extends Theorem 5.3 of \cite{ChToZi} where $u=\phi\in C^\infty_c$
 (see also \cite[Theorem 4.1]{ComiPayne} where $u=\phi\in {\rm {Lip}}_{\rm {loc}}$).
Leonardi and Saracco (see Theorem 2.2 in \cite{LeoSar}) established a similar formula by considering the collection $X(E)$ of vector fields $\A\in L^\infty(E;\R^N) \cap C^0(E;\R^N)$ such
that $\Div\A \in L^\infty(E)$ and by assuming that the set $E$ with finite perimeter satisfies a weak regularity condition.
(We remark that, in this case, there is the additional difficulty that the vector field $\A$ is defined only
on $E$.) 
\end{remark}

\begin{proof}
Since \(E\) is bounded, without loss of generality we can assume that
\(\A\in\DM(\R^N)\) and \(u\in BV(\R^N)\).
We divide the proof into two steps.

\noindent
\textsl{Step 1}. 
Firstly, we consider the case $u\in L^\infty(\R^N)$.
Since $E$ is a bounded set with finite perimeter, we have that $\chi_E\in BV(\R^N)$ 
and the reduced boundary $\partial ^*E$ is a $\hh$-rectifiable set. 
Moreover, the vector field \(\chi_E u \A\) is compactly supported,
so that 
\[
\Div(\chi_E u \A) (\R^N) = 0
\]
(see \cite[Lemma 3.1]{ComiPayne}).
Hence by choosing in \eqref{f:anz} $\chi_E$ instead of $u$ and $u\A$ instead of $\A$, we get
\begin{equation}\label{GreenIIIA}
\int_{\R^N} \chi_E^* \, d\Div(u\A) =
- (u\A, D\chi_E) (\R^N).
\end{equation}
We recall that 
\[
\chi_E^*=\chi_{E^1}+\frac12\chi_{\partial ^*E},
\]
and, by Proposition \ref{p:tracesb} and the definition of normal traces 
it holds
\[
\Div (u\A) \res \partial^* E =
(u^+ \alpha^+ - u^- \alpha^-) \hh \res \partial^* E\,.
\]
Hence
\begin{equation}\label{f:gl}
\int_{\R^N} \chi_E^* \, d\Div(u\A)  =
\int_{E^1} \, d\Div(u\A) +\frac12\int_{\partial ^*E} \, [u^+\alpha^+-u^-\alpha^-] \ d\mathcal H^{N-1}\,.
\end{equation}
On the other hand $D\chi_E = \nuint_E\, \hh \res \partial^* E$
so that, by Proposition~\ref{p:uA},
\[
(u\A,D\chi_E)= (u\alpha)^*\, \hh\res \partial^* E,
\]
that in turn gives
\begin{equation}\label{f:gr}
(u\A, D\chi_E) (\R^N) = 
\int_{\partial ^*E}\frac12 [u^+\alpha^++u^-\alpha^-]\ d\mathcal H^{N-1}\,.
\end{equation}
Finally, substituting \eqref{f:gl} and \eqref{f:gr} in \eqref{GreenIIIA} and simplifying,
we obtain \eqref{GreenIB}.

On the other hand,
\[
\begin{split}
\int_{E^1\cup \partial ^*E} \, d\Div(u\A)=&\int_{E^1} \, d\Div(u\A) +\int_{\partial ^*E} \, [u^+\alpha^+-u^-\alpha^-] \ d\mathcal H^{N-1}
\\
=&-\int_{\partial ^*E}u^+\alpha^+\ d\mathcal H^{N-1}+\int_{\partial ^*E} \, [u^+\alpha^+-u^-\alpha^-] \ d\mathcal H^{N-1}\,,
\end{split}
\]
hence \eqref{GreenIB2} follows.
This concludes the proof of  Step 1.

\smallskip
\noindent
\textsl{Step 2}. 
Let us consider now $u\in BV(\R^N)$ such that $u^*\in L^1_{\rm{loc}}(\R^N,\Div \A)$. 
As in the proof of Theorem~\ref{t:pairing2},
let \(u_k := T_k(u)\) be the truncated functions of \(u\),
where \(T_k\) is the truncation operator defined in \eqref{trun}.

By Step 1, since $T_k(u)\in L^\infty(\R^N)$ we obtain
\begin{equation}\label{ec:4}
\int_{E^1} T_k(u)^* \, d\Div\A + \int_{E^1} (\A, DT_k(u)) =
-\int_{\partial^*E} \alpha^+ T_k(u^+) \ d\mathcal H^{N-1}\,,
\end{equation}
for every $k>0$.  
We have that
\[
T_k(u)^* = \frac{T_k(u)^+ + T_k(u)^-}{2} = \frac{T_k(u^+) + T_k(u^-)}{2}
\to \frac{u^+ + u^-}{2} = u^*,
\qquad
\text{\(\hh\)-a.e.},
\]
hence \(T_k(u)^*(x) \to u^*(x)\) for \(|\Div\A|\)-a.e.\ \(x\in\R^N\).   
Since \(|T_k(u)^*| \leq |u^*| \in L^1_{{\rm loc}}(\R^N, \Div \A)\),
from the Dominated Convergence Theorem we have that
\begin{equation}\label{f:Tk}
\lim_{k\to +\infty}
\int_{E^1} T_k(u)^* \, d\Div\A = \int_{E^1} u^*\, d\Div\A.
\end{equation}
With a similar argument we also get that
\begin{equation}\label{f:Tkk}
\lim_{k\to +\infty}
\int_{\partial^* E} \alpha^+ T_k(u)^+\, d\hh = \int_{\partial^* E} \alpha^+ u^+\, d\hh.
\end{equation}
On the other hand, by the definition \eqref{f:anz} of pairing,
for every \(\varphi\in C^\infty_c(\R^N\) it holds
\[
\langle(\A, DT_k(u)),\varphi\rangle=-\int_{\R^N} T_k(u)^*\varphi
\, d\Div\A-\int_{\R^N} T_k(u)\A\cdot\nabla\varphi\, dx\,.
\]
We can use the Dominated Convergence Theorem 
in both integrals at the right--hand side 
(for the first one we can reason as in \eqref{f:Tk}),
obtaining 
\begin{equation}\label{f:Ak}
\lim_{k\to\infty}\int_{E^1} (\A, DT_k(u)) =\int_{E^1} (\A, Du)\,.
\end{equation}
Finally, from \eqref{ec:4}, \eqref{f:Tk}, \eqref{f:Tkk} and \eqref{f:Ak} 
we get \eqref{GreenIB}.
Formula \eqref{GreenIB2} can be obtained in a similar way.
\end{proof}

\taglio{
\noindent
\hrulefill

\noindent
VECCHIA DIM DA TAGLIARE

\begin{proof}
We adapt the proof of Proposition~2.7 in \cite{Anz}.

Let us first consider the case \(u\in L^\infty_{{\rm loc}}(\Omega)\).
Let \((\A_k)\subset C^\infty(\Omega, \R^N)\cap L^\infty_{{\rm loc}}(\Omega,\R^N)\)
be the sequence of regular vector fields given by Proposition~\ref{p:approx}.

For \(\mathcal{L}^1\)-a.e.\ \(t\in\R\) we have that
\[
\frac{d Du}{d|Du|} = \frac{d D\chiut}{d|D\chiut|}
\qquad
\text{\(|D\chiut|\)-a.e.\ in}\ \Omega,
\]
hence, for every \(k\in\N\),
\begin{equation}\label{f:thetak}
\theta(\A_k, Du, x) = \theta(\A_k, D\chiut, x)
\qquad
\text{for \(|D\chiut|\)-a.e.}\ x\in\Omega.
\end{equation}
Using the coarea formula in \(BV\) (see \cite[Theorem~3.40]{AFP})
and \eqref{f:thetak},
for every \(\varphi\in C_c(\Omega)\) it holds
\begin{equation}\label{f:splitk}
\begin{split}
\pscal{(\A_k, Du)}{\varphi} & =
\int_\Omega \theta(\A_k, Du, x) \, \varphi \, d |Du|
\\ & = 
\int_\R dt \int_\Omega
\theta(\A_k, Du, x)\, \varphi \, d |D\chiut|
\\ & = 
\int_\R dt \int_\Omega
\theta(\A_k, D\chiut, x)\, \varphi \, d |D\chiut|
\\ & =
\int_\R \pscal{(\A_k, D\chiut)}{\varphi}\, dt
\,.
\end{split}
\end{equation}
Denoting by \(K\) the support of \(\varphi\), since
\[
\left|
\pscal{(\A_k, D\chiut)}{\varphi}
\right|
\leq \|\A_k\|_{L^\infty(K)} \|\varphi\|_\infty |D\chiut|(\Omega),
\]
by the Dominated Convergence Theorem
we can pass to the limit in \eqref{f:splitk}
as \(k\to +\infty\)
obtaining \eqref{f:split}.

In particular, \eqref{f:split} can we written as
\[
\int_\Omega \theta(\A, Du, x) \, \varphi \, d |Du|
=
\int_\R dt \int_\Omega
\theta(\A_k, D\chiut, x)\, \varphi \, d |D\chiut|,
\qquad
\forall\varphi\in C_c(\Omega),
\]
hence \eqref{f:thetaut} follows.

Finally, the general case $u^*\in L^1_{\rm{loc}}(\R^N,\Div \A)$ follows
using the previous step on the truncated
functions \(u_k := T_k(u)\).
\end{proof}

\noindent
\hrulefill
} 

\taglio{
\begin{proof}
Since, for \(\mathcal{L}^1\)-a.e.\ \(t\in\R\),
\[
\frac{d Du}{d|Du|} = \frac{d D\chiut}{d|D\chiut|}
\qquad
\text{\(|D\chiut|\)-a.e.\ in}\ \Omega,
\]
we have that, for \(\mathcal{L}^1\)-a.e.\ \(t\in\R\),
\[
\theta(\A, Du, x) = \theta(\A, D\chiut, x),
\qquad
\text{for \(|D\chiut|\)-a.e.}\ x\in\Omega.
\]
Let \(\varphi\in C_c(\Omega)\).
Since the function \(x\mapsto \theta(\A, Du, x) \varphi(x)\) is
\(|Du|\)-summable, from the coarea formula in \(BV\)
(see \cite[Theorem~3.40]{AFP}) it holds
\[
\begin{split}
\pscal{(\A, Du)}{\varphi} & = \int_\Omega \theta(\A, Du, x)\varphi\, d |Du|
= \int_\R dt \int_\Omega
\theta(A, Du, x)\varphi\, d|D\chiut|
\\ & = \int_R dt \int_\Omega \theta(\A, D\chiut, x)\varphi\, d|D\chiut|
= \int_\R \pscal{(\A, D\chiut)}{\varphi}\, dt,
\end{split}
\]
completing the proof.
\end{proof}
} 

\bigskip
\textsc{Acknowledgments.}
The authors would like to thank 
Giovanni E.\ Comi and Gian Paolo Leonardi
for some useful remarks
on a preliminary version of the manuscript.

They also want to thank the anonymous referee for carefully reading the manuscript and for giving constructive comments which helped improving the quality of the paper.

\def\cprime{$'$}
\begin{bibdiv}
\begin{biblist}

\bib{AmbCriMan}{article}{
      author={Ambrosio, {L.}},
      author={Crippa, {G.}},
      author={Maniglia, {S.}},
       title={Traces and fine properties of a {$BD$} class of vector fields and
  applications},
        date={2005},
        ISSN={0240-2963},
     journal={Ann. Fac. Sci. Toulouse Math. (6)},
      volume={14},
      number={4},
       pages={527\ndash 561},
         url={http://afst.cedram.org/item?id=AFST_2005_6_14_4_527_0},
      review={\MR{2188582}},
}

\bib{ADM}{incollection}{
      author={Ambrosio, {L.}},
      author={De~Lellis, {C.}},
      author={Mal\'y, {J.}},
       title={On the chain rule for the divergence of {BV}-like vector fields:
  applications, partial results, open problems},
        date={2007},
   booktitle={Perspectives in nonlinear partial differential equations},
      series={Contemp. Math.},
      volume={446},
   publisher={Amer. Math. Soc., Providence, RI},
       pages={31\ndash 67},
         url={http://dx.doi.org/10.1090/conm/446/08625},
      review={\MR{2373724}},
}

\bib{AFP}{book}{
      author={Ambrosio, {L.}},
      author={Fusco, {N.}},
      author={Pallara, {D.}},
       title={Functions of bounded variation and free discontinuity problems},
      series={Oxford Mathematical Monographs},
   publisher={The Clarendon Press Oxford University Press},
     address={New York},
        date={2000},
        ISBN={0-19-850245-1},
      review={\MR{MR1857292 (2003a:49002)}},
}

\bib{ABCM}{article}{
      author={Andreu, F.},
      author={Ballester, C.},
      author={Caselles, V.},
      author={Maz\'on, J.~M.},
       title={Minimizing total variation flow},
        date={2001},
        ISSN={0893-4983},
     journal={Differential Integral Equations},
      volume={14},
      number={3},
       pages={321\ndash 360},
      review={\MR{1799898}},
}

\bib{AVCM}{book}{
      author={Andreu-Vaillo, {F.}},
      author={Caselles, {V.}},
      author={Maz\'on, {J.M.}},
       title={Parabolic quasilinear equations minimizing linear growth
  functionals},
      series={Progress in Mathematics},
   publisher={Birkh\"auser Verlag, Basel},
        date={2004},
      volume={223},
        ISBN={3-7643-6619-2},
         url={http://dx.doi.org/10.1007/978-3-0348-7928-6},
      review={\MR{2033382}},
}

\bib{Anz}{article}{
      author={Anzellotti, {G.}},
       title={Pairings between measures and bounded functions and compensated
  compactness},
        date={1983},
        ISSN={0003-4622},
     journal={Ann. Mat. Pura Appl. (4)},
      volume={135},
       pages={293\ndash 318 (1984)},
         url={http://dx.doi.org/10.1007/BF01781073},
      review={\MR{750538}},
}

\bib{Anz2}{misc}{
      author={Anzellotti, {G.}},
       title={Traces of bounded vector--fields and the divergence theorem},
        date={1983},
        note={Unpublished preprint},
}

\bib{BouDM}{article}{
      author={Bouchitt\'e, G.},
      author={Dal~Maso, G.},
       title={Integral representation and relaxation of convex local
  functionals on {${\rm BV}(\Omega)$}},
        date={1993},
        ISSN={0391-173X},
     journal={Ann. Scuola Norm. Sup. Pisa Cl. Sci. (4)},
      volume={20},
      number={4},
       pages={483\ndash 533},
         url={http://www.numdam.org/item?id=ASNSP_1993_4_20_4_483_0},
      review={\MR{1267597}},
}

\bib{CDLP}{article}{
      author={Carriero, M.},
      author={Dal~Maso, G.},
      author={Leaci, A.},
      author={Pascali, E.},
       title={Relaxation of the nonparametric plateau problem with an
  obstacle},
        date={1988},
        ISSN={0021-7824},
     journal={J. Math. Pures Appl. (9)},
      volume={67},
      number={4},
       pages={359\ndash 396},
      review={\MR{978576}},
}

\bib{Cas}{article}{
      author={Caselles, V.},
       title={On the entropy conditions for some flux limited diffusion
  equations},
        date={2011},
        ISSN={0022-0396},
     journal={J. Differential Equations},
      volume={250},
      number={8},
       pages={3311\ndash 3348},
         url={http://dx.doi.org/10.1016/j.jde.2011.01.027},
      review={\MR{2772392}},
}

\bib{ChenFrid}{article}{
      author={Chen, {G.-Q.}},
      author={Frid, {H.}},
       title={Divergence-measure fields and hyperbolic conservation laws},
        date={1999},
        ISSN={0003-9527},
     journal={Arch. Ration. Mech. Anal.},
      volume={147},
      number={2},
       pages={89\ndash 118},
         url={http://dx.doi.org/10.1007/s002050050146},
      review={\MR{1702637}},
}

\bib{ChFr1}{article}{
      author={Chen, {G.-Q.}},
      author={Frid, {H.}},
       title={Extended divergence-measure fields and the {E}uler equations for
  gas dynamics},
        date={2003},
        ISSN={0010-3616},
     journal={Comm. Math. Phys.},
      volume={236},
      number={2},
       pages={251\ndash 280},
         url={http://dx.doi.org/10.1007/s00220-003-0823-7},
      review={\MR{1981992}},
}

\bib{ChTo2}{article}{
      author={Chen, {G.-Q.}},
      author={Torres, {M.}},
       title={Divergence-measure fields, sets of finite perimeter, and
  conservation laws},
        date={2005},
        ISSN={0003-9527},
     journal={Arch. Ration. Mech. Anal.},
      volume={175},
      number={2},
       pages={245\ndash 267},
         url={http://dx.doi.org/10.1007/s00205-004-0346-1},
      review={\MR{2118477}},
}

\bib{ChTo}{article}{
      author={Chen, {G.-Q.}},
      author={Torres, {M.}},
       title={On the structure of solutions of nonlinear hyperbolic systems of
  conservation laws},
        date={2011},
        ISSN={1534-0392},
     journal={Commun. Pure Appl. Anal.},
      volume={10},
      number={4},
       pages={1011\ndash 1036},
         url={http://dx.doi.org/10.3934/cpaa.2011.10.1011},
      review={\MR{2787432 (2012c:35263)}},
}

\bib{ChToZi}{article}{
      author={Chen, {G.-Q.}},
      author={Torres, {M.}},
      author={Ziemer, {W.P.}},
       title={Gauss-{G}reen theorem for weakly differentiable vector fields,
  sets of finite perimeter, and balance laws},
        date={2009},
        ISSN={0010-3640},
     journal={Comm. Pure Appl. Math.},
      volume={62},
      number={2},
       pages={242\ndash 304},
         url={http://dx.doi.org/10.1002/cpa.20262},
      review={\MR{2468610}},
}

\bib{ComiPayne}{article}{
      author={Comi, {G.E.}},
      author={Payne, {K.R.}},
       title={On locally essentially bounded divergence measure fields and sets
  of locally finite perimeter},
        date={2017},
     journal={Adv. Calc. Var.},
        note={DOI 10.1515/acv-2017-0001},
}

\bib{CoTo}{article}{
      author={Comi, {G.E.}},
      author={Torres, {M.}},
       title={One--sided approximation of sets of finite perimeter},
        date={2017},
     journal={Atti Accad. Naz. Lincei Rend. Lincei Mat. Appl.},
      volume={28},
      number={1},
       pages={181\ndash 190},
}

\bib{CDC2}{article}{
      author={Crasta, {G.}},
      author={De~Cicco, {V.}},
       title={On the chain rule formulas for divergences and applications to
  conservation laws},
        date={2017},
        ISSN={0362-546X},
     journal={Nonlinear Anal.},
      volume={153},
       pages={275\ndash 293},
         url={https://doi.org/10.1016/j.na.2016.10.005},
      review={\MR{3614672}},
}

\bib{CDC4}{article}{
      author={Crasta, {G.}},
      author={De~Cicco, {V.}},
       title={An extension of the pairing theory between divergence-measure
  fields and {BV} functions},
        date={2018},
     journal={J. Funct. Anal.},
        note={DOI 10.1016/j.jfa.2018.06.007},
}

\bib{DM83}{article}{
      author={Dal~Maso, G.},
       title={On the integral representation of certain local functionals},
        date={1983},
        ISSN={0035-5038},
     journal={Ricerche Mat.},
      volume={32},
      number={1},
       pages={85\ndash 113},
      review={\MR{740203}},
}

\bib{DCFV2}{article}{
      author={De~Cicco, V.},
      author={Fusco, N.},
      author={Verde, A.},
       title={A chain rule formula in {$BV$} and application to lower
  semicontinuity},
        date={2007},
        ISSN={0944-2669},
     journal={Calc. Var. Partial Differential Equations},
      volume={28},
      number={4},
       pages={427\ndash 447},
         url={http://dx.doi.org/10.1007/s00526-006-0048-7},
      review={\MR{MR2293980 (2007j:49016)}},
}

\bib{DeGiOlPe}{misc}{
      author={De~Cicco, {V.}},
      author={Giachetti, {D.}},
      author={Oliva, {F.}},
      author={Petitta, {F.}},
       title={Dirichlet problems with 1-laplacian principal part and strong
  singularities},
        date={2017},
        note={Preprint},
}

\bib{DeGiSe}{article}{
      author={De~Cicco, {V.}},
      author={Giachetti, {D.}},
      author={Segura De~Le\'on, {S.}},
       title={Elliptic problems involving the 1--laplacian and a singular lower
  order term},
        date={2018},
     journal={J. Lond. Math. Soc.},
        note={DOI 10.1112/jlms.12172},
}

\bib{dcl}{article}{
      author={De~Cicco, V.},
      author={Leoni, G.},
       title={A chain rule in {$L^1({\rm div};\Omega)$} and its applications to
  lower semicontinuity},
        date={2004},
        ISSN={0944-2669},
     journal={Calc. Var. Partial Differential Equations},
      volume={19},
      number={1},
       pages={23\ndash 51},
         url={http://dx.doi.org/10.1007/s00526-003-0192-2},
      review={\MR{MR2027846 (2005c:49030)}},
}

\bib{DGMM}{article}{
      author={Degiovanni, {M.}},
      author={Marzocchi, {A.}},
      author={Musesti, {A.}},
       title={Cauchy fluxes associated with tensor fields having divergence
  measure},
        date={1999},
        ISSN={0003-9527},
     journal={Arch. Ration. Mech. Anal.},
      volume={147},
      number={3},
       pages={197\ndash 223},
         url={http://dx.doi.org/10.1007/s002050050149},
      review={\MR{1709215}},
}

\bib{FZ}{article}{
      author={Federer, {H.}},
      author={Ziemer, {W.P.}},
       title={The {L}ebesgue set of a function whose distribution derivatives
  are {$p$}-th power summable},
        date={1972/73},
        ISSN={0022-2518},
     journal={Indiana Univ. Math. J.},
      volume={22},
       pages={139\ndash 158},
         url={http://dx.doi.org/10.1512/iumj.1972.22.22013},
      review={\MR{0435361}},
}

\bib{GiMoPe}{article}{
      author={Giacomelli, {L.}},
      author={Moll, {S.}},
      author={Petitta, {F.}},
       title={Nonlinear diffusion in transparent media: {T}he resolvent
  equation},
        date={2018},
        ISSN={1864-8258},
     journal={Adv. Calc. Var.},
      volume={11},
      number={4},
       pages={405\ndash 432},
         url={https://doi.org/10.1515/acv-2017-0002},
      review={\MR{3859247}},
}

\bib{HI}{article}{
      author={Huisken, {G.}},
      author={Ilmanen, {T.}},
       title={The inverse mean curvature flow and the {R}iemannian {P}enrose
  inequality},
        date={2001},
        ISSN={0022-040X},
     journal={J. Differential Geom.},
      volume={59},
      number={3},
       pages={353\ndash 437},
         url={http://projecteuclid.org/euclid.jdg/1090349447},
      review={\MR{1916951}},
}

\bib{K1}{article}{
      author={Kawohl, {B.}},
       title={On a family of torsional creep problems},
        date={1990},
     journal={J. Reine Angew. Math.},
      volume={410},
       pages={1\ndash 22},
}

\bib{LaSe}{article}{
      author={Latorre, {M.}},
      author={Segura De~Le\'on, {S.}},
       title={Existence and comparison results for an elliptic equation
  involving the 1-{L}aplacian and {$L^1$}-data},
        date={2018},
        ISSN={1424-3199},
     journal={J. Evol. Equ.},
      volume={18},
      number={1},
       pages={1\ndash 28},
         url={https://doi.org/10.1007/s00028-017-0388-0},
      review={\MR{3772837}},
}

\bib{LeoSar2}{misc}{
      author={Leonardi, {G.P.}},
      author={Saracco, {G.}},
       title={Rigidity and trace properties of divergence-measure vector
  fields},
        date={2017},
        note={Preprint},
}

\bib{LeoSar}{article}{
      author={Leonardi, {G.P.}},
      author={Saracco, {G.}},
       title={The prescribed mean curvature equation in weakly regular
  domains},
        date={2018},
        ISSN={1021-9722},
     journal={NoDEA Nonlinear Differential Equations Appl.},
      volume={25},
      number={2},
       pages={Art. 9, 29},
         url={https://doi.org/10.1007/s00030-018-0500-3},
      review={\MR{3767675}},
}

\bib{MaSe}{article}{
      author={Maz\'on, {J.M.}},
      author={Segura~de Le\'on, {S.}},
       title={The {D}irichlet problem for a singular elliptic equation arising
  in the level set formulation of the inverse mean curvature flow},
        date={2013},
        ISSN={1864-8258},
     journal={Adv. Calc. Var.},
      volume={6},
      number={2},
       pages={123\ndash 164},
         url={http://dx.doi.org/10.1515/acv-2011-0001},
      review={\MR{3043574}},
}

\bib{Pan1}{article}{
      author={Panov, E.~Yu.},
       title={Existence and strong pre-compactness properties for entropy
  solutions of a first-order quasilinear equation with discontinuous flux},
        date={2010},
        ISSN={0003-9527},
     journal={Arch. Ration. Mech. Anal.},
      volume={195},
      number={2},
       pages={643\ndash 673},
         url={http://dx.doi.org/10.1007/s00205-009-0217-x},
      review={\MR{2592291 (2011h:35039)}},
}

\bib{PhTo}{article}{
      author={Phuc, {N.C.}},
      author={Torres, {M.}},
       title={Characterizations of signed measures in the dual of bv and
  related isometric isomorphisms},
        date={2017},
     journal={Ann. Sc. Norm. Super. Pisa Cl. Sci. (5)},
      volume={XVII},
      number={1},
       pages={385\ndash 417},
}

\bib{SchSch}{article}{
      author={Scheven, {C.}},
      author={Schmidt, {T.}},
       title={B{V} supersolutions to equations of 1-{L}aplace and minimal
  surface type},
        date={2016},
        ISSN={0022-0396},
     journal={J. Differential Equations},
      volume={261},
      number={3},
       pages={1904\ndash 1932},
         url={http://dx.doi.org/10.1016/j.jde.2016.04.015},
      review={\MR{3501836}},
}

\bib{SchSch2}{misc}{
      author={Scheven, {C.}},
      author={Schmidt, {T.}},
       title={An {A}nzellotti type pairing for divergence-measure fields and a
  notion of weakly super-1-harmonic functions},
        date={2017},
        note={Preprint},
}

\bib{Schu}{article}{
      author={Schuricht, {F.}},
       title={A new mathematical foundation for contact interactions in
  continuum physics},
        date={2007},
        ISSN={0003-9527},
     journal={Arch. Ration. Mech. Anal.},
      volume={184},
      number={3},
       pages={495\ndash 551},
         url={http://dx.doi.org/10.1007/s00205-006-0032-6},
      review={\MR{2299760}},
}

\bib{Silh}{article}{
      author={Silhav\'y, M.},
       title={Divergence measure fields and {C}auchy's stress theorem},
        date={2005},
        ISSN={0041-8994},
     journal={Rend. Sem. Mat. Univ. Padova},
      volume={113},
       pages={15\ndash 45},
      review={\MR{2168979}},
}

\bib{vol}{article}{
      author={Vol{\cprime}pert, A.I.},
       title={Spaces {${\rm BV}$} and quasilinear equations},
        date={1967},
     journal={Mat. Sb. (N.S.)},
      volume={73 (115)},
       pages={255\ndash 302},
      review={\MR{MR0216338 (35 \#7172)}},
}

\bib{vol1}{book}{
      author={Vol{\cprime}pert, A.I.},
      author={Hudjaev, S.I.},
       title={Analysis in classes of discontinuous functions and equations of
  mathematical physics},
      series={Mechanics: Analysis},
   publisher={Martinus Nijhoff Publishers},
     address={Dordrecht},
        date={1985},
      volume={8},
        ISBN={90-247-3109-7},
      review={\MR{MR785938 (86i:00002)}},
}

\end{biblist}
\end{bibdiv}

\end{document}